\newtheorem{thm}{Theorem}[section]
\newtheorem*{thmm}{Theorem}
\newtheorem*{corr}{Corollary}
\newtheorem{cor}[thm]{Corollary}
\theoremstyle{definition}
\newtheorem*{df*}{Definition}
\theoremstyle{remark}
\newtheorem*{rem*}{Remark}
\numberwithin{equation}{section}
\newtheorem{theorem}{Theorem}
\newtheorem{problem}{Problem}
\newtheorem{lemma}{Lemma}
\newtheorem{remark}{Remark}
\newcommand{\La}{\langle}
\newcommand{\Ra}{\rangle}
\newcommand{\B}{\mathbb{B}}
\newcommand{\R}{\mathbb{R}}
\newcommand{\E}{\mathbb{E}}
\newcommand{\tr}{{ \mathrm{tr}}}
\newcommand{\Hess}{{ \,\mathrm{Hess}\,}}
\newcommand{\Ker}{{ \mathrm{ker}\,}}
\newcommand{\eps}{\varepsilon}
\newcommand{\s}{\mathbf}
\begin{document}

\title[The hill property]{Bellman partial differential equation and the hill property for classical isoperimetric problems}

\author{Paata Ivanisvili}
\thanks{PI is partially supported by the Hausdorff Institute for Mathematics, Bonn, Germany}
\address{Department of Mathematics,  Michigan State University, East
Lansing, MI 48824, USA}
\email{ivanishvili.paata@gmail.com}

\author{Alexander Volberg}
\thanks{AV is partially supported by the NSF grant DMS-1265549 and by the Hausdorff Institute for Mathematics, Bonn, Germany}
\address{Department of Mathematics,  Michigan State University, East
Lansing, MI 48824, USA}
\email{volberg@math.msu.edu}

\makeatletter
\@namedef{subjclassname@2010}{
  \textup{2010} Mathematics Subject Classification}
\makeatother

\subjclass[2010]{42B20, 42B35, 47A30}



%
%

\keywords{Bellman function, Brascamp--Lieb inequality,  isoperimetric inequalities, Prekopa--Leindler, Brunn--Minkowski, Ehrhard inequalities}

  \begin{abstract}
The goal of this note is to have a systematic approach to generating isoperimetric inequalities from two concrete type of PDEs. We call these PDEs Bellman type because a totally analogous equations happen to rule many sharp estimates for singular integrals in harmonic analysis, and such estimates were obtained with the use of Hamilton--Jacobi--Bellman PDE.  We show how classical inequalities of Brascamp--Lieb, Prekopa--Leindler, Ehrhard are particular case of this scheme, which allows us to augment the stock of such inequalities. We approach the isoperimetric inequalities as a maximum (minimum) principle for special types of functions. These functions are compositions of ``Bellman function" and an appropriate flow built on test functions. Then the existence of maximum (minimum) principle for such compositions can be reduced to the  requirement that  Bellman function satisfies a concrete class of  nonlinear PDE (written down below). We are left to solve this nonlinear PDE (sometimes a possible task) to enjoy isoperimetric inequalities. The nonlinear PDE that we will describe in this article can be reduced sometimes to solving Laplacian eigenvalue problem,
$\bar{\partial}$-equation of certain type  or just the linear heat equation.
\end{abstract}

\date{}
\maketitle

\setcounter{equation}{0}
\setcounter{theorem}{0}

\section{Introduction: what kind of Bellman PDE we consider here.}
\label{Bell}

The  papers of Ledoux \cite{Ledoux2014}, Barthe \cite{barthe1} and our earlier paper \cite{IvVo} served us as a guide and inspiration for the present article.

In what follows the letter $B$ always stands for a function of $n$ real variables given in some domain $\Omega\subset \R^n$ and satisfying two different but related PDEs. We will describe now these PDEs, they will depend on the choice of matrix $A=(a_{1},\ldots, a_{n})$ of size $k\times n$, where $k\le n$,  $a_m\in \R^k$ is $m$-th column vector  of $A$. Both types of PDEs we are interested in here will also depend on a given symmetric real matrix $C$ of size $k\times k$. Practically always this $C$ will be assumed to be positive: $C>0$ unless we say otherwise, but in fact there are situations where one does not need even nonnegativity, only symmetry would suffice. In this article we assume however $C>0$, but the reader may consult \cite{IvVo}, where one considers arbitrary symmetric $C$'s.

For two matrices $M_1, M_2$ of the same size $M_1\bullet M_2$ denotes the Schur product of them, that is entrywise product. Further $a\cdot b$, or $\langle a, b\rangle$ denotes scalar product of vectors in $a,b \in \mathbb{R}^{m}$. 

We will also need several semigroups. If $C\in M_{k\times k}$ and positive, then operator 
$$
L=L_C:= \sum_{i, j=1}^k c_{ij} \frac{\partial^2}{\partial x_i\partial x_j}
$$ is a negative generator of the semigroup
$$
P_t^C = e^{tL_C}\,.
$$
For a nice function $f(x)$ on $\R^k$ the solution of ``modified heat equation"
$$
\frac{\partial u(x, t)}{\partial t} = (L_C u)(x, t),\,\, u(x, 0)=f(x)
$$
will be denoted by $e^{tL_C} f$ and it is quite easy to see how to construct such a solution.
Consider $C=I_k$ (identity in $\R^k$) and the usual laplacian $\Delta=L_{I_k}$. Consider the solution 
$v(x, t)$ of the usual heat equation:
$$
\frac{\partial v(x, t)}{\partial t} = (\Delta v)(x, t),\,\, v(x, 0)=f(C^{1/2}x)\,.
$$
Then put $u(x, t)= v(C^{-1/2} x, t)$. It solves a modified heat equation. In fact,
the Hessian (in variables $x$) of $v(C^{-1/2}x)$ is 
$C^{-1/2} (\Hess v)(C^{-1/2}x) C^{-1/2}$, and 
\begin{align*}
u(x,t)=\int_{\mathbb{R}^{k}}f(y)p^{C}_{t}(x,y)dy\quad \text{where}\quad p_{t}^{C}(x,y)=\frac{\det(C^{-1/2})}{(4\pi t)^{k/2}}e^{-\frac{|C^{-1/2}(x-y)|^{2}}{4t}}.
\end{align*}

Therefore, 
\begin{align*}
&(L_C u)(x,t)= \tr (C\Hess u)(x, t) = \tr (C\,C^{-1/2} (\Hess v)(C^{-1/2}x, t) C^{-1/2})=\\
&(\Delta v) (C^{-1/2}x, t) = \frac{\partial v}{\partial t}(C^{-1/2}x, t) = \frac{\partial u}{\partial t}(x, t)\,.
\end{align*}

\subsection{Special initial data}
\label{spid}
We will use very often $P_t^C f$ for $f$ having a special form
$$
f(x) := F(a\cdot x)\,,
$$
where $F$ is a function of one variable, $x\in \R^k$, and $a\in \R^k$ is a fixed vector. Then the flow $P_t^C f$  can be constructed as follows. Consider $1D$ heat flow (slightly modified):
$$
\frac{\partial U}{\partial t}(y, t) =\La C a, a\Ra U''(y, t)\,,\,\,\, y\in \R\,,\,\, U(y, 0) =F(y)\,.
$$
Then it turns out that 
\begin{equation}
\label{ax}
P_t^C f (x)= U(a\cdot x, t)=\int_{\mathbb{R}}F(a\cdot x + y\sqrt{2t\langle Ca,a\rangle})d\gamma_{1}(y), \,\,x\in \R^k\,.
\end{equation}
This is of course a simple direct calculation. Notice that if the matrix $C$ is symmetric and $\langle Ca,a\rangle >0$  but not necessarily nonnegative we can rewrite this as follows

\begin{equation}
\label{LC}
\left(\sum_{i, j=1}^k c_{ij} \frac{\partial^{2}}{\partial x_i \partial x_j} -\frac{\partial}{\partial t}\right)U(a\cdot x, t) = \left( \La Ca, a\Ra \frac{\partial^2}{\partial y^2} -\frac{\partial}{\partial t}\right) U (a\cdot x, t)=0\,.
\end{equation}

So in order to construct the flow $P_{t}^{C}f(x)$ for this special initial data $f(x)=F(a\cdot x)$ we do not need $C$ to be positive. We only need symmetric $C$ such that $\langle Ca,a\rangle >0$.

 One more nice property of the special initial data is that 
\begin{align*}
\nabla P_{t}^{C}f(x)=a U'(a\cdot x,t)
\end{align*}
where $U'(a\cdot x, t)= \frac{\partial }{\partial y} U(y,t)|_{y=a\cdot x}$.

For simplicity we work with {\em rank-1 case}. Here {\em rank-1 case} means that we consider initial datas of the form $F(a\cdot x)$, and  $a\cdot x$ corresponds to rank-1 linear operator. One can consider {\em general rank case} i.e., initial data of the form $f(x)=F(xA)$ for some $k\times m$ matrix $A$ and $x \in \mathbb{R}^{k}$. For more details we refer the reader to Section~\ref{grank} where we do consider the  {\em general rank case}.

\subsection{The first type of Bellman PDE: modified concavity property}
\label{I}

Recall that $A, C$, $C>0$ are fixed matrices of size $k\times n$ and $k\times k$ correspondingly, and $B$ is a certain (smooth) function given in $\Omega\subset \R^n$. Here is our first PDE, which can be called 
``modified concavity". We also  assume, unless it is said otherwise,
 that $k\times n$ matrix $A$ has full rank $k$.
\begin{equation}
\label{MC}
\begin{cases}
A^* CA\bullet \Hess  B (x)\le 0\,,\,\, \forall x\in \Omega\\
\det (A^* CA\bullet\Hess B)(x) =0\,,\,\, \forall x\in \Omega\,.
\end{cases}
\end{equation}
Notice that the first line above is a partial differential inequality (not equation): it is a negative  definiteness of a modified Hessian. But we wish to consider the whole system \eqref{MC}, for brevity we call it our first Bellman PDE.

\bigskip

We will show below how such $B$'s provide us with important occurrences of isoperimetric inequalities such as Borell's Gaussian noise stability equation, hypercontractivity of Ornstein--Uhlenbeck semigroup, Brascamp--Lieb Gaussian inequalities.
\bigskip

Condition (\ref{MC}) (the first inequality) implicitly appears in \cite{CLM} for the concrete function $B(x_{1},x_{2},\ldots, x_{n})=x_{1}^{1/p_{1}}\cdots x_{n}^{1/p_{n}}$ and some spacial matrix $C$. See also~\cite{Paouris1}. For further details we refer the reader to \cite{IvVo}. 
\bigskip

But there are instances of very important isoperimetric inequalities, for which the second type of Bellman PDE should be used. It is \eqref{MMC} and it gives a bigger amount of Bellman function $B'$s.

\subsection{The second type of Bellman PDE: modified concavity property}
\label{II}

Along with fixed matrices $A, C$ as above, we need the notation $D$. It is an $n\times n$ non-constant diagonal matrix, where on place $(m, m), m=1, \dots, n,$ we have $B_m:=\frac{\partial B}{\partial x_m}$.
We also assume always that $k\times n$ matrix $A$ has full rank $k$. Here is the second type of Bellman equation, which we assume to hold $\forall x\in \Omega$:
\begin{equation}
\label{MMC}
\begin{cases}
(I-  (AD)^*(AD^2A^*)^{-1}AD)  (A^* CA\bullet \Hess B)(I-  (AD)^*(AD^2A^*)^{-1}AD)  (x)\le 0\,,\\
\det_{n-k}(I-  (AD)^*(AD^2A^*)^{-1}AD)  (A^* CA\bullet \Hess B)(I-  (AD)^*(AD^2A^*)^{-1}AD) (x) =0\,.
\end{cases}
\end{equation}

Here $\det_s$ mean $s\times s$ minors of corresponding matrix-function.  Here instead of $C>0$ we only require $C\geq 0$ and $\langle Ca_{j},a_{j}\rangle >0$ for all $j=1,\ldots, n$.

The expression $(I-  (AD)^*(AD^2A^*)^{-1}AD) $ seems to be complicated, and also it tacitly assumes the invertibility of $k\times k$ matrix $AD^2A^*$. In fact, this expression is just precisely the orthogonal projection on the subspace $K(x), x\in \Omega,$ where $K(x) = \Ker A D(x)$.

Therefore, we can rewrite \eqref{MMC} as follows:
\begin{equation}
\label{MMCKer}
\begin{cases}
P_{K(x)} (A^* CA\bullet \Hess  B)P_{K(x)}  (x)\le 0\,,\,\, \forall x\in \Omega\\
\det_{n-k} P_{K(x)} (A^* CA\bullet \Hess  B)P_{K(x)} =0\,,\,\, \forall x\in \Omega\,.
\end{cases}
\end{equation}

\begin{remark}
Hence, 1) we should not care too  much about the invertibility of $AD^2A^*$, if it is not invertible, we just understand \eqref{MMC} as   \eqref{MMCKer}; 2) if all entries of $\nabla B(x)$ are non-zero for all $x\in \Omega$ (which is very often the case in applications to isoperimetric inequalities) then $AD^2A^*$ is always invertible (this is just Binet--Cauchy formula and our assumption of full rank of $A$).
\end{remark}

\begin{remark}
Assume $AD$ has full rank. If $k=n$ then our condition \eqref{MMC} becomes trivial and is always true. 
If $k=n-1$ then $P_{\mathrm{ker}}$ has rank $1$ and therefore (\ref{MMC}) holds if and only if 
$$
\mathrm{Tr}(P_{\mathrm{ker} \;AD}\left(A^{*}CA\bullet \mathrm{Hess}\, B\right)P_{\mathrm{ker} \;AD})=
$$
\begin{equation}
\label{knminus1}
\sum_{j} B_{jj}\langle Ca_{j}, a_{j}\rangle - \sum_{i,j}B_{ij}B_{i}B_{j} \langle Ca_{i}, a_{j}\rangle \langle (AD^{2}A^{*})^{-1}a_{i},a_{j}\rangle = 0.
\end{equation} 

 In particular if $B(x_{1},\ldots, x_{n})=x_{n}-H(x_{1},\ldots, x_{n-1})$ where $H$ is a smooth function of $n-1$ variables such that $\partial_{j} H \neq 0$ for all $j=1,\ldots, n-1$, and if 
$a_{1}=(1,0,\ldots, 0)^{T}, \ldots, a_{n-1}=(0,\ldots, 0,1)^{T}$ ($T$ stands for transposition of rows to columns) is a  standard orthonormal basis  in $\mathbb{R}^{k}$, then (\ref{knminus1}) simplifies to 
\begin{align}\label{simetria}
\sum_{i,j=1}^{n-1}\frac{\partial_{ij} H}{\partial_{i} H  \partial_{j} H}a_{ni}a_{nj}c_{ij}= 0.
\end{align}
This is a direct computations, see the next section for details.
\end{remark}

\bigskip

The second type of Bellman PDE (in its form (\ref{simetria})) is ruling such isoperimetric inequalities as Prekopa--Leindler inequality (and thus Brunn--Minkowski inequality) and Ehrhard's inequality (see Section~\ref{Er}).

The reader should be warned that even though \eqref{MC} and \eqref{MMC} look ``almost" the same, they are in fact very different. For example, specially chosen functions $B$ that will prove for us Prekopa--Leindler inequality and Ehrhard's inequality will absolutely not satisfy \eqref{MC} whatever is the choice of $C>0$, but they will satisfy \eqref{MMC} for suitable $C\geq0$.

Below we start with two examples of using modified concavity Bellman PDE \eqref{MC}. Its use will be illustrated by Borell's Gaussian noise stability inequality. We follow closely \cite{Ledoux2014} and \cite{Neeman}. We will also illustrate the use  of \eqref{MC} by ultracontractivity property of Ornstein--Uhlenbeck semigroup.

Then we come to PDEs ruling Prekopa--Leindler and Ehrhard's inequalities. These will be of type \eqref{MMC}.

In Section~\ref{review} briefly describes classical isoperimetric inequalities which we have proved in the current paper. 

\section{Borell's Gaussian noise stability and \eqref{MC} PDE}
\label{BGNS}

Here we follow closely the paper of Ledoux~\cite{Ledoux2014} and our previous paper \cite{IvVo} in what concerns the use of modified concavity PDE \eqref{MC}. We give descriptions in one dimensional case (rank-1 case), and for arbitrary dimension (general rank) we refer the reader to Section~\ref{grank}.

Let $X$ and $Y$ be to standard real Gaussian variable but they are not independent: $\E XY =p, 0<p<1$.

One fixes two numbers $u, v\in [0,1]$ and one looks through all the   sets $A, B$ in $\R$  such that
$$
\gamma_1(A)= u,\,\, \gamma_1(B)=v,
$$
where $\gamma_s$ is a standard Gaussian measure in $\R^s$. One wishes to solve the following isoperimetric problem: maximize (over $A, B$) the probability
$$
\mathcal{P}( X\in A, Y\in B)\,.
$$

First we reformulate the problem in an obvious way, and then we apply \eqref{MC} approach to solve it.

First remark is that we can consider independent standard Gaussians $X, Y$, but now we look at the pair $X, pX+\sqrt{1-p^2} Y$ and we maximize over $A, B \subset \R^1$
$$
\mathcal{P}( X\in A, pX+\sqrt{1-p^2} Y\in B)\,.
$$
It is reasonable to think, and this will be proved, that this supremum--let us call it $b(u,v)$-- coincides with the following supremum
\begin{align*}
&\B^{\sup}(u, v) =\\
& \sup\left\{\int f(x)g(px+\sqrt{1-p^2} y) d\gamma_2(x, y):\, \int fd\gamma_1= u, \int gd\gamma_1 =v, 0\le f\le 1, 0\le g\le 1\right\}\,.
\end{align*}
Of course 
\begin{equation}
\label{simple}
b(u, v)\le \B^{\sup}(u, v)\,.
\end{equation}

Our goal is to show how using \eqref{MC} we can find the formula for $b(u, v)$ and to prove that $b(u,v)=\B^{\sup}(u,v)$.

\begin{theorem}
\label{gMC}
A locally bounded  function $B(u_1,\dots, u_n)$ satisfies inequality of \eqref{MC}  with matrix $A$ of size $k\times n$ with columns $a_1,\dots a_n$ and $C=I_k$ if and only if 
$$
\int B(u_1(a_1\cdot x), \dots, u_n(a_n\cdot x))d\gamma_k(x) \le B\left(\int u_1 (x) d\gamma_1,\dots, \int u_n(x) d\gamma_{1}\right)\,,
$$
for all smooth bounded functions $u_{j}$, 
where $\|a_i\|=1, i=1, \dots, n$.
\end{theorem}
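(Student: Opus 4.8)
The plan is to prove the two implications by the two standard devices for dimension‑free Jensen‑type inequalities: a semigroup interpolation for sufficiency, and a second‑order Taylor expansion at a single point for necessity. Throughout I would take $B\in C^2(\Omega)$ (needed for \eqref{MC} to be meaningful), using the local boundedness of $B$ and the boundedness of the $u_j$ to keep every integral below absolutely convergent; I also tacitly assume $\Omega$ is such that $(u_1(a_1\cdot x),\dots,u_n(a_n\cdot x))\in\Omega$ for the admissible $u_j$ (e.g. $\Omega$ a product of intervals, which the flow below preserves).

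For sufficiency, let $(T_\rho)_{\rho\in[0,1]}$ be the $1$D Ornstein--Uhlenbeck semigroup, $(T_\rho g)(y)=\int_{\mathbb{R}}g(\rho y+\sqrt{1-\rho^2}\,z)\,d\gamma_1(z)$, so $T_1 g=g$ and $T_0 g=\int g\,d\gamma_1$. Since $\|a_j\|=1$, for $x\sim\gamma_k$ the scalar $a_j\cdot x$ is a standard Gaussian, and I would set
$$
H(\rho):=\int_{\mathbb{R}^k} B\big((T_\rho u_1)(a_1\cdot x),\dots,(T_\rho u_n)(a_n\cdot x)\big)\,d\gamma_k(x),\qquad\rho\in[0,1],
$$
so that $H(1)$ is the left side of the asserted inequality and $H(0)=B\big(\int u_1\,d\gamma_1,\dots,\int u_n\,d\gamma_1\big)$ is the right side. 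It then suffices to show $H$ is nonincreasing. Writing $\rho=e^{-t}$ and $v_j(y,t)=(T_{e^{-t}}u_j)(y)$, each $v_j$ solves the $1$D OU equation $\partial_t v_j=\partial_{yy}v_j-y\,\partial_y v_j$, and I would differentiate $t\mapsto H(e^{-t})$ and then apply Gaussian integration by parts in $x$, in the form $\int_{\mathbb{R}^k}(\Delta_x\Psi-x\cdot\nabla_x\Psi)\,d\gamma_k=0$ with $\Psi(x)=B(v_1(a_1\cdot x,t),\dots,v_n(a_n\cdot x,t))$. The first‑order terms cancel and one is left with
$$
\frac{d}{dt}H(e^{-t})=-\int_{\mathbb{R}^k}\big\langle (A^*A\bullet\Hess B)\,\mathbf{w},\,\mathbf{w}\big\rangle\,d\gamma_k,\qquad \mathbf{w}=\big(\partial_y v_1(a_1\cdot x,t),\dots,\partial_y v_n(a_n\cdot x,t)\big),
$$
where $A^*A\bullet\Hess B$ has entries $\langle a_i,a_j\rangle B_{ij}$, i.e. it is exactly $A^*CA\bullet\Hess B$ with $C=I_k$. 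The first line of \eqref{MC} makes the integrand nonpositive, so $t\mapsto H(e^{-t})$ is nondecreasing and $H$ is nonincreasing on $(0,1)$; continuity of $H$ at the endpoints (dominated convergence, using $T_\rho u_j\to\int u_j\,d\gamma_1$ uniformly on compacta) gives $H(1)\le H(0)$.

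For necessity, I would fix $x_0=(c_1,\dots,c_n)\in\Omega$ and $\xi=(\xi_1,\dots,\xi_n)\in\mathbb{R}^n$ and, for small $\eps>0$, take $u_j$ to be a smooth bounded truncation of the affine map $y\mapsto c_j+\eps\xi_j y$ (cut off far enough that the truncation perturbs every integral by $o(\eps^2)$ and the arguments stay in $\Omega$). Then $\int u_j\,d\gamma_1=c_j+o(\eps^2)$, while $\int(a_j\cdot x)\,d\gamma_k=0$ and $\int(a_i\cdot x)(a_j\cdot x)\,d\gamma_k=\langle a_i,a_j\rangle$, so a second‑order Taylor expansion of $B$ about $x_0$ inside the integral gives
$$
\int B\big(u_1(a_1\cdot x),\dots,u_n(a_n\cdot x)\big)\,d\gamma_k=B(x_0)+\frac{\eps^2}{2}\sum_{i,j}\langle a_i,a_j\rangle B_{ij}(x_0)\,\xi_i\xi_j+o(\eps^2),
$$
whereas the right side of the assumed inequality is $B(x_0)+o(\eps^2)$. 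Dividing by $\eps^2$ and letting $\eps\to0$ forces $\sum_{i,j}\langle a_i,a_j\rangle B_{ij}(x_0)\,\xi_i\xi_j\le0$; as $x_0,\xi$ are arbitrary, this is precisely the inequality $A^*CA\bullet\Hess B\le0$ of \eqref{MC} with $C=I_k$.

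The main obstacle is entirely on the analytic side of the sufficiency direction: justifying differentiation under the integral sign and the Gaussian integration‑by‑parts identity, and especially controlling the boundary behaviour as $\rho\to0$ (equivalently $t\to\infty$), where one must know that the OU flow drives each argument to its mean and that $B$ evaluated at those arguments converges — this is where the local boundedness of $B$ and the boundedness of the $u_j$ (which confine $T_\rho u_j$ to a fixed compact set) are genuinely needed. The algebraic core, by contrast, is the one‑line cancellation that identifies $\frac{d}{dt}H$ with the quadratic form built from $A^*CA\bullet\Hess B$, $C=I_k$.
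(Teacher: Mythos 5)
Your argument is correct, but it does not follow the paper's primary proof. For sufficiency the paper works with the heat semigroups $P_t=e^{\Delta_1 t}$, $\mathcal{P}_t=e^{\Delta_k t}$, forms $V(x,t)=B\bigl((\mathcal{P}_t\vec u)(x)\bigr)-(\mathcal{P}_tB(\vec u))(x)$, computes $(\Delta_k-\partial_t)V=\langle A^*A\bullet\Hess B\,\vec{u'},\vec{u'}\rangle\le 0$, and invokes the parabolic minimum principle to get the \emph{pointwise} comparison $\mathcal{P}_tB(\vec u)\le B(\mathcal{P}_t\vec u)$, of which the stated integral inequality is the special case $t=1/2$, $x=0$. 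Your Ornstein--Uhlenbeck interpolation $H(\rho)$ with Gaussian integration by parts is instead exactly the "energy" monotonicity $\E_g(t)\ge\E_g(0)$ that the paper records in the remark following its proof as "another proof of Theorem \ref{gMC}"; it is weaker in that it only yields the integrated inequality, but it avoids the minimum principle and makes the passage $t\to\infty$ painless because $\gamma_k$ is finite and $T_\rho u_j\to\int u_j\,d\gamma_1$ uniformly. For necessity the paper differentiates the (already upgraded, via shifts and dilations) pointwise inequality $V(x,t)\ge0$ at $t=0$ to recover $-\langle A^*A\bullet\Hess B\,\vec{u'},\vec{u'}\rangle\ge0$, whereas you test the integral inequality directly on truncated affine data $u_j(y)=c_j+\eps\xi_j y$ and Taylor-expand to order $\eps^2$; this is a self-contained and arguably more elementary route, since it bypasses the shift--dilation reduction, at the cost of the (routine) bookkeeping needed to make the truncation errors $o(\eps^2)$. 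Both directions of your argument are sound.
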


It is very easy to make a change of variables and to have this result for any $C>0$ and any vectors $a_i\neq 0$: 

\begin{cor}
\label{gMCcor}
Function $B(u_1,\dots, u_n)$ satisfies \eqref{MC} (first inequality) with matrix $A$ of size $k\times n$ with columns $a_1,\dots a_n$ and $C>0$ if and only if 
$$
\int B(u_1\langle C^{1/2}a_1,x\rangle, \dots, u_n\langle C^{1/2}a_n,x\rangle)d\gamma_k(x) \le
$$
$$
 B\left(\int u_{1}(x\sqrt{\langle Ca_{1},a_{1}\rangle}) d\gamma_1,\dots, \int u_n(x\sqrt{\langle Ca_{n},a_{n}\rangle}) d\gamma_1\right)\,.
$$
for all smooth bounded functions $u_{j}$. 
\end{cor}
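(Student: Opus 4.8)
The plan is to derive the corollary directly from Theorem~\ref{gMC} by a linear change of variables that reduces the matrix $C>0$ and arbitrary nonzero columns $a_i$ to the normalized case $C = I_k$ and $\|a_i\| = 1$. First I would observe that the inequality~\eqref{MC} (its first line, the negative semidefiniteness of $A^*CA \bullet \Hess B$) is the only condition referenced in the corollary, so it suffices to show this condition is preserved under the change of variables, and that the integral inequality transforms correctly. Set $\tilde a_i := C^{1/2} a_i / \|C^{1/2} a_i\|$, so that $\|\tilde a_i\| = 1$, and let $\tilde A$ be the $k \times n$ matrix with columns $\tilde a_i$. Write $\lambda_i := \|C^{1/2} a_i\|^2 = \langle C a_i, a_i\rangle > 0$, which is positive by the hypothesis $C > 0$ (and $a_i \neq 0$).

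Next I would check the algebraic identity relating the two modified Hessians. Since $a_i = \lambda_i^{1/2} C^{-1/2} \tilde a_i$, we get $\langle C a_i, a_j\rangle = \lambda_i^{1/2}\lambda_j^{1/2} \langle \tilde a_i, \tilde a_j\rangle$, and hence the $(i,j)$ entry of $A^*CA \bullet \Hess B$ equals $\lambda_i^{1/2}\lambda_j^{1/2} \langle \tilde a_i, \tilde a_j\rangle B_{ij}$, i.e.
\begin{equation*}
A^*CA \bullet \Hess B = \Lambda^{1/2}\big(\tilde A^* \tilde A \bullet \Hess B\big)\Lambda^{1/2},
\end{equation*}
where $\Lambda = \mathrm{diag}(\lambda_1,\dots,\lambda_n)$. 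Since $\Lambda^{1/2}$ is invertible, the left side is negative semidefinite if and only if $\tilde A^*\tilde A \bullet \Hess B \le 0$. This is exactly the hypothesis of Theorem~\ref{gMC} applied to $B$ with the normalized matrix $\tilde A$ and $C = I_k$ — except that $\Hess B$ is being evaluated at the rescaled argument; but that is irrelevant since the condition is required to hold on all of $\Omega$ (or, in the statement, one applies the theorem with the appropriate domain).

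Finally I would translate the integral inequality. Theorem~\ref{gMC} with matrix $\tilde A$ gives
\begin{equation*}
\int B\big(u_1(\tilde a_1 \cdot x),\dots,u_n(\tilde a_n\cdot x)\big)\,d\gamma_k(x) \le B\Big(\int u_1\,d\gamma_1,\dots,\int u_n\,d\gamma_1\Big)
\end{equation*}
for all smooth bounded $u_j$. Now I substitute: replace $u_j(t)$ by $w_j(t) := u_j(t/\sqrt{\lambda_j})$ — or equivalently, reparametrize the $u_j$ in the corollary's statement. Since $\tilde a_j \cdot x = \langle C^{1/2} a_j, x\rangle / \sqrt{\lambda_j}$, we have $w_j(\tilde a_j \cdot x) = u_j(\langle C^{1/2} a_j, x\rangle \cdot (1/\lambda_j)\cdot\sqrt{\lambda_j})$... so I would be careful here with the exact scaling: the point is that $u_j(\langle C^{1/2}a_j, x\rangle)$ is of the form $w_j(\tilde a_j \cdot x)$ with $w_j(s) = u_j(\sqrt{\lambda_j}\, s)$, and on the right side $\int w_j\, d\gamma_1 = \int u_j(\sqrt{\lambda_j}\, s)\,d\gamma_1(s) = \int u_j(s\sqrt{\langle C a_j, a_j\rangle})\,d\gamma_1(s)$, which matches the claimed right-hand side exactly. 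Conversely, the reverse implication follows because the change of variables is invertible: given the stated integral inequality for all $u_j$, specialize to recover the $\tilde A$-version and apply the "only if" direction of Theorem~\ref{gMC}. The main obstacle, such as it is, is purely bookkeeping: keeping the square-root scaling factors $\sqrt{\lambda_j}$ attached to the right arguments on both sides of the substitution, and confirming that $d\gamma_k$ is genuinely invariant under the relevant orthogonal part of the change of variables (it is, since $x \mapsto C^{-1/2}x$ followed by normalization only acts through the unit vectors $\tilde a_i$, and one checks the Gaussian integral identity~\eqref{ax} directly); there is no analytic difficulty beyond what Theorem~\ref{gMC} already supplies.
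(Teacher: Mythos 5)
Your proposal is correct and is exactly the change of variables the paper has in mind (the paper merely asserts ``it is very easy to make a change of variables'' and omits the details): normalizing $\tilde a_i = C^{1/2}a_i/\sqrt{\langle Ca_i,a_i\rangle}$, noting the congruence $A^*CA\bullet \Hess B = \Lambda^{1/2}(\tilde A^*\tilde A\bullet \Hess B)\Lambda^{1/2}$, and rescaling the test functions $w_j(s)=u_j(s\sqrt{\langle Ca_j,a_j\rangle})$ is precisely the right bookkeeping, and your self-correction on the scaling lands on the correct substitution.
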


Let us apply Theorem \ref{gMC} to $\vec{a_1}=(1,0)^T, \vec{a_2}=(p, \sqrt{1-p^2})^T$  and any smooth function $B=B(u,v)$,
given on a square $(u,v)\in Q:=[0,1]^2$ such that for matrix $A= \begin{bmatrix} 1, & p\\0, & \sqrt{1-p^2}\end{bmatrix}$ we have
\begin{equation}
\label{Bryant}
A^*A\bullet \Hess B = \begin{bmatrix} B_{uu}, & pB_{uv}\\pB_{uv}, & B_{uu}\end{bmatrix} \le 0
\end{equation}

Then for any such $B$ the theorem claims this inequality :
\begin{equation}
\label{fg1}
\int B(f(x), g(px +\sqrt{1-p^2} y) \, d\gamma_2(x, y) \le B\left(\! \int f(x) \, d\gamma_1(x), \int g(x)\, d\gamma_1(x)\right)\,.
\end{equation}

This is the same as

\begin{equation}
\label{fg2}
\! \! \int\! \! \! B(f(x), g(px +\sqrt{1-p^2} y) \, d\gamma_2(x, y)\! \!  \le \! \! B\left(\! \int\!  f(x) d\gamma_2(x,y), \! \! \int\!  g(px+\sqrt{1-p^2}y)d\gamma_2(x,y)\right)\,.
\end{equation}

Let us consider only $B$ on $Q$, which satisfies \eqref{Bryant} and also satisfies the following boundary conditions
\begin{equation}
\label{bd}
B(0, y)=0,  B(1, y)=y, y\in [0,1], B(x, 0)= 0, B(x,1)= x,  x\in [0,1]\,.
\end{equation}
Now we can choose $f=1_E, g= 1_F$, where $E, F$ are arbitrary, say, closed sets in $\R^1$. 

Then we get from \eqref{fg1}
\begin{equation}
\label{P1}
\mathcal{P}(X\in E, pX+\sqrt{1-p^2} Y \in F)=\gamma_2 ( \{(x,y): x\in E, px+\sqrt{1-p^2} y \in F\}) \le  B(\gamma_1(E), \gamma_1(F))\,,
\end{equation}
or 
\begin{equation}
\label{P2}
b(u,v)=\sup_{E, F\subset \R^1: \gamma_1(E) =u, \gamma_1(F)=v}\mathcal{P}(X\in E, pX+\sqrt{1-p^2} Y \in F) \le  \inf_{B: B\in \eqref{Bryant}, \eqref{bd}}B(u,v)\,.
\end{equation}

Let
$$
\Phi(a):=\frac{1}{\sqrt{2\pi}}\int_{-\infty}^a e^{-x^2/2} dx\,.
$$
Let us choose $E,F$ as rays, $E=(-\infty, a), F= (-\infty, b)$, where $a,b$ are chosen $\gamma_1(E) =u, \gamma_1(F)=v$, that is
\begin{equation}
\label{Phi}
a=\Phi^{-1}(u), b=\Phi^{-1}(v)\,.
\end{equation}
Then
$$
\mathcal{P}(X <a, pX+\sqrt{1-p^2} Y <b) \le \inf_{B: B\in \eqref{Bryant}, \eqref{bd}}B(\Phi(a),\Phi(b))\,.
$$

We want to show the opposite inequality (thus the equality). It has been made clear above that it is enough to check that
the function
$$
\B(u, v) := \mathcal{P}(X <\Phi^{-1}(u), pX+\sqrt{1-p^2} Y <\Phi^{-1}(v)) 
$$
satisfies \eqref{bd}  and also satisfies \eqref{Bryant}. Relation \eqref{bd} is obvious from the definition of $\B$, we are left to verify \eqref{Bryant}.

Moreover, we will see that $\B$ is ``the nest" function satisfying \eqref{Bryant} and boundary conditions \eqref{bd}, in the sense that the following  ``saturation" of  non-positivity of modified Hessian matrix  holds:
\begin{equation}
\label{Bryant_p_det}
\B_{uu}\B_{vv}-p^2 \B_{uv}^2=0\,, \forall  (u,v)\in Q\,.
\end{equation}

\begin{remark}
It is clear that to satisfy inequality \eqref{Bryant} it is sufficient to satisfy equation \eqref{Bryant_p_det}  and inequality $B_{uu}+B_{vv} \le 0$, or even just either $B_{uu}<0$ or $B_{vv}<0$.
\end{remark}

To this end we write $\B(u, v)$ in a different form. We change the variable in the integral:
$$
\frac1{2\pi} \int f(x) g (px +\sqrt{1-p^2} y) e^{-x^2/2} e^{-y^2/2} dxdy = \int f(x) g(z)  K_p(x, z)	e^{-x^2/2} e^{-z^2/2}dxdz
$$
and easily check that
$$
K_p(x, z)= \frac1{2\pi} e^{-\alpha x^2 -\alpha z^2 +\frac{2\alpha}{p} xz},\,\, \text{where}\,\,\alpha= \frac{p^2}{2(1-p^2)}\,.
$$
Plugging into the above formula $f=1_{(-\infty, a)}$, $g=1_{(-\infty, b)}$, $a=\Phi^{-1}(u), b=\Phi^{-1}(v)$, we get
\begin{align}
&2\pi \B(u, v) :=2\pi  \mathcal{P}(X <\Phi^{-1}(u), pX+\sqrt{1-p^2} Y <\Phi^{-1}(v)) =\\  &\!\!\ \int_{-\infty}^{\Phi^{-1}(u)}\!\!\!\int_{-\infty}^{\Phi^{-1}(v)}e^{-\alpha x^2 -\alpha z^2 -\frac{2\alpha}{p} xz}e^{-x^2/2} e^{-y^2/2}dxdz= \int_{-\infty}^{\Phi^{-1}(u)}\int_{-\infty}^{\Phi^{-1}(v)}\!\!\! K_p(x, z) d\gamma_2(x,z) \,.
\end{align}

Direct calculation gives (let $\varphi:=\Phi'$)
\begin{align*}
&\B(u,v)=\mathcal{P}\left( X \leq \Phi^{-1}(u),\; Y \leq  \frac{\Phi^{-1}(v)-pX}{\sqrt{1-p^{2}}}\right)=\\
&\int_{-\infty}^{\Phi^{-1}(u)}\int_{-\infty}^{\frac{\Phi^{-1}(v)-ps}{\sqrt{1-p^{2}}}}\varphi(t)\varphi(s)dtds=\int_{-\infty}^{\Phi^{-1}(v)}\int_{-\infty}^{\frac{\Phi^{-1}(u)-ps}{\sqrt{1-p^{2}}}}\varphi(t)\varphi(s)dtds;\\
&\B_{u}= \int_{-\infty}^{\frac{\Phi^{-1}(v)-p\Phi^{-1}(u)}{\sqrt{1-p^{2}}}}\varphi(t)dt;\\
&\B_{uu}=\varphi\left(\frac{\Phi^{-1}(v)-p\Phi^{-1}(u)}{\sqrt{1-p^{2}}} \right) \frac{-p}{(1-p^{2})^{1/2}\varphi(\Phi^{-1}(u))};\\
&\B_{uv}=\varphi\left(\frac{\Phi^{-1}(v)-p\Phi^{-1}(u)}{\sqrt{1-p^{2}}} \right) \frac{1}{(1-p^{2})^{1/2}\varphi(\Phi^{-1}(v))};\\
&\B_{v}=\int_{-\infty}^{\frac{\Phi^{-1}(u)-p\Phi^{-1}(v)}{\sqrt{1-p^{2}}}}\varphi(t)dt;\\
&\B_{vv}=\varphi\left(\frac{\Phi^{-1}(u)-p\Phi^{-1}(v)}{\sqrt{1-p^{2}}} \right) \frac{-p}{(1-p^{2})^{1/2}\varphi(\Phi^{-1}(v))};
 \end{align*}
 It is clear that $\B_{uu}, \B_{vv}\leq 0$ and 
 \begin{align*}
 \B_{uu}\B_{vv}-p^{2}\B_{uv}^{2}=0.
 \end{align*}

Hence, \eqref{Bryant} (and  also \eqref{Bryant_p_det} are satisfied (so we used the solution of Bellman PDE \eqref{MC} of the first type for our matrix $A=\begin{bmatrix} 1, & p\\0,& \sqrt{1-p^2}\end{bmatrix}$).
To prove 
$$
\inf_{B\in \eqref{Bryant}, \eqref{bd}} B(u, v) = \B(u, v)
$$
(that is the first description of $\B$) we used only boundary condition and inequality \eqref{Bryant}.  Notice that it is  also proved that
$$
\B(u,v) =\B^{\sup}(u,v).
$$
This is the second description of $\B$.

\bigskip

By Theorem~\ref{gMC} any smooth function $B$ satisfying for all $f,g$, $0\le f\le 1, 0\le g\le 1$, 
$$
\int B(f(x), g(px +\sqrt{1-p^2} y) \, d\gamma_2(x, y) \le B\left(\! \int f(x) \, d\gamma_1(x), \int g(x)\, d\gamma_1(x)\right)\,
$$
will also satisfy pointwise inequality \eqref{Bryant}.

\bigskip

This gives the third description of $\B$, it is the saturated  (namely, satisfying $\B_{uu} \B_{vv} -p^2\B_{uv}^2=0$) solution of \eqref{Bryant} with boundary condition \eqref{bd}. In other words, it is a solution of the first type Bellman equation \eqref{MC} with $A= \begin{bmatrix} 1, & p\\0,& \sqrt{1-p^2}\end{bmatrix}$, which satisfies boundary conditions \eqref{bd}.

The fourth description of $\B$ is of course its formula $\B(u,v)=\int_{-\infty}^{\Phi^{-1}(u)}\int_{-\infty}^{\Phi^{-1}(v)}\!\!\! K_p(x, z) d\gamma_2(x,z) =\mathcal{P}(X <\Phi^{-1}(u), pX+\sqrt{1-p^2} Y <\Phi^{-1}(v))$, which we know because this Gaussian extremal problem has been solved beforehand and its solution were known to be rays!

Finally,
we can write
$$
b= \B^{\sup}= \B= \B^{\inf}\,.
$$
Here $\B^{\inf}:=\inf_{B\in \eqref{Bryant}, \eqref{bd}} B(u, v)$. 
It is interesting to ask how one can find other functions $B$ solving \eqref{Bryant} and \eqref{Bryant_p_det} simultaneously. We will show how one can do this in Section \ref{BrSec}.

\subsection{Hypercontractivity of Ornstein--Uhlenbeck semigroup. Young's functions with property \eqref{Bryant_p_det}}
\label{hyper}

Let us consider again functions $B$ that give us
\begin{align*}
 \int_{\mathbb{R}^{2}}B(\varphi(x),\psi(px+\sqrt{1-p^2}\; y))d\gamma_{2} \leq B\left(\int_{\mathbb{R}^{1}}\varphi d\gamma_{1}, \int_{\mathbb{R}^{1}}\psi d\gamma_{1} \right).
 \end{align*}

For that we know it is enough to have \eqref{Bryant_p_det} and $B_{uu}, B_{vv}\le0$. Now let us try to choose $B$ in a very simple form
$$
B(u, v) = u^{1/a}v^{1/b}\,.
$$
It is easy to calulate that \eqref{Bryant_p_det} holds  if and only if $1\leq a, 1\leq b$ and 
\begin{align*}
(a-1)(b-1)-p^2 \geq 0.
\end{align*}
This means that if we denote $\varphi=f^{a}, \psi=g^{b}$ and choose $t$ from the relationship $p:=e^{-t}$, then we have inequality  involving Ornsten--Uhlenbeck semigroup $P_t$:
\begin{align}
\label{hyperineq}
\!\!\!\!\!\!\int_{\mathbb{R}^{n}}f \cdot P_{t} g d\gamma =\int_{\mathbb{R}^{2n}}f(x)g(e^{-t}x+\sqrt{1-e^{-2t}}\; y) d\gamma \leq 
\left(\int_{\mathbb{R}^{n}}f^{a}d\gamma \right)^{1/a}\left(\int_{\mathbb{R}^{n}}g^{b}d\gamma \right)^{1/b}.
\end{align}
We obtain hypercontractivity  for Ornstein--Uhlenbeck semigroup $P_{t}$:
\begin{cor}
\begin{align*}
\|P_{t}g\|_{L^{Q}(d\gamma)} \leq \|g\|_{L^{P}(d\gamma)},
\end{align*}
iff
\begin{align*}
 Q-1 \le e^{2t} (P-1)\,.
\end{align*}
\end{cor}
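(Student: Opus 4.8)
The plan is to derive the stated equivalence
$$
\|P_t g\|_{L^Q(d\gamma)} \le \|g\|_{L^P(d\gamma)} \quad \Longleftrightarrow \quad Q - 1 \le e^{2t}(P-1)
$$
from inequality \eqref{hyperineq}, which we already have for every pair of exponents $a, b \ge 1$ satisfying $(a-1)(b-1) \ge p^2$ with $p = e^{-t}$. First I would dualize \eqref{hyperineq}: the quantity $\|P_t g\|_{L^Q}$ equals $\sup\{\int f\cdot P_t g\, d\gamma : \|f\|_{L^{Q'}} \le 1\}$ where $Q'$ is the conjugate exponent of $Q$. So assuming the exponent condition, I would set $a = Q'$ and $b = P$ in \eqref{hyperineq} (after checking these are admissible, see below) to obtain
$$
\int f \cdot P_t g\, d\gamma \le \left(\int f^{Q'} d\gamma\right)^{1/Q'}\left(\int g^{P} d\gamma\right)^{1/P} = \|f\|_{L^{Q'}(d\gamma)}\, \|g\|_{L^P(d\gamma)},
$$
and taking the supremum over $f$ with $\|f\|_{L^{Q'}} \le 1$ yields $\|P_t g\|_{L^Q} \le \|g\|_{L^P}$ for nonnegative $g$; a standard splitting $g = g_+ - g_-$ together with positivity of $P_t$ extends it to all $g$. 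The admissibility check is the arithmetic identity that $(a-1)(b-1) \ge p^2 = e^{-2t}$ with $a = Q'$, $b = P$ is equivalent to $Q-1 \le e^{2t}(P-1)$: since $Q' - 1 = \frac{1}{Q-1}$, the condition $(Q'-1)(P-1) \ge e^{-2t}$ reads $\frac{P-1}{Q-1} \ge e^{-2t}$, i.e. $P - 1 \ge e^{-2t}(Q-1)$, i.e. $Q - 1 \le e^{2t}(P-1)$.

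For the converse direction, I would argue that if $\|P_t g\|_{L^Q} \le \|g\|_{L^P}$ holds for all $g$, then testing on an appropriate family of functions forces the exponent inequality. The cleanest test functions are exponentials $g = e^{\lambda x}$ on $\R^1$ (or $\R^n$), for which $P_t g$ can be computed explicitly from the Mehler kernel: $P_t(e^{\lambda x}) = \exp(\lambda e^{-t} x + \tfrac{\lambda^2}{2}(1 - e^{-2t}))$, and the Gaussian moment $\int e^{\mu x} d\gamma_1 = e^{\mu^2/2}$ gives closed forms for both norms. Plugging in and letting $\lambda \to \infty$ (or optimizing over $\lambda$) extracts precisely the inequality $Q - 1 \le e^{2t}(P-1)$ as a necessary condition; this is the classical Nelson computation.

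The main obstacle — really the only nontrivial point — is that \eqref{hyperineq} as stated is proved under $1 \le a$, $1 \le b$, so one must confirm that $a = Q' \ge 1$ and $b = P \ge 1$ are genuinely in range. Here $P \ge 1$ is automatic for an $L^P$ norm, and $Q \ge 1$ gives $Q' \ge 1$; one should also note that the exponent condition $Q-1 \le e^{2t}(P-1)$ with $t \ge 0$ forces $Q \ge P \ge 1$ when $P > 1$, and the degenerate cases ($P = 1$, or $Q = \infty$) can be handled by a limiting argument or checked directly. A secondary technical point is the reduction from nonnegative $g$ to general $g$, which uses only $|P_t g| \le P_t|g|$ and monotonicity of the $L^P$ norm — routine. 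I expect no genuine difficulty beyond bookkeeping, since the substance is entirely contained in \eqref{hyperineq} and the elementary algebraic manipulation of conjugate exponents.
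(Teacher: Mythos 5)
Your sufficiency argument is exactly the paper's: dualize \eqref{hyperineq} by taking the supremum over $f$ with $\|f\|_{L^{Q'}}\le 1$, set $a=Q'$, $b=P$, and observe that $(Q'-1)(P-1)\ge e^{-2t}$ is the same as $Q-1\le e^{2t}(P-1)$. The paper in fact only proves this direction despite stating ``iff''; your converse via the test functions $g=e^{\lambda x}$ (where the Mehler formula gives $\|P_tg\|_{L^Q}\le\|g\|_{L^P}$ iff $1-e^{-2t}+Qe^{-2t}\le P$, the factor $\lambda^2$ cancelling, so no limit in $\lambda$ is even needed) is a correct and standard way to supply the missing necessity.
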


In fact, 
taking supremum over $f\in L^{a}(d\gamma)$ in \eqref{hyperineq} and setting
$Q=\frac{a}{a-1}$, $P=b$ we get the inequality of the Corollary  under the condition $(a-1)(b-1)-e^{-2t}\geq 0$, which can be rewritten in terms of $P,Q \geq 1$ as $\frac{P-1}{Q-1} -e^{-2t}\ge0$.

\subsection{The proof of Theorem \ref{gMC}}
\label{proof_gMC}

\begin{proof}
Let us consider semigroups $P_t :=e^{\Delta_1 t}$,  $\mathcal{P}_t:=e^{\Delta_k t}$, where $\Delta_k$ is Laplacian in $\R^k$. We already observed the following simple commutation relations: if $a, v$ are vectors in $\R^k$ and $\|a\|=1$ then
$$
(P_t F) (a\cdot v) = (\mathcal{P}_t f)(v),\,\,\text{where}\,\, f(w):= F(a\cdot w), w\in \R^k\,.
$$
The claim of Theorem \ref{gMC} can be then rewritten as follows
$$
(\mathcal{P}_{1/2} B(B(u_1(a_1\cdot w), \dots, u_1(a_1\cdot w))) (0) \le  B((\mathcal{P}_{1/2}u_1(a_1\cdot w))(0), \dots (\mathcal{P}_{1/2}u_n(a_n\cdot w))(0)\,,
$$
or for shortness
just the following inequality with $t=1/2, v=0$:
\begin{equation}
\label{ptpt}
\mathcal{P}_t B (\vec{u})(v) \le  B ((\mathcal{P}_t\vec{u}))(v) )
\end{equation}
Here the vector function $\vec{u}$ has a special form, 
$$
\vec{u}(w):= (u_1(a_1\cdot w), \dots, u_n(a_n\cdot w)), w\in \R^k\,.
$$
We will need also (we assume that $u_1, \dots, u_n$ are smooth and bounded)
$$
\vec{u'}(w):= (u_1'(a_1\cdot w), \dots, u_n'(a_n\cdot w)), w\in \R^k\,.
$$

Notice that if inequality \eqref{ptpt} is satisfied  for particular $t, v$, say $t=1/2$, $v=0\in \R^k$, but for all functions $u_1, \dots, u_n$, then it must be automatically satisfied for all $t>0, v\in \R^k$. Indeed, test the inequality on  the shifts and dilations of $u_{j}$, namely $\tilde{u}_{j}(y)=u_{j}(a_{j}\cdot v + y\sqrt{2t})$, and use (\ref{ax}).

 So just as well we need to prove 
\begin{equation}
\label{BPt}
\mathcal{P}_t B (\vec{u})(x) \le  B ((\mathcal{P}_t\vec{u})(x) )
\end{equation}
 for all positive $t$ and all $x\in \R^k$.

To prove \eqref{ptpt} consider the function in $\R_+^{k+1}$:
$$
V(x, t):= B ((\mathcal{P}_t\vec{u})(x) ) - (\mathcal{P}_t B (\vec{u}))(x), t\ge 0, x\in \R_k
$$
and notice that a direct computation gives us the equality
$$
\big(\Delta_k-\frac{\partial}{\partial t}\big) V(x, t) = \big(\Delta_k-\frac{\partial}{\partial t}\big) B ((\mathcal{P}_t\vec{u}))(x) ) =
$$
\begin{equation}
\label{DeltaBof}
\La A^*A\bullet (\text{Hess} B)(\mathcal{P}_t\vec{u}))(x)) (\mathcal{P}_t\vec{u'}))(x), (\mathcal{P}_t\vec{u'}))(x)\Ra \le 0
\end{equation}
by the first part of our assumption \eqref{MC}.  Also  $V(x,0) =0$ obviously. Then by minimum principle (see, for example \cite{Fjohn}) we  get $V(x, t) \ge 0$ everywhere.

\bigskip

For the converse, we already noticed that inequality in Theorem~\ref{gMC} implies pointwise inequality \eqref{BPt}, that is $V(x, t)\ge 0$.  Now direct computation gives 
$$
0\leq \lim_{t\to 0} \frac{V(x,t)-V(x,0)}{t}=-\La A^*A\bullet ((\text{Hess} B)\vec{u}(x)) \vec{u'}(x), \vec{u'}(x)\Ra.
$$

Since $\vec{u}$ (and hence $\vec{u'}$) is arbitrary  Theorem \ref{gMC} is proved.

\end{proof}

\begin{remark}
Suppose $C$ is a $k\times k$ symmetric matrix and $C>0$. Then we could have consider the semigroup $\mathcal{P}_t^C:= e^{tL_C}$, where $L_C:= \sum_{i, j=1}^k c_{ij} \frac{\partial^{2}}{\partial x_i \partial x_j}$. Given that $A^*CA\bullet \text{Hess} B\le 0$ everywhere in $\Omega$ we would obtain that
the following analog of \eqref{BPt} also holds
\begin{equation}
\label{ptptC}
\mathcal{P}_t^C B (\vec{u})(x) \le  B ((\mathcal{P}_t^C\vec{u})(x) ),\,\, \forall t\ge 0, \forall x\in \R^k\,.
\end{equation}

It is interesting to remark that if we {\it do not assume} $C>0$ (or even $C\ge 0$),  and we only assume that $\langle Ca_{j},a_{j}\rangle >0$ for all $j$, then  certain shadow of this pointwise inequality still holds. It will be an integral inequality. Notice first that equality
$$
\big(L_C-\frac{\partial}{\partial t}\big) B (u_1(a_1\cdot x,t), \dots , u_n(a_n\cdot x,t) ) =
$$
\begin{equation}
\label{LCBof}
\La A^*CA\bullet (\Hess B)(\vec{u}(x,t)) \vec{u'}(x,t), \vec{u'}(x,t)\Ra
\end{equation}
of course does not require any positivity of $C$. It follows from \eqref{LC}: here each flow $u_{j}(y,t)$ is with different speed, namely $\langle Ca_{j},a_{j}\rangle\frac{\partial^{2}}{\partial y^{2}}u_{j}(y,t)=\frac{\partial}{\partial t}u_{j}(y,t)$.  Then we integrate this equality over $\R^k$. Then
\begin{equation}
\label{integral}
\frac{d}{d t}\int_{\R^k} B ( u_1(a_1\cdot x,t), \dots , u_n(a_n\cdot x,t) )  dx \ge 0\,.
\end{equation}
Here one used $\lim_{R\to\infty}\int_{x\in \R^k,: |x|\le R} L_C B (u_1(a_1\cdot x,t), \dots , u_n(a_n\cdot x,t) )  dx =0$, which can be seen by Stokes theorem under some mild assumptions on $B$ (see~\cite{IvVo}).  It is just an integration by parts and the fact that $u(y,t)$ goes to zero fast if $y$ goes to infinity and $u$ is a function with compact support. In particular, if we denote by $\E(t)$ the following ``energy" functional
$$
\E(t) := \int_{\R^k} B ( u_1(a_1\cdot x,t), \dots , u_n(a_n\cdot x,t) )dx \,,
$$
we obtain the integral inequality
\begin{equation}
\label{integralE}
\E(t)\ge \E(0) \,\, \forall t\ge 0\,.
\end{equation}

Of course this inequality immediately follows from the much stronger pointwise inequality  \eqref{ptptC} if $C>0$.  To see this just integrate \eqref{ptptC} with respect to Lebesgue measure $dx$ in $\R^k$.
\end{remark}

\begin{remark}
An interesting (and sometimes useful) observation is that we can think that $P_t, \mathcal{P}_t, \mathcal{P}_t^C$ are semigroups of Ornsein--Uhlenbeck type. We can think that all second order differential operators we used above have a drift (a first order part). Absolutely nothing changes and \eqref{ptptC} holds. The integral inequalities \eqref{integral}, \eqref{integralE} will also hold with one small change: the integration should be with respect to the Gaussian measure $d\gamma_k(x)$. Here is a Gaussian analog of \eqref{integralE}:
\begin{equation}
\label{integralEg}
\E_g(t)\ge \E_g(0) \,\, \forall t\ge 0\,,
\end{equation}
where 
$$
\E_g(t) := \int_{\R^k} B ((P_t u_1))(a_1\cdot x), \dots , (P_t u_n))(a_n\cdot x) )d\gamma_k(x) \,.
$$
Of course here $P_t$ is an Ornstein--Uhlenbeck semigroup. Actually it is now a great advantage. We want to make $t\to\infty$ in \eqref{integralE} and/or \eqref{integralEg}. It is not so easy to do that in  \eqref{integralE}  (but one can do this sometimes, see \cite{IvVo}), but in \eqref{integralEg} it is very easy to pass to the limit because measure $d\gamma_k$ is finite and because one has a uniform convergence of $(P_t u)(y)$ to $\int u d\gamma_1$ for Ornstein--Uhlenbeck semigroup $P_t$. Coming to the limit  $t\to\infty$ in \eqref{integralEg} we immediately obtain $E_g(\infty)\ge E_g(0)$ or
\begin{equation}
\label{integralEgg}
B\left(\int u_1 d\gamma_1,\dots, \int u_n d\gamma_1\right) \ge \int_{\R^k}B (u_1(a_1\cdot x), \dots , u_n(a_n\cdot x) )d\gamma_k(x)\,,
\end{equation}
which gives us another  proof of Theorem \ref{gMC}.

\end{remark}

\subsection{Solving $B_{uu}B_{vv} -p^2B_{uv}^2=0$.}
\label{BrSec}

The following question was asked in \cite{Ledoux2014} and \cite{IvVo}.
\begin{problem}
Describe all possible solutions of the partial differential system of equality and inequality
\begin{align*}
A^{*}A \bullet \mathrm{Hess}\, B \leq 0 \quad \text{and} \quad \det(A^{*}A\bullet \mathrm{Hess}\, B)=0.
\end{align*}
\end{problem} 

Let us consider the following particular. Let $B\in C^{2}$ be given in some rectangular domain.
 Let $n=k=2$ and take $A=(a_{1}, a_{2})$ where $a_{1}, a_{2}\neq 0$. Then we must have 
\begin{align*}
A^{*}A \bullet \mathrm{Hess}\, B = \left( {\begin{array}{cc}
             |a_{1}|^{2} B_{11} & a_{1}\cdot a_{2} \,B_{12}   \\
             a_{1}\cdot a_{2} \,B_{12}  & |a_{2}|^{2}B_{22}             
                \end{array} } \right) \leq 0\quad \text{and} \quad \det(A^{*}A\bullet \mathrm{Hess}\, B)=0.
\end{align*} 
 If $a_{1}\cdot a_{2}=0$ then $B$ has to be separate concave functions such that $B_{11}B_{22}=0$ and these are the all possible solutions. Therefore we assume that $a_{1}\cdot a_{2}\neq 0$. Then we see that $B$ must be separate concave function and moreover 
 \begin{align*}
 \frac{|a_{1}|^{2}|a_{2}|^{2}}{|a_{1}\cdot a_{2}|^{2}} B_{11}B_{22}- B_{12}^{2}=0.
 \end{align*}
 So in the case $n=k=2$ the problem reduces to the following one 
 \begin{problem}
 Let $|c| \in [1, \infty)$ and let $B\in C^{2}$ be given on some rectangular domain in $\mathbb{R}^{2}$. Characterize all possible separately concave functions $B$ such that 
 \begin{align}\label{modmong}
c^{2}   \,B_{11}B_{22}-B_{12}^{2}=0.
 \end{align}  
 \end{problem}

\bigskip

 The  case $|c|=1$ corresponds to the homogeneous Monge--Amp\`ere equation and, thus, to  developable surface and the characterization of these surfaces are mostly known. The possible references are Pogorelov~\cite{Po}, Vasyunin--Volberg~\cite{VaVo}, Ivanisvili et al \cite{iosvz1,iosvz2,iosvz3,iosvz4,iosvz5,iosvz6}.

\bigskip

For general $|c| >1$ we can give local characterization. Namely, we will show that the above equation can be reduced to the following one 
 \begin{align*}
 \frac{\partial f}{\partial \bar{z}}=\bar{f}
 \end{align*}
 for some appropriate $f$ (see below). 
 
 For separately concave $B(x,y)$ set $B_{xx}=-p^{2}, B_{yy}=-q^{2}$. Then equation (\ref{modmong}) implies that $B_{xy}=cpq$. We also have 
 \begin{align}
 &-2pp_{y}=cqp_{x}+cpq_{x}, \label{mo11}\\
 &-2qq_{x} = cqp_{y}+cpq_{y}.\label{mo12}
 \end{align}
 Further we assume that $p, q \neq 0$. Assume that locally the map $p,q : (x,y) \to \mathbb{R}^{2}$ is invertible, and let $(x,y)$ be its inverse map. Then 
 \begin{align*}
 \left( {\begin{array}{cc}
             p_{x} & p_{y}   \\
             q_{x}  & q_{y}             
                \end{array} } \right) = \left( {\begin{array}{cc}
             x_{p} & x_{q}   \\
             y_{p}  & y_{q}             
                \end{array} } \right)^{-1}=
                 \frac{1}{\det(\mathrm{Jacob} (x,y))}\cdot \left( {\begin{array}{cc}
             y_{q} & -x_{q}   \\
             -y_{p}  & x_{p}             
                \end{array} } \right).
 \end{align*}
 Therefore equations (\ref{mo11}) and (\ref{mo12}) take the following form 
 \begin{align*}
 &2p x_{q}=cqy_{q}-cpy_{p},\\
 &2qy_{p}=-cqx_{q}+cpx_{p}.
 \end{align*}
 This can be written as follows 
 \begin{align*}
 &2(px)_{q}=c(qy)_{q}-c(py)_{p},\\
 &2(qy)_{p}=-c(qx)_{q}+c(px)_{p}.
 \end{align*}
We set $\tilde{U}(p,q)=p x(p,q)$ and $\tilde{V}(p,q)=q y(p,q)$. Then we obtain 
\begin{align*}
2\tilde{U}_{q}=c\tilde{V}_{q}-c \left(\frac{\tilde{V} p}{q}\right)_{p},\\
2\tilde{V}_{p}=-c \left(\frac{\tilde{U} q}{p}\right)_{q} + c\tilde{U}_{p}.
\end{align*}
After the logarithmic substitution $\tilde U(p,q)=M(\ln p, \ln q)$ and $\tilde V(p,q)=N(\ln p, \ln q)$ we obtain the linear equation
\begin{align*}
&2 M_{2}=c(N_{2}-N-N_{1}),\\
&2N_{1}=c(-M-M_{2}+M_{1}).
\end{align*}

By setting $k=2/c \in (-2,2)$, this can be rewritten as follows 
\begin{align*}
 \left( {\begin{array}{c}
           N   \\
             M            
                \end{array} } \right)= \left( {\begin{array}{cc}
             -1 & 1   \\
             -k  & 0             
                \end{array} } \right)
                \left( {\begin{array}{c}
             N_{1}   \\
             N_{2}            
                \end{array} } \right)+
               \left( {\begin{array}{cc}
            0 & -k   \\
            1  & -1             
                \end{array} } \right)
                \left( {\begin{array}{c}
             M_{1}   \\
             M_{2}            
                \end{array} } \right).\\              
\end{align*}
We need the following technical lemma.

\begin{lemma}
If the vector function $\vec{N}(x,y) =(N,M): \Omega \subset \mathbb{R}^{2} \to \mathbb{R}^{2}$ satisfies the following  first order system of linear differential equations 

\begin{align*}
 \left( {\begin{array}{c}
           N   \\
             M            
                \end{array} } \right)=
P
                \left( {\begin{array}{c}
             N_{1}   \\
             N_{2}            
                \end{array} } \right)+
              Q
                \left( {\begin{array}{c}
             M_{1}   \\
             M_{2}            
                \end{array} } \right).\\  
\end{align*}
for some  invertible $2\times 2$ matrices $P,Q$ where
\begin{align}
\label{QP}
QP^{-1}= \left( {\begin{array}{cc}
         -2t & \delta^{2}       \\                        
            -1 & 0    \end{array} } \right)
\end{align} 
for some $t \in (-\delta,\delta), \delta >0$ then  after making change of variables 
$\vec{N}(\vec{x})=B\vec{U}(A\vec{x})$,  where
$$
\vec{U}= (U, V)\,,
$$
\begin{align*}
B = \left( {\begin{array}{cc}
         t & \sqrt{\delta^{2}-t^{2}}       \\                        
            1 & 0    \end{array} } \right)\quad \text{and} \,,
\end{align*}
\begin{equation}
\label{chinside}
 A^{T}=\frac{1}{2}P^{-1}\left( {\begin{array}{cc}
         -1 & -\frac{t}{\sqrt{\delta^{2}-t^{2}}}      \\                        
            0 & -\frac{1}{\sqrt{\delta^{2}-t^{2}}}    \end{array} } \right),
\end{equation}
we obtain 
\begin{align*}
\frac{\partial f}{\partial \bar{z}}  =\bar{f},
\end{align*} 
where $f = U+iV$. 
\end{lemma}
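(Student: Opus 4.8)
The whole statement is a change-of-variables identity in linear algebra; the matrices $A$ and $B$ are built precisely so that two coefficient matrices match. The first step is to record the target equation in real form. Writing $z=\xi+i\eta$ for the new complex variable, $\partial_{\bar z}=\tfrac12(\partial_\xi+i\partial_\eta)$, and splitting $\partial f/\partial\bar z=\bar f$ with $f=U+iV$ into real and imaginary parts, one sees that it is equivalent to the first order linear system
\[
\begin{pmatrix}U\\V\end{pmatrix}=\widetilde P\begin{pmatrix}U_\xi\\U_\eta\end{pmatrix}+\widetilde Q\begin{pmatrix}V_\xi\\V_\eta\end{pmatrix},\qquad
\widetilde P=\frac12\begin{pmatrix}1&0\\0&-1\end{pmatrix},\quad
\widetilde Q=-\frac12\begin{pmatrix}0&1\\1&0\end{pmatrix},
\]
which has exactly the shape of the hypothesis. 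So it suffices to show that $\vec U(\vec\xi):=B^{-1}\vec N(A^{-1}\vec\xi)$ solves this system whenever $\vec N$ solves the system of the hypothesis.

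Next I would push the hypothesis through the substitution using the chain rule. If $g(\vec x)=h(A\vec x)$ then $\nabla_{\vec x}g=A^{T}(\nabla h)(A\vec x)$, so from $\vec N(\vec x)=B\,\vec U(A\vec x)$ we get $\nabla N=A^{T}(B_{11}\nabla U+B_{12}\nabla V)$ and $\nabla M=A^{T}(B_{21}\nabla U+B_{22}\nabla V)$, where on the right $\nabla$ is taken in the new variables. Substituting into the hypothesis and multiplying on the left by $B^{-1}$ converts it into
\[
\begin{pmatrix}U\\V\end{pmatrix}=B^{-1}(B_{11}P+B_{21}Q)A^{T}\begin{pmatrix}U_\xi\\U_\eta\end{pmatrix}+B^{-1}(B_{12}P+B_{22}Q)A^{T}\begin{pmatrix}V_\xi\\V_\eta\end{pmatrix}.
\]
For the given $B$ one has $B_{11}=t$, $B_{12}=s:=\sqrt{\delta^{2}-t^{2}}>0$, $B_{21}=1$, $B_{22}=0$, so the two coefficient matrices are $B^{-1}(tP+Q)A^{T}$ and $s\,B^{-1}PA^{T}$.

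Then I would match coefficients. The requirement $s\,B^{-1}PA^{T}=\widetilde Q$ forces $A^{T}=\tfrac1s P^{-1}B\widetilde Q$, and a single $2\times2$ multiplication shows this is precisely \eqref{chinside}; since $P$ is invertible and $s>0$, the matrix $A$ is invertible and the change of variables is admissible. It remains to check $B^{-1}(tP+Q)A^{T}=\widetilde P$. Inserting the formula just found for $A^{T}$ and using $QP^{-1}=\left(\begin{smallmatrix}-2t&\delta^{2}\\-1&0\end{smallmatrix}\right)$ from \eqref{QP}, this collapses to the single conjugation identity
\[
B^{-1}\bigl(tI+QP^{-1}\bigr)B=s\,\widetilde P\widetilde Q^{-1}=s\begin{pmatrix}0&-1\\1&0\end{pmatrix}.
\]
The matrix $tI+QP^{-1}=\left(\begin{smallmatrix}-t&\delta^{2}\\-1&t\end{smallmatrix}\right)$ has trace $0$ and determinant $\delta^{2}-t^{2}=s^{2}$, hence is similar to $s$ times the standard rotation generator, and a direct multiplication confirms that $B$ realizes this similarity. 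This is the only place where the precise entries of $B$ are used.

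I do not expect a genuine obstacle: the lemma is pure bookkeeping. The two points that need care are the transpose convention in the chain rule (which is exactly why $A^{T}$, not $A$, appears in \eqref{chinside}), and the role of the additive shift: $QP^{-1}$ has eigenvalues of modulus $\delta$, whereas the shifted matrix $tI+QP^{-1}$ — the shift being present because $B_{11}=t$ — has trace zero and eigenvalues $\pm is$ of the correct modulus $s=\sqrt{\delta^{2}-t^{2}}$ to be conjugate to $s$ times the rotation generator. Once these are kept straight, the whole verification is the short chain of $2\times2$ computations sketched above.
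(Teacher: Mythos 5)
Your proposal is correct and follows essentially the same route as the paper: both arguments push the hypothesis through the chain rule, use one of the two resulting coefficient equations to determine $A^{T}$ (giving exactly \eqref{chinside}), and reduce the remaining one to a compatibility check on $QP^{-1}$ — your conjugation identity $B^{-1}(tI+QP^{-1})B=s\left(\begin{smallmatrix}0&-1\\1&0\end{smallmatrix}\right)$ is just a rearrangement of the paper's condition $QP^{-1}=B_2B_1^{-1}$. The verification of that identity for the given $B$ checks out, so the proof is complete.
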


\begin{proof}
Set $P=(P_{1},P_{2}),Q=(Q_{1},Q_{2})$  where $P_{i},Q_{j}$ are columns. 

\begin{align*}
 \left( {\begin{array}{c}
           N   \\
             M            
                \end{array} } \right)=N_{1}P_{1}+N_{2}P_{2}+M_{1}Q_{1}+M_{2}Q_{2}.  
                \end{align*}

Now let $N(x,y)=\tilde{N}(\alpha_{1} x +\alpha_{2} y, \beta_{1} x + \beta_{2} y)$ then 

\begin{align*}
&N_{1}=\alpha_{1} \tilde{N}_{1}+\beta_{1} \tilde{N}_{2};\\
&N_{2}=\alpha_{2} \tilde{N}_{1}+\beta_{2} \tilde{N}_{2};\\
&M_{1}=\alpha_{1} \tilde{M}_{1}+\beta_{1} \tilde{M}_{2};\\
&M_{2}=\alpha_{2} \tilde{M}_{1}+\beta_{2} \tilde{M}_{2}.
\end{align*}

So we obtain 
\begin{align*}
 \left( {\begin{array}{c}
           \tilde{N}   \\
             \tilde{M}            
                \end{array} } \right) = &(P_{1}\alpha_{1}+P_{2}\alpha_{2})\tilde{N}_{1}+(P_{1}\beta_{1}+P_{2}\beta_{2})\tilde{N}_{2}+\\
                &(Q_{1}\alpha_{1}+Q_{2}\alpha_{2})\tilde{M}_{1}+(Q_{1}\beta_{1}+Q_{2}\beta_{2})\tilde{M}_{2}.
\end{align*}
Finally we set 
\begin{align*}
\tilde{N}=a_{1}U+b_{1}V\\
\tilde{M} = a_{2}U+b_{2}V.
\end{align*}
and 
\begin{align*}
B=
\left( {\begin{array}{cc}
           a_{1} & b_{1}   \\
            a_{2} & b_{2}            
                \end{array} } \right).
                \end{align*}
Thus we obtain 
\begin{align*}
& \left( {\begin{array}{c}
           U   \\
             V            
                \end{array} } \right)=\\
&B^{-1}[a_{1}(P_{1}\alpha_{1}+P_{2}\alpha_{2})+a_{2}(Q_{1}\alpha_{1}+Q_{2}\alpha_{2})]U_{1}+\\
&B^{-1}[a_{1}(P_{1}\beta_{1}+P_{2}\beta_{2})+a_{2}(Q_{1}\beta_{1}+Q_{2}\beta_{2})]U_{2}+\\
&B^{-1}[b_{1}(P_{1}\alpha_{1}+P_{2}\alpha_{2})+b_{2}(Q_{1}\alpha_{1}+Q_{2}\alpha_{2})]V_{1}+\\
&B^{-1}[b_{1}(P_{1}\beta_{1}+P_{2}\beta_{2})+b_{2}(Q_{1}\beta_{1}+Q_{2}\beta_{2})]V_{2}.
\end{align*}

And we would like to see that 
\begin{align*}
 \left( {\begin{array}{c}
           U   \\
             V            
                \end{array} } \right)= \frac{1}{2}\left( {\begin{array}{cc}
             1 & 0   \\
             0  & -1             
                \end{array} } \right)
                \left( {\begin{array}{c}
             U_{1}   \\
             U_{2}            
                \end{array} } \right)+
               \frac{1}{2}\left( {\begin{array}{cc}
            0 & -1   \\
            -1  & 0             
                \end{array} } \right)
                \left( {\begin{array}{c}
             V_{1}   \\
             V_{2}            
                \end{array} } \right).\\              
\end{align*}
This can hold if and only if 
\begin{align*}
&(P\alpha, Q\alpha)=\frac{1}{2}B\left( {\begin{array}{cc}
             1 &0  \\
             0  &-1         
                \end{array} } \right)B^{-1}=\frac{1}{2}BI^{+}B^{-1};\\
&(P\beta, Q\beta)=   \frac{1}{2} B\left( {\begin{array}{cc}
             0 & -1  \\
             -1  &0         
                \end{array} } \right)B^{-1}=\frac{1}{2}BI^{-}B^{-1},             
\end{align*}
where 
\begin{align*}
\alpha=\left( {\begin{array}{c}
             \alpha_{1}  \\
             \alpha_{2}        
                \end{array} } \right); \quad \beta=\left( {\begin{array}{c}
             \beta_{1}  \\
             \beta_{2}        
                \end{array} } \right).            
\end{align*}
Let $e_{1}=(1,0), e_{2}=(0,1)$, and let us introduce the matrices
\begin{align*}
&B_{1}=\frac{1}{2}\left(BI^{+}B^{-1}e_{1}, BI^{-}B^{-1}e_{1} \right),\\
&B_{2}=\frac{1}{2}\left(BI^{+}B^{-1}e_{2}, BI^{-}B^{-1}e_{2} \right).
\end{align*}
Then the above conditions hold iff 
\begin{equation}
\label{systCORR}
P \cdot (\alpha, \beta)=(P\alpha, P\beta)=B_{1};\,\,
Q \cdot (\alpha, \beta)=(Q\alpha, Q\beta)=B_{2}.
\end{equation}

The system \eqref{systCORR} is overdetermined, it has two equations on one matrix $(\alpha, \beta)$. There is one compatibility condition: 
 $QP^{-1}=B_{2}B_{1}^{-1}$. 

It is easy to calculate $B_1, B_2$ for matrix $B$, which was given in the assumption of the lemma.  Then we can calculate $B_2 B_1^{-1}$ and automatically obtain that it is equal to 
\begin{align*}
 \left( {\begin{array}{cc}
         -2t & \delta^{2}       \\                        
            -1 & 0    \end{array} } \right)\,.
\end{align*} 
(Note that if $r$ and $s$ are corresponding rows of the matrix $B$  then 
\begin{align*}
B_{2}B_{1}^{-1}=\left( {\begin{array}{cc}
             -2\frac{r\cdot s}{|s|^{2}} & \frac{|r|^{2}}{|s|^{2}}  \\
             -1 &0        
                \end{array} } \right),    
\end{align*}
so the claim follows.)
This is precisely the form of $QP^{-1}$ from \eqref{QP}.
This means that the system of equations on matrix $(\alpha, \beta)$ is compatible, and so matrix $(\alpha, \beta)$ is well defined.

Set $(\alpha, \beta):= P^{-1}B_1$.  This is precisely the formula \eqref{chinside}. The lemma is proved. 
\end{proof}

\bigskip

In our case of $P, Q$'s, we have  $P=\left( {\begin{array}{cc}
             -1 & 1   \\
             -k  & 0             
                \end{array} } \right)$, $Q=
               \left( {\begin{array}{cc}
            0 & -k   \\
            1  & -1             
                \end{array} } \right)$, and
 
\begin{align*}
QP^{-1}=\left( {\begin{array}{cc}
             -k & 1  \\
             -1 &0        
                \end{array} } \right)=\left( {\begin{array}{cc}
             -\frac{2}{c} & 1  \\
             -1 &0        
                \end{array} } \right),
\end{align*}
therefore we can apply the lemma and we see that taking  $t =1/c \in (-1,1)$ and $\delta=1$  we have 
\begin{align*}
B=\left( {\begin{array}{cc}
             t & \sqrt{1-t^{2}}  \\
             1 &0        
                \end{array} } \right) \quad     A=      \left( {\begin{array}{cc}
             0 & -\frac{1}{2}  \\
              \frac{1}{4t\sqrt{1-t^{2}}}&-\frac{1}{4} \frac{2t^{2}-1}{t\sqrt{1-t^{2}}}       
                \end{array} } \right).
\end{align*}

This means that if we set 
\begin{align*}
\left( {\begin{array}{c}
             N(x,y)  \\
             M(x,y)        
                \end{array} } \right)=\left( {\begin{array}{cc}
             t & \sqrt{1-t^{2}}  \\
             1 &0        
                \end{array} } \right)\left( {\begin{array}{c}
             U\left(-\frac{y}{2},\frac{x-y(2t^{2}-1)}{4t\sqrt{1-t^{2}}}\right)  \\
             V\left(-\frac{y}{2},\frac{x-y(2t^{2}-1)}{4t\sqrt{1-t^{2}}}\right)     
                \end{array} } \right),
\end{align*}
where by setting $z=x+iy$ for the  function $f(z,\bar{z})=U(x,y) +iV(x,y)$  we have
 \begin{align*}
 \frac{\partial f}{\partial \bar{z}}  = \bar{f}.
 \end{align*}
It is known that all $C^{1}$ solutions of the above equation are real analytic  and they can be represented in terms of power series 
\begin{align*}
f(z) = \sum_{k=0}^{\infty}c_{k} J^{(k)}(z\bar{z})z^{k}+\bar{c}_{k}J^{(k+1)}(z\bar{z})\bar{z}^{k+1},
\end{align*}
where $J(r)$ is modified Bessel $I$-functions whose series representation is 
\begin{align*}
J(r)=\sum_{j=0}^{\infty}\frac{r^{j}}{(j!)^{2}}.
\end{align*}
 


\section{Bellman equation of the second type: PDE that rules the Prekopa--Leindler inequality and Ehrhard inequality}
\label{PrLe}

In the previous section we used the following  minimum principle.  If a smooth function $V(x, t), x\in \R^k, t\ge 0,$ satisfies the growth condition  $V(x,t)\geq -Me^{\lambda |x|^{2}}$ for some nonnegative constants $M, \lambda \geq 0$,  and it is a superharmonic function in this sense
\begin{equation}
\label{caloric}
(L_C -\frac{\partial}{\partial t}) V(x, t) =\left(\sum_{i, j=1}^k c_{ij} \frac{\partial^2}{\partial x_i \partial x_j}-\frac{\partial}{\partial t}\right) V(x,t) \le 0,\, \forall x\in \R^k, t>0
\end{equation}
 $C>0$ then it has minimum principle: $V(x, t)\ge \inf V(\cdot,0)$.

Remember that our $V$ was of the following type 
$$
V(x, t) := B((\mathcal{P}_t^C u_1(a_1\cdot .))(x),\dots, 
(\mathcal{P}_t^C u_n(a_n\cdot .))(x))-\mathcal{P}_{t}(B(\vec{u}))(x)\,.
$$

The requirement \eqref{caloric} transforms into $A^*CA\bullet \text{Hess} B\le 0$. For given $A$ there could be very limited amount of functions $B$ for which there exists a positive $C$ with this property. In fact, in \cite{IvVo} we proved that sometimes one can enumerate all such functions by the  list of Young's functions. These functions provide us with  Brascamp--Lieb inequality inequality (see~\cite{brascamp--lieb, barthe, BCCT, BCCT2, Barthe3, Barthe4})

\bigskip

 However, to have the minimum principle one does not need $V$ to satisfy \eqref{caloric} for all $x, t$. It is easy to see that  it is sufficient to satisfy \eqref{caloric} {\it only} at the points, where $V(x,t)$ ($t>0$ is fixed) has a local minimum in $x$. In particular, it is enough to have $C\geq 0$ such that for any $(x_0, t_0)$, $t_0>0$,
\begin{equation}
\label{hill}
\nabla_{x} V (x_0, t_0) =0, \, \text{Hess}_x V(x_0, t_0) \ge 0\Rightarrow (L_C -\frac{\partial}{\partial t}) V(x_0, t_0) \le 0\,.
\end{equation}
This property (we call it {\it hill property}) was used by Barthe (see~\cite{barthe1}) to give a proof of Ehrhard's inequality. Here we will show how the hill property proves such classical inequalities as Prekopa--Leindler and Ehrhard's inequalities and also gives a whole plethora of isoperimetric inequalities ruled by certain PDE. This PDE will be \eqref{MMC}. 

The reason why the hill property works so well is that checking it allows to have a much bigger supply of functions $B$ such that 
$$
V(x, t) := B((\mathcal{P}_t^C u_1(a_1\cdot .))(x),\dots, 
(\mathcal{P}_t^C u_n(a_n\cdot .))(x))
$$ 
satisfies the hill property. It turns out that there exists a simple and often easily checkable for concrete functions $B$ PDE \eqref{MMC}, which is {\it sufficient} for \eqref{hill} if  
$$
V(x, t) := B((\mathcal{P}_t^C u_1(a_1\cdot .))(x),\dots, 
(\mathcal{P}_t^C u_n(a_n\cdot .))(x))\,.
$$
Let us prove this statement

In what follows we will need the following conditions at infinity:
\begin{equation}
\label{infinity0}
\forall T>0,\, \liminf_{|x|\to\infty} \inf_{t\in [0, T]} V(x, t) \ge 0\,.
\end{equation}

\begin{theorem}
\label{MMC_hill}The following statements hold:
\begin{itemize}
\item[(i)]
If $V(x,t)$ has the hill property (\ref{hill}) with certain $C\geq 0$ and also property (\ref{infinity0}) at infinity then $V(x,0)\geq 0$ for all $x$ implies $V(x,t)\geq 0$ for all $x$. 
\item[(ii)]
Let  $C\geq 0$ be  $k\times k$ matrix such that $\langle Ca_{j}, a_{j}\rangle >0$ for all $j$, and  the first line of \eqref{MMC} is satisfied for $B$. Then  $V(x, t) := B((\mathcal{P}_t^C u_1(a_1\cdot .))(x),\dots, 
(\mathcal{P}_t^C u_n(a_n\cdot .))(x))$ has the hill property. 
Moreover, if in addition $V$ has property (\ref{infinity0}) at infinity then $V(x,0)\geq 0$ for all $x$ implies $V(x,t)\geq 0$ for all $x$.  In particular,  for $x=0$ and $t=1/2$ we have 
\begin{align}\label{Biso}
 B\left(\int_{\mathbb{R}} u_{1}(y\sqrt{\langle Ca_{1},a_{1}\rangle})d\gamma_{1}(y),\ldots, \int_{\mathbb{R}} u_{1}(y\sqrt{\langle Ca_{n},a_{n}\rangle})d\gamma_{1}(y) \right)\geq 0.
\end{align}
\end{itemize}
\end{theorem}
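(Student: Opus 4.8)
The plan is to prove items (i) and (ii) separately; the ``moreover'' assertion and \eqref{Biso} then follow formally from them.

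\emph{Item (i).} I would run the parabolic minimum principle, but using only the hill property in place of a global differential inequality. Suppose for contradiction that $V(x_*,t_*)=-2\eta<0$ for some $t_*>0$, $\eta>0$. First, invoking \eqref{infinity0} with $T=t_*$, pick $R>0$ so that $V(x,t)\ge-\eta$ whenever $|x|\ge R$ and $t\in[0,t_*]$; in particular $|x_*|<R$, and the whole argument is confined to the compact cylinder $\overline{B_R}\times[0,t_*]$. Next, to turn the inequality at a minimum into a strict contradiction, replace $V$ by $W:=V+\eps t$ with $0<\eps<\eta/t_*$. On the parabolic boundary $(\{|x|=R\}\times[0,t_*])\cup(\overline{B_R}\times\{0\})$ one has $W\ge-\eta$ (this is where $V(\cdot,0)\ge0$ enters), while $W(x_*,t_*)<-\eta$; hence $W$ attains its minimum over the cylinder at a point $(x_0,t_0)$ with $t_0>0$ and $|x_0|<R$. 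There $\nabla_xV(x_0,t_0)=0$, $\Hess_xV(x_0,t_0)\ge0$, and, since $t\mapsto W(x_0,t)$ is minimized at $t_0$, $\partial_tV(x_0,t_0)\le-\eps$. The hill property \eqref{hill} then yields $L_CV(x_0,t_0)\le\partial_tV(x_0,t_0)\le-\eps<0$, contradicting $L_CV(x_0,t_0)=\tr\!\big(C\,\Hess_xV(x_0,t_0)\big)\ge0$, which holds because $C\ge0$ and $\Hess_xV(x_0,t_0)\ge0$. Hence $V\ge0$.

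\emph{Item (ii).} I would first unwind the structure of $V$. Set $g_j(x,t):=(\mathcal{P}_t^C u_j(a_j\cdot .))(x)=U_j(a_j\cdot x,t)$, where $U_j$ solves $\partial_tU_j=\La Ca_j,a_j\Ra\,\partial_{yy}U_j$, $U_j(\cdot,0)=u_j$; since $\La Ca_j,a_j\Ra>0$, each $g_j$ is a genuine smoothing flow, so $V=B(g_1,\dots,g_n)$ is $C^2$ in $x$ and $C^1$ in $t$ for $t>0$ (I take for granted, as in the applications, that $\vec g(x,t)=(g_1,\dots,g_n)(x,t)$ stays in $\Omega$). Differentiating $V$ and using $\partial_{x_i}g_j=(a_j)_iU_j'$ together with $(L_C-\partial_t)g_j=0$ (which is \eqref{LC}), the first-order-in-$B$ terms collect into $\sum_m B_m(\vec g)(L_C-\partial_t)g_m=0$ and one is left with the key identity
\[
(L_C-\partial_t)V=\La\,(A^*CA\bullet\Hess B)(\vec g)\,\vec{U'},\ \vec{U'}\,\Ra,\qquad \vec{U'}=(U_1',\dots,U_n'),
\]
every mixed second derivative of $V$ having assembled into $A^*CA\bullet\Hess B$. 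Then observe that $\nabla_xV=\sum_jB_j(\vec g)\,U_j'\,a_j=A\,D(\vec g)\,\vec{U'}$, so the hypothesis $\nabla_xV(x_0,t_0)=0$ of the hill property says exactly $\vec{U'}(x_0,t_0)\in K_0:=\Ker\!\big(A\,D(\vec g(x_0,t_0))\big)$. Applying the orthogonal projection $P_{K_0}$ (which fixes $\vec{U'}(x_0,t_0)$) and using the first line of \eqref{MMC} in the form \eqref{MMCKer} at the point $\vec g(x_0,t_0)\in\Omega$, the key identity becomes
\[
(L_C-\partial_t)V(x_0,t_0)=\La\,P_{K_0}(A^*CA\bullet\Hess B)(\vec g(x_0,t_0))\,P_{K_0}\,\vec{U'}(x_0,t_0),\ \vec{U'}(x_0,t_0)\,\Ra\le0.
\]
So $\nabla_xV(x_0,t_0)=0$ alone already forces $(L_C-\partial_t)V(x_0,t_0)\le0$, which is even more than the hill property requires. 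Combining with (i): if in addition $V$ has property \eqref{infinity0} and $V(\cdot,0)\ge0$, then $V\ge0$ everywhere; finally, reading off $V(0,1/2)\ge0$ and using \eqref{ax} to identify $(\mathcal{P}_{1/2}^Cu_j(a_j\cdot .))(0)=\int_{\R}u_j(y\sqrt{\La Ca_j,a_j\Ra})\,d\gamma_1(y)$ turns this into \eqref{Biso}.

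I expect the main obstacle to be conceptual rather than computational. For (i) one must recognize that the parabolic minimum principle never invokes the differential inequality except \emph{at a candidate spatial minimum} of $V$, so the hill property — which controls $(L_C-\partial_t)V$ precisely there — is all that is needed; the $\eps t$ perturbation and the confinement to a compact cylinder provided by \eqref{infinity0} are the two technical devices that make this rigorous (without the perturbation one only extracts $L_CV=0$ at the minimum, not a contradiction). For (ii) the crux is the bookkeeping that converts $\nabla_xV=0$ into the kernel membership $\vec{U'}\in\Ker(AD)$ and the second derivatives of $V$ into $A^*CA\bullet\Hess B$, which is exactly why the weaker PDE \eqref{MMC} (rather than \eqref{MC}) suffices here. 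The regularity points — smoothness of $V$ for $t>0$ and the flows remaining in $\Omega$ — are routine under the standing smoothness and boundedness hypotheses on the $u_j$ and on $B$.
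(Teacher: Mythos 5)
Your proposal is correct and follows essentially the same route as the paper: the $\eps t$ perturbation plus the minimum principle (with \eqref{infinity0} confining matters to a compact cylinder) for (i), and for (ii) the identity $(L_C-\partial_t)V=\La (A^*CA\bullet\Hess B)\vec{U'},\vec{U'}\Ra$ combined with the observation that $\nabla_xV=0$ means $\vec{U'}\in\Ker(AD)$, so \eqref{MMCKer} applies. Your remark that the gradient condition alone suffices in (ii) matches the paper, which likewise never uses $\Hess_xV\ge0$ in that step.
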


\begin{proof} \quad\\

\textup{(i)} We check that the condition at infinity and the hill property imply the minimum principle: $V(x, t)\ge 0$ for all $x$. Here we follow the proof of Barthe \cite{barthe}. It is enough to show that for any $\eps>0$ we have $V_\eps(x,t):= V(x, t) +\eps t \geq 0$.

First we check that  for any $T$, $V_\eps(x,t)$ does not attain local minimum  in $\mathbb{R}^{k}\times (0,T]$. Indeed, if it does attain a local minimum at point $(x_{0},t_{0})$ then $\Hess_{x} V_{\eps}(x_{0},t_{0})=\Hess_{x} V(x_{0},t_{0})\geq 0, \nabla_{x} V_{\eps}(x_{0},t_{0})=\nabla_{x} V(x_{0},t_{0})=0$ and $\partial_{t} V_{\eps}(x_{0},t_{0})=\partial_{t}V(x_{0},t_{0})+\eps =0$. The last property implies that $\partial_{t} V(x_{0},t_{0})=-\eps <0$. However, the hill property implies that ($L_{C}-\partial_{t})V(x_{0},t_{0})\leq 0$. But notice that $L_{C}V(x_{0},t_{0})=\tr(C\Hess_{x}V)(x_{0},t_{0})\geq 0$ since $C\geq 0$ and $\Hess_{x}V(x_{0},t_{0})\geq 0$. Putting things together we obtain $\partial_{t} V(x_{0},t_{0})\geq 0$, and this contradicts to the fact that $\partial_{t} V(x_{0},t_{0})=-\eps$.

Now suppose $V(x_1, t_1) =-\delta, \delta>0$. Then for very small $\eps$, $V_\eps(x_1, t_1)<0$. Taking into account that $V(x, 0)\ge 0$ and assumption \eqref{infinity0} we conclude that $V_\eps$ must have a local minimum in $\R^k\times (0, t_1]$. This is a contradiction.\\

\textup{(ii)} Let \eqref{MMC} be satisfied (just its first line). Let $D$ denotes $n\times n$ diagonal matrix such that it has $\nabla B$ on the diagonal, and let $I$ denotes identity matrix.  
Let  
$$
V(x, t) := B((\mathcal{P}_t^C u_1(a_1\cdot .))(x),\dots, 
(\mathcal{P}_t^C u_n(a_n\cdot .))(x))\,.
$$ 
Let us rewrite $V(x, t)$ in the following form.
\begin{align}
\label{bba}
V(x,t)=B(u_{1}(a_{1}\cdot x,t),\ldots, u_{n}(a_{n}\cdot x,t))\,,
\end{align}
where $u_{j}(z,t)$ denotes $\frac{\partial}{\partial t}u_{j}(y,t)=\langle Ca_{j},a_{j}\rangle u''_{j}(y,t)$.

Note that   $\nabla_{x} V=0$ implies that $\sum_{p} \frac{\partial B}{\partial u_{p}}u'_{p}a_{pj}=0$ for all $j=1,\ldots, k$. This means that $ADu'=0$ where $u'=(u'_{1},\ldots, u'_{n})$. Condition $ADu'=0$ implies that $u'=P_{\mathrm{ker}\; AD}\,\s{y}$
 for some $\s{y} \in \mathbb{R}^{n}$.  After this  the direct computations show
\begin{align*}
&Tr(C\; \mathrm{Hess}_{x}\; V(x,t))-\frac{\partial V}{\partial t}=\left(\sum_{i,j}c_{ij}\frac{\partial^{2}}{\partial x_{i} \partial x_{j}}- \frac{\partial }{\partial t}\right)V = \langle A^{*}CA\bullet \mathrm{Hess}\, B\, u', u'\rangle =\\
&\langle P_{\mathrm{ker}\; AD}\left(A^{*}CA\bullet \mathrm{Hess}\, B\right)P_{\mathrm{ker}\; AD}\,\s{y},\s{y}\rangle \leq 0.
\end{align*}
The second line is precisely the first part of \eqref{MMC}, which we assumed in the theorem.

\bigskip


\end{proof}

For $V(x, t) := B( u_1(a_1\cdot x,t),\dots, u_n(a_n\cdot x,t))$, let us see that \eqref{infinity0}  is practically implied by the requirement that $V(x, 0)\ge 0$ for all $x\in \R^k$.  

\bigskip

We would like to show that  for any $T>0$ we have $\liminf_{|x| \to \infty} \inf_{0\leq t \leq T} B((P_{t}^{C} \vec{u})(x))\geq 0$: 
\begin{align*}
\liminf_{|x| \to \infty} \inf_{0\leq t \leq T} B\left(\ldots, \frac{1}{\sqrt{4 \pi \langle Ca_{j},a_{j}\rangle t}}\int_{\mathbb{R}}u_{j}(y)e^{-\frac{(\langle a_{j},x\rangle-y)^{2}}{4t\langle Ca_{j},a_{j}\rangle}}dy,\ldots \right)\geq 0.
\end{align*}

For a bounded function $u_j$ with compact support
$$
\lim_{|a_{j}\cdot x|\to\infty}\sup_{t\in [0,T]}\big|\frac{1}{\sqrt{4 \pi \langle Ca_{j},a_{j}\rangle t}}\int_{\mathbb{R}}u_{j}(y)e^{-\frac{(\langle a_{j},x\rangle-y)^{2}}{4t\langle Ca_{j},a_{j}\rangle}}dy\big| =0.
$$

Then $0\le V(x, 0)=B((u_1)(a_1\cdot x),\dots, 
(u_n)(a_n\cdot x))\to B(0)$ if $x\to\infty$ in such a way that $\min_{i\in [1,\dots, k]}|a_i\cdot x| \to \infty$.
Then
$$
\lim_{\min_{i\in [1,\dots, k]}|a_i\cdot x| \to \infty}V(x, t) =\lim_{\min_{i\in [1,\dots, k]}|a_i\cdot x| \to \infty} B((P_t^C u_1)(a_1\cdot x),\dots, 
(P_t^C u_n)(a_n\cdot x)) =B(0) \ge 0\,.
$$

However, we need \eqref{infinity0}, which is a stronger property. Let us assume to this end that on its domain of definition  (usually a bounded subset of $\R^n$) function $B$ satisfies

\begin{equation}
\label{one}
\liminf_{|x|\to \infty}  \inf_{u_{j}}B\left(\frac{u_1}{1+|a_{1}\cdot x|},\ldots,\frac{u_{n}}{1+|a_{n}\cdot x|}\right)\geq 0\,.
\end{equation}
Here $\inf_{u_{j}}$ is taken over the ranges of the functions $u_{j}(\cdot)$ i.e., $\inf_{u_{j}}=\inf_{u_{1},\ldots, u_{n}\, :\, u_{j}\in \mathrm{range}(u_{j}(\cdot))}$.
Notice that  this is a property of $B$ and vectors $a_i$ and not just $B$ alone.



\bigskip

\noindent{\bf Example 1.}  We will be using (see Section \ref{Er}) such $B$:
$$
B(u_1, u_2, u_3):= u_3 -\Phi(\alpha_1\Phi^{-1}(u_1) +\alpha_2\Phi^{-1}(u_2))\,,
$$
(where  $\Phi(x):=\frac{1}{\sqrt{2\pi}}\int_{-\infty}^x e^{ -s^2/2} ds$)
with some constants $\alpha_1>0, \alpha_2>0$. The domain of definition will be cube $Q=[0,1-\delta]^3$ for any $0<\delta<1$.
Let $a_{1}=(1,0)^{T}$, $a_{2}=(0,1)^{T}$ and $a_{3}=(a_{31},a_{32})^{T}$ where $a_{31},a_{32}\neq 0$.
 Since we cannot find a vector $x \in \mathbb{R}^{2}$ which will be simultaneously orthogonal to $a_{1},a_{3}$ (or $a_{2},a_{3}$) then 
\eqref{one} is satisfied as
$$
\eps - \Phi(\alpha_1\Phi^{-1}(\eps) +\alpha_2\Phi^{-1}(u_2))\approx -\Phi(-\infty +\alpha_2\Phi^{-1}(u_2))=- \Phi(-\infty)=0\,,
$$
and symmetric claim holds for $u_2$. So the assumption at  infinity \eqref{infinity0} will follow.

\section{Simplifications and reductions of the Bellman equation of the second type: \eqref{MMC}}
\label{SR}

Let $D$ denote the diagonal matrix-function with $\nabla B=(B_1, \dots, B_n)$ on the diagonal. We usually assume that $A$ has a full rank. Now we assume that $AD$ has full rank in the domain of definition of $B$. In applications this routinely happens.  If $k=n$ then our condition  becomes trivial and is always true. 
If $k=n-1$ then $P_{\mathrm{ker}}$ has rank 1 and therefore  the first line of \eqref{MMC} holds if and only if 
\begin{align*}
&\mathrm{Tr}(P_{\mathrm{ker} \;AD}\left(A^{*}CA\bullet \mathrm{Hess}\, B\right)P_{\mathrm{ker} \;AD})=\\
&\sum_{j} B_{jj}\langle Ca_{j}, a_{j}\rangle - \sum_{i,j}B_{ij}B_{i}B_{j} \langle Ca_{i}, a_{j}\rangle \langle (AD^{2}A^{*})^{-1}a_{i},a_{j}\rangle \leq 0.
\end{align*} 

If $k=1$ then $d(A)\; \mathrm{Hess}\, B\; d(A) \leq 0$ on the orthogonal complement of the 1-dimensional space $(a_{1}B_{1},\ldots, a_{n}B_{n})$, where $d(A)$ is diagonal matrix having on the diagonal $a_{j}$.   For example, if $a_{j}=1$ for all $j$   this means that $\mathrm{Hess}\, B \leq 0$ on the  variable subspace orthogonal to  $\nabla B$.

\subsection{Applications to special $B$'s when $k=n-1$.}
Assume $k=n-1$. Assume also  $a_{j} =e_{j}$ for $j=1,\ldots, n-1$, where $e_{j}$ are basis vectors.    
Since $(n-1)\times (n-1)$ matrix $AD^{2}A^{*}$ has the following form
\begin{align*}
AD^{2}A^{*}=B_{n}^{2}a_{n}a_{n}^{T}+D_{2}^{2}
\end{align*}
where $D_{2}$ is diagonal matrices consisting of elements $B_{1},\ldots, B_{n-1}$ on the diagonal,  we obtain  by Sherman--Morison formula
\begin{equation}
\label{ShMo}
(B_{n}^{2}a_{n}a_{n}^{T}+D_{2}^{2})^{-1}=D_{2}^{-2}-\frac{B_{n}^{2}D_{2}^{-2}a_{n}a_{n}^{T}D_{2}^{-2}}{1+B_{n}^{2}a_{n}^{T}D^{-2}_{2}a_{n}},
\end{equation}
therefore
\begin{align*}
&\mathrm{Tr}(P_{\mathrm{ker} \;AD}\left(A^{*}CA\bullet \mathrm{Hess}\, B\right)P_{\mathrm{ker} \;AD})= \sum \langle Ca_{j}, a_{j}\rangle (B_{jj}-B_{jj}B_{j}^{2}a_{j}^{T}D_{2}^{-2}a_{j})+\\
&\sum\langle Ca_{j}, a_{j}\rangle \frac{B_{jj}B_{j}^{2}B_{n}^{2}|a_{n}^{T}D_{2}^{-2}a_{j}|^{2}}{1+B_{n}^{2}a_{n}^{T}D^{-2}_{2}a_{n}}-\sum_{i\neq j}B_{ij}B_{i}B_{j} \langle Ca_{i}, a_{j}\rangle \langle (AD^{2}A^{*})^{-1}a_{i},a_{j}\rangle=\\
&\frac{1}{B_{n}^{-2}+a_{n}^{T}D_{2}^{-2}a_{n}}\left[\langle Ca_{n},a_{n}\rangle \frac{B_{nn}}{B_{n}^{2}}  +\sum_{j= 1}^{n-1} \langle Ca_{j},a_{j}\rangle \frac{B_{jj}}{B_{j}^{2}}a_{nj}^{2}\right]-\sum_{i\neq j}B_{ij}B_{i}B_{j} \langle Ca_{i}, a_{j}\rangle \langle (AD^{2}A^{*})^{-1}a_{i},a_{j}\rangle
\end{align*}

Notice that if $B(u_{1},\ldots, u_{n})=u_{n}-H(u_{1},\ldots, u_{n-1})$ then using \eqref{ShMo} again we get
\begin{equation}
\label{H2}
\mathrm{Tr}(P_{\mathrm{ker} \;AD}\left(A^{*}CA\bullet \mathrm{Hess}\, B\right)P_{\mathrm{ker} \;AD}) = \frac{-1}{1+a_{n}^{T}D_{2}^{-2}a_{n}}\left(\sum_{i,j=1}^{n-1}\frac{H_{ij}}{H_{i}H_{j}}a_{ni}a_{nj}c_{ij}\right).
\end{equation}

\bigskip

In fact,  $B_{nj}= B_{in}=0$, and \eqref{ShMo} gives us  for $i\neq j, i\neq n, j\neq n,$ 
\begin{align*}
&\langle (AD^{2}A^{*})^{-1}a_{i},a_{j}\rangle=
\left\langle  (B_{n}^{2}a_{n}a_{n}^{T}+D_{2}^{2})^{-1} a_i, a_j\right\rangle=
\\
&-\left\langle  \frac{B_{n}^{2}D_{2}^{-2}a_{n}a_{n}^{T}D_{2}^{-2}}{1+B_{n}^{2}a_{n}^{T}D^{-2}_{2}a_{n}}a_i, a_j\right\rangle = \frac{\La D_{2}^{-2}a_{n}a_{n}^{T}D_{2}^{-2}a_i, a_j\Ra}{B_n^{-2}+a_{n}^{T}D^{-2}_{2}a_{n}}=\frac{a_{ni}a_{nj}}{B_i^2B_j^2(B_n^{-2}+a_{n}^{T}D^{-2}_{2}a_{n})}\,,
\end{align*}
because $a_i=e_i, 1\leq i \leq n-1,$ and $D^{-2}$ is a diagonal matrix. Hence, \eqref{H2} is proved.

\bigskip

Provided that the condition at infinity  is satisfied for $B(u_1,\dots, u_n)= u_n-H(u_1,\dots, u_{n-1})$ and our vectors have the form: $a_1=e_{1}, a_2=e_2,\dots, a_{n-1}=e_{n-1}$,  some $a_{n}\in \R^{n-1}$,
we reduced  the Bellman equation of the second sort \eqref{MMC} to a following nonlinear partial differential ineaquality on $H$:

\begin{equation}
\label{onH}
\sum_{i,j=1}^{n-1}\frac{H_{ij}}{H_{i}H_{j}}a_{ni}a_{nj}c_{ij} \geq  0\,.
\end{equation}
\begin{cor}\label{preapp}
Given a vector $a_n=\{a_{nj}\}_{j=1}^k$, any  function $H$ for which there exists  $k\times k$ matrix $C\geq 0$  such that $\langle Ca_{n},a_{n}\rangle, c_{jj} >0$, and simultaneously the PD inequality \eqref{onH} holds, gives rise to an ``isoperimetric inequality" \eqref{Biso}:
$$
\text{If for all}\,\, x\in \R^k\,\, u_1(a_n\cdot x) -H(u_2(x_2),\dots, u_n(x_n))\ge 0\,\,\text{then}
$$
\begin{equation}
\label{H3iso}
\text{ for all}\,\, x\in \R^k,\, t>0,\,\,( P^{C}_t u_1)(a_n\cdot x) - H(P^{C}_t u_2(x_2),\dots, P^{C}_t u_n(x_n))\ge 0\,.
\end{equation}
(again, one should check the condition at infinity that appears from \eqref{infinity0}).
\end{cor}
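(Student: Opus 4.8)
The plan is to deduce this from Theorem~\ref{MMC_hill}(ii) applied to the Bellman candidate
$$B(u_1,\dots,u_n):=u_1-H(u_2,\dots,u_n),$$
i.e.\ the relabelled version of the function $u_n-H(u_1,\dots,u_{n-1})$ treated in Section~\ref{SR}, with the column configuration in which one column is the prescribed vector $a_n\in\R^k$ (here $k=n-1$) and the remaining $n-1$ columns are the standard basis of $\R^{n-1}$; then $A$ has full rank $k$ for free. As throughout Section~\ref{SR} I assume $H$ has non-vanishing first partials, so $\nabla B$ has no zero entry, $D$ is invertible, $AD$ has full rank, and $P_{\mathrm{ker}\; AD}$ is a rank-one orthogonal projection.

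The first step is to recognize that the hypothesis \eqref{onH} is nothing but the first line of \eqref{MMC} for this $B$. By the identity \eqref{H2} established in Section~\ref{SR},
$$\mathrm{Tr}\bigl(P_{\mathrm{ker}\; AD}(A^{*}CA\bullet\Hess B)P_{\mathrm{ker}\; AD}\bigr)=\frac{-1}{1+a_{n}^{T}D_{2}^{-2}a_{n}}\Bigl(\sum_{i,j=1}^{n-1}\frac{H_{ij}}{H_{i}H_{j}}a_{ni}a_{nj}c_{ij}\Bigr);$$
since a symmetric matrix of rank $\le 1$ is negative semidefinite iff its trace is $\le 0$, and the scalar $-1/(1+a_n^{T}D_2^{-2}a_n)$ is strictly negative, the inequality $P_{\mathrm{ker}\; AD}(A^{*}CA\bullet\Hess B)P_{\mathrm{ker}\; AD}\le0$ is equivalent to \eqref{onH}. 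The positivity requirements in the corollary ($C\ge0$, $\langle Ca_n,a_n\rangle>0$, $c_{jj}>0$) are exactly what is needed to fit the hypotheses of Theorem~\ref{MMC_hill}(ii): they force $\langle Ca_j,a_j\rangle>0$ for every column $a_j$ (for the basis columns this reads $\langle Ce_j,e_j\rangle=c_{jj}>0$).

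The second step is to invoke Theorem~\ref{MMC_hill}(ii) directly. With the first line of \eqref{MMC} in force, the function
$$V(x,t):=B\bigl((\mathcal{P}_t^{C}u_1)(a_1\cdot x),\dots,(\mathcal{P}_t^{C}u_n)(a_n\cdot x)\bigr)$$
(for smooth bounded $u_j$) has the hill property, and if moreover $V$ satisfies the condition at infinity \eqref{infinity0}, then $V(\cdot,0)\ge0$ forces $V(\cdot,t)\ge0$ for all $t>0$. Using \eqref{ax} and the fact that the non-prescribed columns are standard basis vectors, each $\mathcal{P}_t^{C}u_j(a_j\cdot x)$ is a one-dimensional heat flow $P_t^{C}u_j$ in a single variable; hence $V(\cdot,0)\ge0$ unwinds to the hypothesis $u_1(a_n\cdot x)-H(u_2(x_2),\dots,u_n(x_n))\ge0$ and $V(\cdot,t)\ge0$ unwinds to the conclusion \eqref{H3iso}. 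Evaluating at $x=0$, $t=1/2$ (and, in the Ornstein--Uhlenbeck version, letting $t\to\infty$ as in Theorem~\ref{MMC_hill}) then gives \eqref{Biso}.

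The genuinely delicate point --- and the reason the statement carries the parenthetical caveat --- is verifying \eqref{infinity0} for $B=u_1-H$ and the concrete vectors; this cannot be done once and for all. The workable route is the sufficient condition \eqref{one}: one checks that $B$ becomes $\ge0$ once its arguments are damped by the factors $1/(1+|a_j\cdot x|)$, exploiting that for a nontrivial $a_n$ one cannot keep $a_n\cdot x$ and the individual coordinates bounded simultaneously as $|x|\to\infty$, so at least one argument of $H$ is driven to the boundary of the domain where $B$ is non-negative. This is precisely the estimate carried out in Example~1 for the Ehrhard function, and it is the step I expect to be the main obstacle in any concrete application of the corollary.
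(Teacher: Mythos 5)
Your proposal is correct and follows essentially the same route as the paper, which presents Corollary~\ref{preapp} as an immediate consequence of the reduction \eqref{H2} (rank-one projection, so negative semidefiniteness of $P_{\ker AD}(A^*CA\bullet\Hess B)P_{\ker AD}$ is equivalent to its trace being $\le 0$, i.e.\ to \eqref{onH}) combined with Theorem~\ref{MMC_hill}(ii); your relabelling $B=u_1-H(u_2,\dots,u_n)$ versus the paper's $u_n-H(u_1,\dots,u_{n-1})$ is immaterial. One tiny point: \eqref{Biso} already follows from evaluating at $x=0$, $t=1/2$ via \eqref{ax}, so no Ornstein--Uhlenbeck limit $t\to\infty$ is needed there.
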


\bigskip

\section{Further reductions in Bellman equation of the second type: PDE that rules the Prekopa--Leindler and Ehrhard inequality}
\label{Er}

Functional version of Prekopa--Leindler and Ehrhard's inequality in arbitrary dimension can be formulated as follows:  if 
\begin{align*}
\Phi^{-1}(h(\sum^{\ell} b_{j} x_{j})) \geq \sum^{\ell} b_{j} \Phi^{-1}(f_{j}(x_{j}))\quad \text{for all} \quad x_{j} \in \mathbb{R}^{m},
\end{align*}
then 
\begin{align*}
\Phi^{-1}\left(\int_{\mathbb{R}^{m}}hd\gamma_{m}\right) \geq \sum^{\ell} b_{j} \Phi^{-1}\left(\int_{\mathbb{R}^{m}}f_{j}d\gamma_{m}\right).
\end{align*}

In case of Ehrhard's inequality we require that $b_{j}>0$,  $\sum b_{j} \geq 1$, $b_{j}-\sum_{i\neq j} b_{i} \leq 1$, $h, f_{j} : \mathbb{R}^{m} \to [0,1]$ and $\Phi(x) = \int_{-\infty}^{x}d\gamma_{1}$, and in case of Prekopa--Leindler's inequality requirements are different: 
$b_{j}>0$,  $\sum b_{j} =1$, $h, f_{j} : \mathbb{R}^{m} \to \mathbb{R}^{+}$ and $\Phi(x) = e^{x}$.

We will prove these inequalities  by using second type of Bellman PDE when $m=1$. Arbitrary dimension follows easily by iterating  one dimensional case $m$ times.

\bigskip

Let $k=n-1$. Take any vector $a_{n}=b=(b_{1},\ldots, b_{k})\in \mathbb{R}^{k}$ such that $b_{j}>0$.  Take $H(x_{1},\ldots, x_{k})=\Phi(b_{1}\Phi^{-1}(x_{1})+\ldots +b_{k}\Phi^{-1}(x_{k}))$ where $\Phi(x)=\int_{-\infty}^{x}\varphi(x)dx$,  $\varphi>0$, $\varphi \in C^{1}$ and $\Phi(x)$ is finite for any $x \in \mathbb{R}$. Let $k\times k$ matrix $C=\{ c_{ij}\}\geq 0$. Clearly $\partial_{j} H > 0$ for all $j$. 

\bigskip
Direct computations give
\begin{align}\label{prepre}
\left(\sum_{i,j=1}^{k}\frac{H_{ij}}{H_{i}H_{j}}b_{i}b_{j}c_{ij}\right) = \frac{\varphi'(\sum b_{j}y_{j})}{\varphi(\sum b_{j}y_{j})}\langle Cb,b\rangle - \sum_{j} \frac{\varphi'(y_{j})}{\varphi(y_{j})}b_{j}c_{jj}.
\end{align}
Here $y_{j} = \Phi(x_{j})$ for all $j=1,\ldots, n-1$. 

\bigskip

Let us first require that $\langle Cb,b\rangle=1$ and $c_{jj}=1$ for all $j=1,\ldots, k$. (Of course the existence of such $C\ge 0$ should depend on vector $b$.)

\bigskip

In order to apply Corollary~\ref{preapp} we need to  have the following conditions:
\begin{itemize}
\item[A1.] There exists $C\geq 0$ such that $\langle Cb,b\rangle =1$ and $c_{jj}=1$ for all $j$. 
\item[A2.] Logarithmic derivative  of $\varphi$ satisfies the following ``concavity condition'':
\begin{equation}
\label{logdev}
(\log \varphi)'(\Sigma b_{j}y_{j}) \ge \Sigma b_{j} (\log\varphi)'(y_j)\,.
\end{equation}
\item[A3.] Condition at infinity (\ref{infinity0}) is satisfied i.e., 
\begin{align*}
\liminf_{|x|\to \infty} \inf_{u_{j}\geq 0} \frac{u_{n}}{1+|b\cdot x|}-\Phi\left(b_{1}\Phi^{-1}\left(\frac{u_{1}}{1+|x_{1}|}\right)+\ldots +b_{k}\Phi^{-1}\left(\frac{u_{k}}{1+|x_{k}|}\right)\right)\geq 0.
\end{align*}
\end{itemize}
Then under the assumptions A1-A3, Corollary~\ref{preapp} implies: if for compactly supported functions $u_{1},\ldots, u_{n}$ we have 
\begin{align*}
u_{n}(b\cdot x)\geq \Phi(b_{1}\Phi^{-1}(u_{1}(x_{1}))+\ldots +b_{k}\Phi^{-1}(u_{k}(x_{k}))) \quad \text{for all} \quad x \in \mathbb{R}^{k},
\end{align*}
then 
\begin{align*}
\int u_{n} d\gamma \geq \Phi\left[ b_{1}\Phi^{-1}\left(\int u_{1}d\gamma \right)+\ldots +b_{k}\Phi^{-1}\left(\int u_{k}d\gamma\right)\right].
\end{align*}

We are going to study each condition of $A1-A3$ separately. 

\bigskip

{\bf Condition A1.} Since $C=V^{*}V$ for some $V=(v_{1},\ldots, v_{k})$ where $v_{j}$ are columns of $V$ we see that condition $c_{jj}=\langle Ce_{j},e_{j}\rangle=1$ implies that $v_{j}$'s are unit vectors. Condition $\langle Cb,b\rangle=1$ implies that $|\sum_{j=1}^{k} v_{j}b_{j}|=1$. The last one gives necessary conditions $\sum_{j} b_{j}\geq 1$. Note also that triangle inequality together with $|\sum_{j=1}^{k} v_{j}b_{j}|=1$ implies that $1\geq b_{j}-\sum_{i\ne j}b_{i}$. Thus we obtain two necessary conditions 
\begin{align}\label{bartha}
1\leq \sum_{j=1}^{k}b_{j} \quad \text{and}\quad 1\geq b_{j}-\sum_{i\neq j}b_{i}\quad \text{for all} \quad j=1,\ldots, k. 
\end{align}
It turns out that these two conditions are also sufficient for the existence of matrix $C$ (see Lemma~3 in \cite{barthe1}).

\bigskip

{\bf Condition A2.} This condition will be just assumption on the function $\varphi$.  Note that the particular function $\varphi(x)=\lambda e^{-rx^{2}}$ always gives  us equality   in (\ref{logdev}). In particular, this choice will give us Ehrhard's inequality, namely the choice  $\varphi(x)=\frac{1}{\sqrt{2\pi}} e^{-x^{2}/2}$. 

Another interesting choice  is $\varphi(x)=e^{x}$ for which (\ref{logdev}) becomes $\sum b_j\leq 1$. This together with (\ref{bartha}) gives $\sum b_{j}=1$. This will give us Prekopa--Leindler's inequality. Note that in this case  by our choice of $\phi$ we have  $H(x_{1},\ldots, x_{k})=x_{1}^{b_{1}}\cdots x_{k}^{b_{k}}$, where $b_{j}>0$ and $\sum_{j} b_{j}=1$. 

\bigskip

{\bf Condition A3.}
Here we follow the same reasonings as in Example 1. Note that if $x \to \infty$ then by compactness argument,  we can choose subsequence and we can assume that one of the coordinates $x_{s} \to \infty$. This implies that $\Phi^{-1}\left(\frac{u_{s}}{1+|x_{s}|}\right)\approx -\infty$. Thus condition A3 would be satisfied provided that none of the $\Phi^{-1}\left(\frac{u_{j}}{1+|x_{j}|}\right) \approx \infty$, $j\neq s$. For this purpose let us require that $u_{i}$'s are separated from infinity in the sense of $\Phi$, i.e.,  $\Phi^{-1}(|u_{j}|_{\infty})<\infty$.

\bigskip

It is interesting to mention that we can get rid off Condition A1, because based on Corollary~\ref{preapp} we do not have to require $\langle Ca_{j},a_{j}\rangle =1$ for all $j$ (this was necessary for the applications). In general the above considerations lead us to the following corollary.

Let $k\geq 2$. $b=(b_{1},\ldots, b_{k})$ and $b_{j}>0$. For $\varphi>0$ such that $\varphi \in C^{1}$ set $\Phi(x)=\int_{-\infty}^{x} \varphi$. Assume $\Phi(x)$ is locally finite.
Let $u_{j}$ be smooth functions with compact support such that $\Phi^{-1}(|u_{j}|_{\infty})<\infty$.

\begin{cor}
Let  $v_{j} \in \mathbb{R}^{k}$ be such that $v_{j} \neq 0$ and $\sum b_{j} v_{j} \neq 0$.
If $\Phi(x)=\int_{-\infty}^{x} \varphi$, and
\begin{align*}
|\sum b_{j}v_{j}|^{2}(\log \varphi)'(\sum b_{j}y_{j}) \ge \sum b_{j}|v_{j}|^{2} (\log\varphi)'(y_j) \quad \text{for all}\quad y_{j}, 
\end{align*}
then the inequality 
\begin{align*}
\Phi^{-1}\left(u_{0}(\sum b_{j}x_{j}) \right)\geq \sum b_{j}\Phi^{-1}(u_{j}(x_{j})) \quad \text{for all} \quad x \in \mathbb{R}^{k},
\end{align*}
implies 
\begin{align*}
\Phi^{-1}\left(\int_{\mathbb{R}} u_{0}(|\sum b_{j}v_{j}|y) d\gamma(y)\right) \geq \sum  b_{j}\Phi^{-1}\left(\int_{\mathbb{R}} u_{j}(|v_{j}|y)d\gamma(y) \right).
\end{align*}
\end{cor}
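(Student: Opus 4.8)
The plan is to derive the final corollary directly from Corollary~\ref{preapp} by applying it to the function
$H(x_1,\dots,x_k)=\Phi\big(b_1\Phi^{-1}(x_1)+\cdots+b_k\Phi^{-1}(x_k)\big)$
together with the matrix $C=V^{*}V$, where $V=(v_1,\dots,v_k)$ has the given vectors as its columns. Since the $v_j$ are nonzero, $c_{jj}=|v_j|^2>0$; since $\sum b_j v_j\neq 0$, we get $\langle Cb,b\rangle=|\sum b_j v_j|^2>0$. Thus $C\ge 0$ and the positivity hypotheses of Corollary~\ref{preapp} are met. The first task is then to verify that the partial differential inequality \eqref{onH}, namely $\sum_{i,j}\frac{H_{ij}}{H_iH_j}b_ib_jc_{ij}\ge 0$, is exactly the hypothesis of the corollary. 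This is immediate from the computation \eqref{prepre}, which gives
$\sum_{i,j}\frac{H_{ij}}{H_iH_j}b_ib_jc_{ij}=(\log\varphi)'(\Sigma b_jy_j)\langle Cb,b\rangle-\sum_j(\log\varphi)'(y_j)b_jc_{jj}$;
substituting $\langle Cb,b\rangle=|\sum b_jv_j|^2$ and $c_{jj}=|v_j|^2$ turns the required nonnegativity into precisely the assumed inequality
$|\sum b_jv_j|^2(\log\varphi)'(\Sigma b_jy_j)\ge \sum b_j|v_j|^2(\log\varphi)'(y_j)$.

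Next I would unwind what Corollary~\ref{preapp} yields. With $n=k+1$, $a_j=e_j$ for $j\le k$, $a_n=b$, and $B(u_1,\dots,u_n)=u_n-H(u_1,\dots,u_{n-1})$, the hypothesis ``$u_n(b\cdot x)\ge H(u_1(x_1),\dots,u_k(x_k))$ for all $x$'' is, after applying $\Phi^{-1}$ (which is legitimate because $H$ takes values in the range of $\Phi$ and $\Phi^{-1}$ is increasing), the same as
$\Phi^{-1}(u_n(b\cdot x))\ge \sum b_j\Phi^{-1}(u_j(x_j))$.
The conclusion \eqref{H3iso}, evaluated at $x=0$, $t=1/2$, is \eqref{Biso}:
$B\big(\int u_1(y\sqrt{\langle Ca_1,a_1\rangle})d\gamma_1,\dots,\int u_n(y\sqrt{\langle Ca_n,a_n\rangle})d\gamma_1\big)\ge 0$,
i.e. $\int u_n(|b\cdot\!|_C\,y)d\gamma \ge H\big(\int u_1(|v_1|y)d\gamma,\dots,\int u_k(|v_k|y)d\gamma\big)$, where $\langle Ca_j,a_j\rangle=|v_j|^2$ and $\langle Cb,b\rangle=|\sum b_jv_j|^2$. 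Applying $\Phi^{-1}$ once more gives exactly the asserted inequality
$\Phi^{-1}\big(\int u_0(|\sum b_jv_j|y)d\gamma\big)\ge\sum b_j\Phi^{-1}\big(\int u_j(|v_j|y)d\gamma\big)$
(here $u_0$ plays the role of $u_n$).

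The remaining point, and the one I expect to require the most care, is the verification of the condition at infinity \eqref{infinity0}, or rather its sufficient form \eqref{one}. Here I would argue exactly as in Example~1 and in the discussion of Condition~A3: given a sequence $|x|\to\infty$, pass to a subsequence along which some coordinate $x_s\to\infty$; then $\Phi^{-1}\!\big(\tfrac{u_s}{1+|x_s|}\big)\to -\infty$, while the hypothesis $\Phi^{-1}(|u_j|_\infty)<\infty$ keeps every other argument $\Phi^{-1}\!\big(\tfrac{u_j}{1+|x_j|}\big)$ bounded above, so the argument of $\Phi$ tends to $-\infty$ and $H(\cdots)\to 0$; since $u_n/(1+|b\cdot x|)\ge 0$ the $\liminf$ is $\ge 0$. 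One must note that $b\cdot x$ cannot stay bounded while $x_s\to\infty$ when all $b_j>0$ — actually what is needed is only that the argument of $\Phi$ in the $H$-term escapes to $-\infty$, which it does from the single escaping coordinate regardless of $b\cdot x$. The main obstacle is therefore purely bookkeeping: matching the scalings $\langle Ca_j,a_j\rangle=|v_j|^2$, $\langle Cb,b\rangle=|\sum b_jv_j|^2$ correctly through \eqref{Biso}, and checking that the hypotheses of Corollary~\ref{preapp} ($C\ge 0$, $c_{jj}>0$, $\langle Cb,b\rangle>0$, compact support, \eqref{onH}, condition at infinity) are all genuinely satisfied under the weaker standing assumptions of this corollary, where $\langle Cb,b\rangle=1$ and $c_{jj}=1$ are no longer imposed.
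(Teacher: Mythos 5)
Your proposal is correct and follows essentially the same route as the paper, whose proof consists precisely of taking $C=V^{T}V$ with $V=(v_{1},\ldots,v_{k})$, observing $\langle Ce_{j},e_{j}\rangle=|v_{j}|^{2}$ and $\langle Cb,b\rangle=|\sum b_{j}v_{j}|^{2}$, and invoking Corollary~\ref{preapp} together with the computation \eqref{prepre}. Your additional verifications (positivity of the relevant quadratic forms, the unwinding of \eqref{Biso}, and the condition at infinity via the Condition A3 argument) are exactly the details the paper leaves implicit.
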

\begin{proof}
Corollary immediately follows by taking $C=V^{T}V$ where $V=(v_{1},\ldots,v_{k})$ and noticing that $\langle Ce_{j},e_{j}\rangle=|v_{j}|^{2}$  and 
$\langle Cb,b\rangle= |\sum b_{j} v_{j}|^{2}$. 
\end{proof}

\begin{cor}\label{corgla}
Let $\Omega$ be a bounded  rectangular domain in $\mathbb{R}^{2}$ such that $0 \in \mathrm{Cl}(\Omega)$. Let $\alpha, \beta \in \mathbb{R}$ be such that $|\alpha|+|\beta| \geq $ and $1 \geq ||\alpha| - |\beta||$. Let a smooth function $H (x,y) : \Omega \to \mathbb{R}$ be such that $\frac{\partial H}{\partial x}, \frac{\partial H}{\partial y} \neq 0$ and 
\begin{align}\label{glavnoe}
(1-\alpha^{2}-\beta^{2})\frac{\partial H}{\partial x} \frac{\partial H}{\partial y} \frac{\partial^{2} H}{\partial x \partial y}+
\alpha^{2} \left( \frac{\partial H}{\partial y} \right)^{2}\frac{\partial^{2} H}{\partial x^{2}}+\beta^{2} \left(\frac{\partial H}{\partial x}\right)^{2}\frac{\partial^{2} H}{\partial y^{2}}\geq 0.
\end{align}

Then for smooth bounded function $u_{3}$ and compactly supported functions $u_{1}, u_{2}$  such that $(u_{1},u_{2}) :\mathbb{R}^{k} \to \Omega$,  the inequality holds 
\begin{align*}
\int_{\mathbb{R}^{k}}u_{3}d\gamma_{k} \geq H\left(\int_{\mathbb{R}^{k}} u_{1} d\gamma_{k}, \int_{\mathbb{R}^{k}} u_{2} d\gamma_{k}\right)
\end{align*}
whenever $u_{3}(\alpha x + \beta y) \geq H(u_{1} (x), u_{2}(y))\quad \text{for all}\quad x,y \in \mathbb{R}^{k}$.
\end{cor}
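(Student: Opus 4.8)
The plan is to deduce Corollary~\ref{corgla} from Corollary~\ref{preapp} (equivalently, from the $k=n-1$ reduction \eqref{onH}) by choosing the right matrix $C$ and translating hypothesis \eqref{glavnoe} into the nonlinear PD inequality \eqref{onH}. First I would set $n=3$, $k=2$, take the basis vectors $a_1=e_1$, $a_2=e_2$ in $\mathbb{R}^2$, and set $a_3=b:=(\alpha,\beta)$ (after replacing $\alpha,\beta$ by $|\alpha|,|\beta|$, which is harmless since only $b_j^2$ and products $a_{ni}a_{nj}c_{ij}$ enter, and one can absorb signs into $C$). The function $B(u_1,u_2,u_3):=u_3-H(u_1,u_2)$ then has $B_1=-H_x$, $B_2=-H_y$, $B_3=1$, all nonzero by hypothesis, so $AD$ has full rank and the reduction of Section~\ref{SR} applies: the first line of \eqref{MMC} is equivalent, via \eqref{H2}--\eqref{onH}, to
\begin{align*}
\sum_{i,j=1}^{2}\frac{H_{ij}}{H_i H_j}\,a_{3i}a_{3j}c_{ij}\ \ge\ 0,
\end{align*}
i.e.\ to $\dfrac{H_{xx}}{H_x^2}\,\alpha^2 c_{11}+\dfrac{H_{yy}}{H_y^2}\,\beta^2 c_{22}+2\dfrac{H_{xy}}{H_xH_y}\,\alpha\beta\,c_{12}\ge 0$.

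Next I would choose $C$. Following Condition A1 and Lemma~3 of \cite{barthe1}, the hypotheses $|\alpha|+|\beta|\ge 1$ and $\bigl||\alpha|-|\beta|\bigr|\le 1$ guarantee the existence of a matrix $C=V^TV\ge 0$ with unit columns $v_1,v_2$ (so $c_{11}=c_{22}=1$) and $|\,\alpha v_1+\beta v_2\,|^2=1$, hence $\langle Cb,b\rangle=1>0$ and $c_{jj}=1>0$ as required by Corollary~\ref{preapp}. With this $C$ the displayed inequality reads
\begin{align*}
\alpha^2 \frac{H_{xx}}{H_x^2}+\beta^2\frac{H_{yy}}{H_y^2}+2\,\alpha\beta\,c_{12}\,\frac{H_{xy}}{H_xH_y}\ \ge\ 0 .
\end{align*}
Multiplying through by $H_x^2 H_y^2>0$ gives $\alpha^2 H_y^2 H_{xx}+\beta^2 H_x^2 H_{yy}+2\alpha\beta\,c_{12}\,H_xH_y H_{xy}\ge 0$. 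Comparing with \eqref{glavnoe}, which has coefficient $(1-\alpha^2-\beta^2)$ in front of $H_xH_yH_{xy}$, I need $2\alpha\beta\,c_{12}=1-\alpha^2-\beta^2$, i.e.\ $c_{12}=\dfrac{1-\alpha^2-\beta^2}{2\alpha\beta}$. I would then check that this value of $c_{12}$ is exactly the off-diagonal entry forced by $|\alpha v_1+\beta v_2|^2=\alpha^2+\beta^2+2\alpha\beta\langle v_1,v_2\rangle=1$, namely $\langle v_1,v_2\rangle=\dfrac{1-\alpha^2-\beta^2}{2\alpha\beta}$; and that $|c_{12}|\le 1$ (needed for $C\ge0$) is equivalent to the two Barthe constraints $|\alpha|+|\beta|\ge1$ and $\bigl||\alpha|-|\beta|\bigr|\le1$. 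This is the point at which the hypothesis is used in full, and where I expect the bookkeeping with signs of $\alpha,\beta$ to require a little care.

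Finally I would verify the condition at infinity \eqref{infinity0} for $B(u_1,u_2,u_3)=u_3-H(u_1,u_2)$ with the vectors $a_1=e_1$, $a_2=e_2$, $a_3=(\alpha,\beta)$: since no nonzero $x\in\mathbb{R}^2$ is simultaneously orthogonal to $a_1$ and $a_3$ (nor to $a_2$ and $a_3$), when $|x|\to\infty$ some coordinate $x_s\to\infty$, and because $(u_1,u_2)$ map into the bounded domain $\Omega$ the corresponding argument of $H$ is controlled—so the $\liminf$ of $V$ is $\ge B(0)=0$, exactly as in Example~1. With A1 (the matrix $C$ just built), A2 replaced here by the pointwise hypothesis \eqref{glavnoe} (which is \emph{equivalent} to the first line of \eqref{MMC} as shown above, so no extra concavity condition on a $\varphi$ is needed), and A3 the condition at infinity just checked, Corollary~\ref{preapp} applies and yields, for $t=1/2$ and $x=0$, precisely
\begin{align*}
\int_{\mathbb{R}^k}u_3\,d\gamma_k\ \ge\ H\Bigl(\int_{\mathbb{R}^k}u_1\,d\gamma_k,\ \int_{\mathbb{R}^k}u_2\,d\gamma_k\Bigr)
\end{align*}
whenever $u_3(\alpha x+\beta y)\ge H(u_1(x),u_2(y))$ for all $x,y$; iterating the one–dimensional statement coordinatewise handles $x,y\in\mathbb{R}^k$, completing the proof. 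The main obstacle is the algebra identifying the forced value of $c_{12}$ with $(1-\alpha^2-\beta^2)/(2\alpha\beta)$ and checking that $C\ge0$ precisely under the stated constraints on $\alpha,\beta$; everything else is an application of Corollary~\ref{preapp}.
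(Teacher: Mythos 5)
Your proposal is correct and follows essentially the same route as the paper's proof: reduce to Corollary~\ref{preapp} with $n=3$, $a_1=e_1$, $a_2=e_2$, $a_3=(\alpha,\beta)$, $c_{11}=c_{22}=1$ and $c_{12}=(1-\alpha^2-\beta^2)/(2\alpha\beta)$ (your value is the consistent one --- the paper's proof writes $(1-\alpha^2-\beta^2)/(4\alpha^2\beta^2)$, a typo contradicting its own computation elsewhere), so that \eqref{glavnoe} becomes \eqref{onH} and $C\ge 0$ is exactly the stated constraint on $\alpha,\beta$. The one imprecision is your check of the condition at infinity: $B(0)=-H(0,0)$ need not vanish, so, as the paper does, you must first use the pointwise domination $u_3(\alpha x+\beta y)\ge H(u_1(x),u_2(y))$ with $x,y$ large (where $u_1=u_2=0$) to conclude $u_3(z)\ge H(0,0)$ for every $z$, and then transfer this bound through the heat flow.
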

\begin{proof}
The corollary is immediate consequence of Corollary~\ref{preapp}. Indeed, take $n=3$ and $k=1$. Take $a_{3}=(\alpha, \beta)$. It is clear that we should choose $c_{11}=c_{22}=1$ and $c_{12}=\frac{1-\alpha^{2}-\beta^{2}}{4\alpha^{2}\beta^{2}}$. In this case condition $C\geq 0$ is the same as $|\alpha|+|\beta| \geq $ and $1 \geq ||\alpha| - |\beta||$. Inequality (\ref{glavnoe}) is the same as (\ref{onH}).

Now we left to check condition at infinity~\ref{infinity0}. Let $|(x,y)| \to \infty$. 
Suppose that  both $|x|, |y| \to \infty$. 
Since $u_{1}$ and $u_{2}$ are compactly supported this means that we need to ensure that the following inequality holds 
\begin{align*}
u_{3}(z, t) \geq H(0, 0)
\end{align*}
for all $z \in \mathbb{R}^{k}$. This follows from the pointwise inequality: since  $u_{3}(\alpha x + \beta y) \geq H(u_{1} (x), u_{2}(y))$ then taking $x,y$ sufficiently large we can make $\alpha x+ \beta y$ to be any point $z  \in \mathbb{R}^{k}$. Then from the pointwise inequality $u_{3}(y) \geq H(0,0)$  for all $y \in \mathbb{R}^{k}$ we obtain integral inequality  after integrating it  with respect to probability measure $p_{t}(z,y)dy$ of the heat semigroup $P_{t}$.  

Now consider the case when $|y| \to \infty$ and $|x|$ is bounded. In this case we need to show that 
\begin{align*}
\liminf_{|y| \to \infty }u_{3}(y,t)\geq H(u_{1}(x,t),0).
\end{align*}
Notice that pointwise inequality implies $\liminf_{|y|\to \infty} u_{3}(y) \geq H(u_{1}(x),0)$. Since $u_{1}(x,t)=u_{1}(x^{*})$ for some $x^{*}$  and 
$\liminf_{|y| \to \infty} u_{3}(y,t)\geq \liminf_{|y| \to \infty} u_{3}(y)$ we obtain the desired result.

 In order to obtain the corollary for the general rank case i.e., for arbitrary $k>1$ we can iterate the inequality as we did before in case of Ehrhard's inequality (or one can see Section~\ref{grank}).
\end{proof}

\subsection{Solving particular case of second type PDE}
In this section we will partially solve PDE (\ref{onH}) in the case $n=3$. assume that $a_{3}=(a,b)\in \mathbb{R}^{2}$. Let us require that for $C\geq 0$ we have  $c_{jj}=1$ and $\langle Ca_{3},a_{3}\rangle =1$. This can happen if and only if $|a|+|b|\geq 1$ and $||a|-|b||\leq 1$. In other words, this can be written as one condition:
\begin{align*}
1\geq \frac{(1-a^{2}-b^{2})^{2}}{4a^{2}b^{2}}.
\end{align*}
Condition $\langle Ca_{3},a_{3}\rangle=1$ implies that $c_{12}=\frac{1-a^{2}-b^{2}}{2ab}$. Therefore (\ref{onH}) takes the following form 
\begin{align}\label{diffusion}
H_{1}H_{2}H_{12}(1-a^{2}-b^{2})+a^{2}H_{2}^{2}H_{11}+b^{2}H_{1}^{2}H_{22}=0,
\end{align}
where $H=H(x,y)$. Let us show that the equation (\ref{diffusion}) can be reduced to second order linear differential equation with constant coefficients.

 In particular, we will see that if 
$|c_{12}|=1$ then the equation becomes parabolic equation and it reduces to heat equation, and if $|c_{12}|<1$ then the equation becomes elliptic equation which reduces to  Laplacian eigenfunctions.
\bigskip

Let $H_{1}=p, H_{2}=q$ (therefore $p_{y}=q_{x}$), then (\ref{diffusion}) becomes 
\begin{align*}
pqp_{y}(1-a^{2}-b^{2})+a^{2}q^{2}p_{x}+b^{2}p^{2}q_{y}=0.
\end{align*}

Assuming that the map $(x,y)\to (p(x,y),q(x,y))$ is locally invertible we obtain (exactly as we did in Section~\ref{BrSec}) 
\begin{align}\label{bra}
-pqx_{q}(1-a^{2}-b^{2})+a^{2}q^{2}y_{q}+b^{2}p^{2}x_{p}=0,
\end{align}
and $x_{q}=y_{p}$. 

We differentiate (\ref{bra}) with respect to $p$: 
\begin{align*}
-qx_{q}(1-a^{2}-b^{2})-pqx_{pq}(1-a^{2}-b^{2})+a^{2}q^{2}x_{qq}+2b^{2}px_{p}+b^{2}p^{2}x_{pp}=0.
\end{align*}
Let $x(p,q)=B(\ln p, \ln q)$ then  $p^{2}x_{pp}=B_{uu}-B_{u}$ and $q^{2}x_{qq}=B_{vv}-B_{v}$. Then 
\begin{align*}
-B_{v}(1-a^{2}-b^{2})-B_{uv}(1-a^{2}-b^{2})+a^{2}(B_{vv}-B_{v})+2b^{2}B_{u}+b^{2}(B_{uu}-B_{u})=0.
\end{align*}
Hence
\begin{align}\label{PDE1}
b^{2}B_{uu} +B_{uv}(1-a^{2}-b^{2})+a^{2}B_{vv}+B_{u}b^{2}+B_{v}(b^{2}-1)=0.
\end{align}
Thus we obtained second order linear PDE with constant coefficients. All we know about the numbers $a,b$ is that 
\begin{align*}
1 \geq \frac{(1-a^{2}-b^{2})^{2}}{4a^{2}b^{2}}
\end{align*}
So if $4a^{2}b^{2}= (1-a^{2}-b^{2})^{2}$ (which is the same as $|c_{12}|=1$) then the above equation corresponds to the parabolic equation, and if $4a^{2}b^{2}> (1-a^{2}-b^{2})^{2}$ then the above equation becomes elliptic equation. 

{\bf Parabolic equation: heat equation.}
Assume that $4a^{2}b^{2}= (1-a^{2}-b^{2})^{2}$, i.e., $b=1-a$.
Then our PDE becomes 
\begin{align*}
(1-a)^{2}B_{uu}+2a(1-a)B_{uv}+a^{2}B_{vv}+(1-a)^{2}B_{u}+a(a-2)B_{v}=0.
\end{align*}
Since this corresponds to  parabolic equation we can not make coefficient in front of $B_{uu}$ and $B_{vv}$ zero simultaneously. 
So we make the following change of variables $B(u,v)=M(\frac{a}{1-a}u-v,u)$. Then 
\begin{align*}
M_{22}+M_{2}+\frac{a(3-2a)}{(1-a)^{2}}M_{1}=0.
\end{align*}
The following technical lemma describes solutions of this PDE.
\begin{lemma}
If 
\begin{align*}
M_{22}+c_1M_2+c_2M_1=0
\end{align*}
and $c_2\neq 0$ then $M(x,y)=e^{-\frac{c_{1} y}{2}+\frac{c_{1}^{2}x}{4c_{2}}}W(\frac{-x}{c_{2}},y)$ where $W$ satisfies the heat equation $W_{22}=W_{1}$.
\end{lemma}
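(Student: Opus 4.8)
The plan is to exploit the standard reduction of a constant-coefficient, degenerate-parabolic second-order equation in two variables to the heat equation via an exponential gauge change followed by a linear rescaling of one variable. Concretely, I would make the ansatz $M(x,y)=e^{\mu x+\nu y}\,W(\lambda x,y)$ with constants $\mu,\nu,\lambda$ to be determined, substitute it into $M_{22}+c_{1}M_{2}+c_{2}M_{1}=0$, and then fix $\mu,\nu,\lambda$ so that all terms except $W_{22}$ and $W_{1}$ drop out.

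First I would compute, by the product and chain rules (with $W$ and its derivatives understood to be evaluated at $(\lambda x,y)$),
\begin{align*}
M_{1}&=e^{\mu x+\nu y}\big(\mu W+\lambda W_{1}\big),\\
M_{2}&=e^{\mu x+\nu y}\big(\nu W+W_{2}\big),\\
M_{22}&=e^{\mu x+\nu y}\big(\nu^{2}W+2\nu W_{2}+W_{22}\big).
\end{align*}
Plugging these into the equation and dividing by the nowhere-vanishing factor $e^{\mu x+\nu y}$ yields
\begin{align*}
W_{22}+(2\nu+c_{1})W_{2}+\big(\nu^{2}+c_{1}\nu+c_{2}\mu\big)W+c_{2}\lambda\,W_{1}=0 .
\end{align*}

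Next I would choose the constants so as to kill the three unwanted terms: take $\nu=-c_{1}/2$ to annihilate the $W_{2}$-term; since then $\nu^{2}+c_{1}\nu=-c_{1}^{2}/4$, take $\mu=c_{1}^{2}/(4c_{2})$ to annihilate the coefficient of $W$; and take $\lambda=-1/c_{2}$ so that the last coefficient becomes $-1$. With these choices the equation for $W$ is exactly $W_{22}=W_{1}$, and the prefactor and shift are $e^{\mu x+\nu y}=e^{-\frac{c_{1}y}{2}+\frac{c_{1}^{2}x}{4c_{2}}}$, $\lambda x=-x/c_{2}$, which is precisely the asserted form. Reading the argument in reverse, i.e. setting $W(\xi,\eta):=e^{\frac{c_{1}^{2}\xi}{4}+\frac{c_{1}\eta}{2}}M(-c_{2}\xi,\eta)$, this same computation shows that $W$ solves the heat equation whenever $M$ solves the given PDE, so the correspondence is a genuine equivalence.

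There is no real obstacle: the argument is bookkeeping of the chain rule together with solving three scalar equations for $\mu,\nu,\lambda$. The only point deserving attention is that determining $\mu$ and $\lambda$ each requires dividing by $c_{2}$ — which is exactly why the hypothesis $c_{2}\neq0$ is imposed; for $c_{2}=0$ the equation degenerates to an $x$-parametrized family of ODEs in $y$ and no such reduction to the heat equation is available.
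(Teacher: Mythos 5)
Your proof is correct: the paper states this as a technical lemma without supplying a proof, and your direct substitution of the ansatz $M(x,y)=e^{\mu x+\nu y}W(\lambda x,y)$, with $\nu=-c_{1}/2$, $\mu=c_{1}^{2}/(4c_{2})$, $\lambda=-1/c_{2}$, is exactly the intended (and standard) verification. The remark that the change of variables is invertible, so that $W(\xi,\eta)=e^{\frac{c_{1}^{2}\xi}{4}+\frac{c_{1}\eta}{2}}M(-c_{2}\xi,\eta)$ solves the heat equation whenever $M$ solves the given PDE, is precisely what the ``then $M(x,y)=\dots$'' formulation of the lemma requires.
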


{\bf Elliptic equation: Laplacian eigenfunctions.}
In order to get rid off mixed derivatives we make change of variables as follows
$$
B(u,v)=M\left(u\frac{(1-a^{2}-b^{2})}{2b^{2}}-v, u\frac{\sqrt{4a^{2}b^{2}-(1-a^{2}-b^{2})^{2}}}{2b^{2}}\right).
$$
Then the equation (\ref{PDE1}) becomes
\begin{align*}
\Delta M+M_{2}\frac{2b^{2}}{\sqrt{4a^{2}b^{2}-(1-a^{2}-b^{2})^{2}}}+M_{1}\frac{2b^{2}(3-3b^{2}-a^{2})}{4a^{2}b^{2}-(1-a^{2}-b^{2})^{2}}=0.
\end{align*}

The following technical lemma reduces the question to Laplacian eigenfunction problem: 
\begin{lemma}
If
\begin{align*}
M_{11}+M_{22}+c_{1}M_{1}+c_{2}M_{2}=0,
\end{align*}
then $M(x,y)=e^{-\frac{c_{1}x}{2}-\frac{c_{2}y}{2}}W(x,y)$ where $W$ is eigenfunction of the Laplacian, i.e., 
\begin{align*}
\Delta W = \left( \frac{c_{1}^{2}+c_{2}^{2}}{4}\right) W.
\end{align*} 
\end{lemma}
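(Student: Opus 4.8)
The plan is to eliminate the first-order terms by an exponential (gauge) substitution, which is the standard device for reducing a constant-coefficient second order operator without mixed derivatives to Laplace-eigenvalue form. Concretely, I would set
\begin{align*}
M(x,y) = e^{-\frac{c_{1}x}{2} - \frac{c_{2}y}{2}}\, W(x,y),
\end{align*}
so that it suffices to show the resulting $W$ satisfies $\Delta W = \frac{c_{1}^{2}+c_{2}^{2}}{4}\,W$.

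The single computational step is to differentiate and collect terms. Writing $E := e^{-\frac{c_{1}x}{2} - \frac{c_{2}y}{2}}$, the product rule gives $M_{1} = E\,(W_{1} - \tfrac{c_{1}}{2}W)$ and hence $M_{11} = E\,(W_{11} - c_{1}W_{1} + \tfrac{c_{1}^{2}}{4}W)$; symmetrically $M_{22} = E\,(W_{22} - c_{2}W_{2} + \tfrac{c_{2}^{2}}{4}W)$, while $c_{1}M_{1} = E\,(c_{1}W_{1} - \tfrac{c_{1}^{2}}{2}W)$ and $c_{2}M_{2} = E\,(c_{2}W_{2} - \tfrac{c_{2}^{2}}{2}W)$. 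Adding these four expressions, the $\mp c_{1}W_{1}$ and $\mp c_{2}W_{2}$ terms cancel — which is exactly why the exponent must carry the factor $\tfrac12$ in front of $c_{1},c_{2}$ — and the zeroth-order coefficient collapses to $\tfrac{c_{1}^{2}+c_{2}^{2}}{4} - \tfrac{c_{1}^{2}+c_{2}^{2}}{2} = -\tfrac{c_{1}^{2}+c_{2}^{2}}{4}$. Therefore
\begin{align*}
0 = M_{11}+M_{22}+c_{1}M_{1}+c_{2}M_{2} = E\left(\Delta W - \frac{c_{1}^{2}+c_{2}^{2}}{4}\,W\right),
\end{align*}
and since $E$ never vanishes we conclude $\Delta W = \frac{c_{1}^{2}+c_{2}^{2}}{4}\,W$, as claimed. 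The substitution is a bijection of $C^{2}$ functions, so the argument is reversible and, conversely, every Laplacian eigenfunction with eigenvalue $\tfrac{c_{1}^{2}+c_{2}^{2}}{4}$ yields a solution $M = EW$ of the original equation.

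There is essentially no obstacle here; the only thing requiring care is the sign bookkeeping in the product-rule expansion and the observation that the specific choice of exponent (half the drift coefficients) is precisely what annihilates the first-order part. If one wanted to address regularity explicitly, one simply records that multiplication by the nowhere-vanishing smooth factor $E^{\pm 1}$ preserves $C^{2}$, so no smoothness is gained or lost in either direction.
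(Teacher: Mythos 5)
Your computation is correct, and it is exactly the standard gauge substitution the paper has in mind: the lemma is stated there as a "technical lemma" with the proof omitted, precisely because the verification is the direct product-rule expansion you carry out. Nothing further is needed.
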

\section{General rank case}\label{grank}
\subsection{Special initial data}
So far we were considering special initial datas of the form $f(x)=F(a\cdot x)$ where $x,a \in \mathbb{R}^{k}$. It is natural to consider the following initial datas as well $f(x)=F(xA)$ where $A$ is $k\times s$ matrix and $x \in \mathbb{R}^{k}$, and $F :\mathbb{R}^{s}\to \mathbb{R}$. Note that we are writing $xA$ instead of more usual notations $Ax$ only because we want to keep the same notations as above $a\cdot x=xa$ where $a$ was column and $x$ was a row. 

For these initial datas absolutely nothing changes except we will work with larger matrices. Let us briefly formulate all results and leave the details. For a symmetric matrix $Q=\{q_{ij}\}$ we set $L_{Q}=\sum q_{ij} \frac{\partial^{2}}{\partial x_{i} \partial x_{j}}$. Corresponding semigroup will be denoted by $P^{Q}_{t}$. 
Further everywhere $C$ is symmetric $k\times k$ matrix. 
Analog of $1D$ heat flow is (see Section~\ref{spid})
\begin{align*}
\frac{\partial }{\partial t} U(y,t)=L_{A^{*}CA} U(y,t), \quad y \in \mathbb{R}^{s}, \quad U(y,0)=F(y). 
\end{align*}
Then 
\begin{align*}
P_{t}^{C}f(x)=U(xA,t)=\int_{\mathbb{R}^{s}}F(xA+(2tA^{*}CA)^{1/2}y)d\gamma_{s}(y). 
\end{align*}
So in order the expressions to be justified we only need to require $A^{*}CA>0$ but we do not need  $C>0$. Note that 
\begin{align*}
(L_{C}-\partial_{t})U(xA,t)=(L_{A^{*}CA}-\partial_{t})U(y,t)|_{y=xA}=0.
\end{align*}

 Therefore further we will be using sometimes the notation $P_{t}^{C}f(x)$ even though $C$ is not necessarily positive however we will assume that $A^{*}CA>0$. 
Note that 
\begin{align*}
\nabla P_{t}^{C}f(x)=A(\nabla_{y}U(y,t)|_{y=xA})^{T}.
\end{align*}
\subsection{First type of Bellman PDE for the general rank case.}\label{gsec1}
Let $A_{1},\ldots, A_{n}$ be matrices such tat $A_{j}$ is $k\times k_{j}$ size and let $A=(A_{1},\ldots, A_{n})$ be $k\times (k_{1}+\ldots+ k_{n})$ size. Let $B: \Omega\subset\mathbb{R}^{k} \to \mathbb{R}$ be smooth function on some rectangular domain $\Omega$. Take any $k\times k$ symmetric matrix  $C>0$. Let $u_{j} :\mathbb{R}^{k_{j}}\to \mathbb{R}$ be smooth compactly supported functions, and let $\vec{u}(x)=(u_{1}(xA_{1}),\ldots, u_{n}(xA_{n})): \mathbb{R}^{k} \to \Omega$. 
\begin{theorem}\label{gpde1}
The following conditions are equivalent:
\begin{itemize}
\item[(i)] $A^{*}CA\bullet \Hess B \leq 0$ on $\Omega$. 
\item[(ii)] $(P_{t}^{C}B(\vec{u}))(x)\leq B((P_{t}^{C}\vec{u})(x))$ for all $t\geq 0$, $x\in \mathbb{R}^{k}$ and $u_{j}$.
\item[(iii)] $(P_{t}^{C}B(\vec{u}))(x)\leq B((P_{t}^{C}\vec{u})(x))$ for $t=1/2$, $x=0$ and for all  $u_{j}$.
\end{itemize}
\end{theorem}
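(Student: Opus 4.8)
The plan is to repeat, in the general-rank language, the argument used for Theorem~\ref{gMC} in Section~\ref{proof_gMC}. It suffices to establish the cycle (i)$\Rightarrow$(ii)$\Rightarrow$(iii)$\Rightarrow$(i); the implication (ii)$\Rightarrow$(iii) is trivial (specialize $t=1/2$, $x=0$), so the content lies in (i)$\Rightarrow$(ii) and in (iii)$\Rightarrow$(i).

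\emph{From (i) to (ii).} As before, introduce
\[
V(x,t):=B\big((P_t^C\vec u)(x)\big)-\big(P_t^C B(\vec u)\big)(x),\qquad x\in\mathbb{R}^k,\ t\ge0,
\]
so that $V(\cdot,0)\equiv0$. The term $P_t^C B(\vec u)$ is killed by $L_C-\partial_t$ by definition of the semigroup, hence it is enough to apply $L_C-\partial_t$ to the first term. Writing $(P_t^C u_j)(xA_j)=U_j(xA_j,t)$ with $\partial_tU_j=L_{A_j^*CA_j}U_j$, the formulas of Section~\ref{grank} give $\nabla_x\big[U_j(xA_j,t)\big]=A_j\,\nabla_yU_j(xA_j,t)$ and $L_C\big[U_j(xA_j,t)\big]=(L_{A_j^*CA_j}U_j)(xA_j,t)$. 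Substituting into the chain rule, the second-order-in-$U_j$ contributions cancel against $\partial_tV$ exactly as in \eqref{DeltaBof}, leaving
\[
(L_C-\partial_t)V(x,t)=\big\langle(A^*CA\bullet\Hess B)\big((P_t^C\vec u)(x)\big)\,g,\,g\big\rangle,
\]
where $g=g(x,t)=\big(\nabla_yU_1(xA_1,t),\dots,\nabla_yU_n(xA_n,t)\big)$ is the block gradient and $A^*CA\bullet\Hess B$ is the $(\sum_j k_j)\times(\sum_j k_j)$ matrix whose $(m,m')$ block equals $B_{mm'}A_m^*CA_{m'}$ (for $k_j=1$ this is the usual matrix $\langle Ca_i,a_j\rangle B_{ij}$). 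By (i) the right-hand side is $\le0$, so $V$ satisfies $(L_C-\partial_t)V\le0$ with zero initial data, and the minimum principle of Section~\ref{proof_gMC} (the $-Me^{\lambda|x|^2}$ growth bound it requires being available since $B$ and the $u_j$ are bounded, cf. Section~\ref{PrLe}) yields $V\ge0$, which is (ii).

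\emph{From (iii) to (i).} First upgrade (iii) to (ii) by the dilation--translation device of the rank-1 case: given a target $(v,t_0)$, apply (iii) to the functions $\tilde u_j(y):=u_j(vA_j+\sqrt{2t_0}\,y)$, $y\in\mathbb{R}^{k_j}$; a linear change of variables in the Gaussian integrals turns (iii) for the $\tilde u_j$ at $(0,1/2)$ into (ii) for the $u_j$ at $(v,t_0)$. With (ii) available we have $V(x,t)\ge0=V(x,0)$, hence $\partial_tV(0,0)\ge0$, and evaluating the identity for $(L_C-\partial_t)V$ above at $t=0$, $x=0$ gives
\[
0\le\partial_tV(0,0)=-\big\langle(A^*CA\bullet\Hess B)(\vec u(0))\,g_0,\,g_0\big\rangle,\qquad g_0=\big(\nabla u_1(0),\dots,\nabla u_n(0)\big).
\]
Since $u_j(0)\in\mathbb{R}$ and $\nabla u_j(0)\in\mathbb{R}^{k_j}$ may be prescribed freely and independently over $j$, this yields $\langle(A^*CA\bullet\Hess B)(p)\,g_0,g_0\rangle\le0$ for every $p\in\Omega$ and every block vector $g_0$, i.e., (i).

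\emph{Main difficulty.} I expect the only genuine obstacle to be the bookkeeping in the chain-rule computation that produces the identity for $(L_C-\partial_t)V$: the rank-1 proof conceals this structure because each $\nabla_yU_j$ is a scalar, whereas here one must track the $k_j$-dimensional gradients and the block Schur product $A^*CA\bullet\Hess B$ with care. The remaining points — legitimacy of the minimum principle and the interchange of $\lim_{t\to0^+}$ with differentiation — go through verbatim as in the rank-1 case under the standing smoothness and compact-support assumptions on the $u_j$.
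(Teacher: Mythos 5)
Your proof is correct and follows exactly the route the paper intends: the paper itself proves Theorem~\ref{gpde1} only implicitly, by declaring that ``absolutely nothing changes'' from the rank-1 argument of Theorem~\ref{gMC}, and your write-up is a faithful (and correctly bookkept) general-rank transcription of that argument — the caloric identity $(L_C-\partial_t)V=\langle(A^*CA\bullet\Hess B)g,g\rangle$ with block gradients, the minimum principle for (i)$\Rightarrow$(ii), the shift–dilation upgrade of (iii) to (ii), and the $t\to0^+$ differentiation with freely prescribed $u_j(0)$ and $\nabla u_j(0)$ for the converse.
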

Here $A^{*}CA\bullet \Hess B$ denotes $(\sum k_{j})\times (\sum k_{j})$  matrix $\{ A_{i}^{*}CA_{j}\partial_{ij}B\}_{i,j=1}^{n}$ i.e., $A^{*}CA\bullet \Hess B$ is constructed by the bloks $A_{i}^{*}CA_{j}\partial_{ij}B$. Note that if $C$ and $A^{*}_{j}A_{j}$ are  identity matrices then  condition (iii) of Theorem~\ref{gpde1} takes the form 
\begin{align*}
\int_{\mathbb{R}^{k}}B(u_{1}(xA_{1}),\ldots, u_{n}(xA_{n}))d\gamma_{k}(x)\leq B\left(\int_{\mathbb{R}^{k_{1}}}u_{1}(x) d\gamma_{k_{1}}(x),\ldots, \int_{\mathbb{R}^{k_{n}}}u_{n}(x) d\gamma_{k_{n}}(x) \right).
\end{align*}
\subsection{Second type of Bellman PDE for the general rank case.}
We use the same notations as in  the previous section except instead of $C>0$ we only assume that $C\geq 0$ and $A_{j}^{*}CA_{j}>0$. 
Let $T=(B_{1}A_{1},\ldots, B_{n}A_{n})$ be $k\times(k_{1}+\ldots+k_{n})$ matrix, where $B_{j}=\partial_{j} B$. 
\begin{theorem}
Assume $P_{\ker T}(A^{*}CA\bullet \Hess B)P_{\ker T}\leq 0$.  Then 
\begin{align*}
\text{if}\quad &B((P_{t}^{C}\vec{u})(x))\geq 0\quad \text{for} \quad t=0\quad \text{and}\quad  \forall x \in \mathbb{R}^{k},\\
\text{then}\quad &B((P_{t}^{C}\vec{u})(x))\geq 0\quad \text{for} \quad t\geq 0 \quad \text{and}\quad  \forall x \in \mathbb{R}^{k},
\end{align*}
provided that condition at infinity holds:
\begin{align*}
\liminf_{|x| \to \infty}\inf_{u_{j}} B\left(\frac{u_{1}}{1+|xA_{1}|},\ldots, \frac{u_{n}}{1+|xA_{n}|}\right)\geq 0.
\end{align*} 
\end{theorem}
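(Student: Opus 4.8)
The plan is to mirror exactly the structure of the rank-1 argument from Theorem~\ref{MMC_hill}, parts (i) and (ii), but carried through with block matrices instead of scalar coordinates. First I would set $V(x,t):=B((P_t^C\vec u)(x))$, observe that by the remark in Section~\ref{grank} each component $u_j(xA_j+(2tA_j^*CA_j)^{1/2}y)$ is the solution of a modified heat equation with generator $L_{A_j^*CA_j}$ in the auxiliary variable, and compute the caloric defect directly by the chain rule:
\begin{align*}
\Big(L_C-\frac{\partial}{\partial t}\Big)V(x,t)=\langle (A^*CA\bullet\Hess B)(\vec u(x,t))\,\nabla\vec u,\nabla\vec u\rangle,
\end{align*}
where $\nabla\vec u$ is the appropriate $(k_1+\dots+k_n)\times k$ collection of gradients $A_j^T(\nabla_y u_j)^T$; this is the exact analogue of \eqref{DeltaBof}/\eqref{LCBof} and uses only $L_{A_j^*CA_j}u_j=\partial_t u_j$, not $C\ge0$.

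The second step is the key algebraic point: at any point $(x_0,t_0)$ where $V$ has a local minimum in $x$, one has $\nabla_x V=0$, and $\nabla_x V=0$ says precisely $\sum_j B_j\,A_j^T(\nabla_y u_j)^T=0$, i.e. the stacked vector $\nabla\vec u$ lies in $\ker T$ where $T=(B_1A_1,\dots,B_nA_n)$. Hence at such a point $\nabla\vec u=P_{\ker T}\nabla\vec u$, so
\begin{align*}
\Big(L_C-\frac{\partial}{\partial t}\Big)V(x_0,t_0)=\langle P_{\ker T}(A^*CA\bullet\Hess B)P_{\ker T}\,\nabla\vec u,\nabla\vec u\rangle\le0
\end{align*}
by the hypothesis $P_{\ker T}(A^*CA\bullet\Hess B)P_{\ker T}\le0$. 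Thus $V$ has the hill property \eqref{hill} with this $C\ge0$ — this is the general-rank version of part (ii) of Theorem~\ref{MMC_hill}, and the only thing that needs care is bookkeeping the block structure of $A^*CA\bullet\Hess B=\{A_i^*CA_j\,\partial_{ij}B\}$ and confirming that $\nabla_xV=0$ translates into membership in $\ker T$ and not merely into some larger subspace.

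With the hill property in hand, the minimum principle follows verbatim from part (i) of Theorem~\ref{MMC_hill}: introduce $V_\eps=V+\eps t$, note that at an interior local minimum one would get $\partial_t V(x_0,t_0)=-\eps<0$ while the hill property plus $L_CV(x_0,t_0)=\tr(C\,\Hess_xV)\ge0$ (here $C\ge0$ and $\Hess_xV\ge0$) forces $\partial_t V(x_0,t_0)\ge0$, a contradiction; so $V_\eps$ attains no interior local minimum in $\R^k\times(0,T]$, and then the stated condition at infinity (which plays the role of \eqref{infinity0}, with the weights $1+|xA_j|$) together with $V(\cdot,0)\ge0$ rules out any negative value of $V$, letting $\eps\to0$. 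I expect the main obstacle to be purely notational rather than conceptual: verifying that the block expression $A^*CA\bullet\Hess B$ transforms correctly under the chain rule and that $\ker T$ is exactly the constraint set cut out by $\nabla_xV=0$; once that identification is clean, the rest is a direct transcription of the rank-1 proof, and one should also remark (as in the discussion after Corollary~\ref{preapp}) that the condition-at-infinity hypothesis is the natural general-rank analogue of \eqref{one}.
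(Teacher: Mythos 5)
Your proposal is correct and is exactly the argument the paper intends: the paper omits the proof of this general-rank theorem, stating that ``absolutely nothing changes'' from the rank-1 case, and your transcription of Theorem~\ref{MMC_hill} (i)--(ii) with the block Hessian $\{A_i^*CA_j\,\partial_{ij}B\}$ and the identification $\nabla_xV=0\Leftrightarrow$ the stacked gradient lies in $\ker T$ is precisely the intended proof. (Only a notational slip: the relevant vector is the stacked $\nabla_yU_j\in\R^{k_j}$ with $\nabla_xV=\sum_jB_jA_j(\nabla_yU_j)^T=T\,g$, not $A_j^T(\nabla_yu_j)^T$, but your argument uses it correctly.)
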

\subsection{Applications tensorizes}
We remind that Borell's Gaussian noise stability (see Section~\ref{BGNS})  and hypercontractivity of Ornstein--Uhlenbeck (see Section~\ref{hyper}) were consequences of inequality (\ref{fg1}) which in turn is equivalent to PDE (\ref{Bryant}). Let us show that the same function implies these results in arbitrary dimension. Namely it is enough to show that if $B$ satisfies (\ref{Bryant}) then 
\begin{align*}
\int_{\mathbb{R}^{2n}}B(f(x),g(px+\sqrt{1-p^{2}}y))d\gamma_{2}(x,y)\leq B\left( \int_{\mathbb{R}^{n}}f(x)d\gamma_{n}(x),\int_{\mathbb{R}^{n}}g(x)d\gamma_{n}(x)\right).
\end{align*}
Indeed, we will apply Theorem~\ref{gpde1} for $A_{1}=(I_{n\times n},0_{n\times n})^{T}$, $A_{2}=(pI_{n\times n}, \sqrt{1-p^{2}} I_{n\times n})^{T}$ and $C=I_{n\times n}$. Here $I_{n\times n}$ is $n\times n$ identity matrix and $0_{n\times n}$ is $n \times n$ zero matrix. Then 
\begin{align*}
A_{1}^{*}A_{1}=A_{2}^{*}A_{2}=I_{n\times n} \quad \text{and}\quad  A_{1}^{*}A_{2}=A_{2}^{*}A_{1}=pI_{n\times n}.
\end{align*}
Therefore condition $A^{*}CA \bullet \Hess B\leq 0$ becomes 
\begin{align*}
            \left( {\begin{array}{cc}
             B_{11}  & pB_{12}  \\
             pB_{12}  & B_{22}               
                \end{array} } \right)\otimes I_{n} \leq 0, 
\end{align*}
and this is equivalent to (\ref{Bryant})

\bigskip
\bigskip

\section{Short review of some classical isoperimetric inequalities}\label{review}
\subsection*{Brunn--Minkowski and isoperimetric inequalities}
Let $A$ and $B$ be nonempty compact subsets of $\mathbb{R}^{n}$. 
\begin{thmm}
The following sharp Brunn--Minkowski inequality holds 
\begin{align*}
|A+B|^{1/n}\geq |A|^{1/n}+|B|^{1/n},
\end{align*}
where $n\geq 1$ and $|A|$ denotes Lebesgue measure of the set $A$. 
\end{thmm}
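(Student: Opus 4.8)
\emph{Proof plan.} The plan is to obtain the Brunn--Minkowski inequality as a corollary of the Prekopa--Leindler inequality proved in Section~\ref{Er}. Specializing the functional form there to two functions, to $\Phi(x)=e^{x}$, to weights $b_{1}=1-\lambda$ and $b_{2}=\lambda$ with $\lambda\in(0,1)$ (so $b_{1}+b_{2}=1$), and to ambient dimension $m=n$, one has: for bounded, smooth, compactly supported nonnegative $f,g,h$ on $\R^{n}$,
\[
h\big((1-\lambda)x+\lambda y\big)\ \ge\ f(x)^{1-\lambda}g(y)^{\lambda}\ \ \text{for all }x,y\in\R^{n}
\ \Longrightarrow\
\int_{\R^{n}}h\,d\gamma_{n}\ \ge\ \Big(\int_{\R^{n}}f\,d\gamma_{n}\Big)^{1-\lambda}\Big(\int_{\R^{n}}g\,d\gamma_{n}\Big)^{\lambda},
\]
the condition at infinity being automatic for compactly supported $f,g$ (see condition~A3 in Section~\ref{Er}).

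The first step is to pass from this Gaussian--weighted inequality to the classical Lebesgue form of Prekopa--Leindler: for nonnegative compactly supported $F,G,H$ on $\R^{n}$,
\[
H\big((1-\lambda)x+\lambda y\big)\ \ge\ F(x)^{1-\lambda}G(y)^{\lambda}\ \ \forall\,x,y
\ \Longrightarrow\
\int_{\R^{n}}H\,dx\ \ge\ \Big(\int_{\R^{n}}F\,dx\Big)^{1-\lambda}\Big(\int_{\R^{n}}G\,dx\Big)^{\lambda}.
\]
I would prove this by applying the Gaussian inequality to the dilates $f_{\delta}(x):=F(x/\delta)$, $g_{\delta}(x):=G(x/\delta)$, $h_{\delta}(x):=H(x/\delta)$ (after a preliminary smooth approximation): the pointwise hypothesis for $F,G,H$ is inherited verbatim by $f_{\delta},g_{\delta},h_{\delta}$ upon replacing $x,y$ by $x/\delta,y/\delta$, and by dominated convergence
\[
\int_{\R^{n}}F(x/\delta)\,d\gamma_{n}(x)=(2\pi)^{-n/2}\delta^{n}\!\int_{\R^{n}}F(u)\,e^{-\delta^{2}|u|^{2}/2}\,du=(2\pi)^{-n/2}\delta^{n}\Big(\int_{\R^{n}}F(u)\,du+o(1)\Big)\quad(\delta\to0^{+}).
\]
Dividing the resulting inequality by the common factor $(2\pi)^{-n/2}\delta^{n}$ and letting $\delta\to0$ yields the Lebesgue form; the reduction to smooth test functions is handled by approximating $F,G,H$ (in our application, indicators of compact sets) from inside by smooth compactly supported functions and invoking outer regularity of Lebesgue measure on the compact set $A+B$.

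Now let $A,B\subset\R^{n}$ be nonempty compact; then $A+B$ is compact, hence measurable. If $|A|=0$ then $|A+B|\ge|a_{0}+B|=|B|$ for any $a_{0}\in A$, so the inequality holds (and symmetrically if $|B|=0$); assume henceforth $|A|,|B|>0$. Fix $\lambda\in(0,1)$ and apply Lebesgue Prekopa--Leindler with $F=\mathbf{1}_{A/(1-\lambda)}$, $G=\mathbf{1}_{B/\lambda}$, $H=\mathbf{1}_{A+B}$: the hypothesis holds because $x\in A/(1-\lambda)$ and $y\in B/\lambda$ force $(1-\lambda)x+\lambda y\in A+B$, so $H((1-\lambda)x+\lambda y)=1\ge F(x)^{1-\lambda}G(y)^{\lambda}$, while if $F(x)=0$ or $G(y)=0$ the right-hand side vanishes. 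This gives
\[
|A+B|\ \ge\ \Big(\frac{|A|}{(1-\lambda)^{n}}\Big)^{1-\lambda}\Big(\frac{|B|}{\lambda^{n}}\Big)^{\lambda}.
\]
Writing $\alpha:=|A|^{1/n}$, $\beta:=|B|^{1/n}$ and choosing $\lambda=\beta/(\alpha+\beta)$, $1-\lambda=\alpha/(\alpha+\beta)$ makes the right-hand side equal to $\big(\tfrac{\alpha}{1-\lambda}\big)^{n(1-\lambda)}\big(\tfrac{\beta}{\lambda}\big)^{n\lambda}=(\alpha+\beta)^{n}$, whence $|A+B|^{1/n}\ge\alpha+\beta=|A|^{1/n}+|B|^{1/n}$.

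All the analytic weight of the statement has already been spent inside the Bellman--PDE framework, in establishing Prekopa--Leindler; the remaining steps above are a scaling limit followed by an algebraic optimization. The only point that really needs care is therefore the transition from the smooth, Gaussian-weighted, compactly supported inequality of Section~\ref{Er} to the sharp Lebesgue inequality for indicator functions --- that is, simultaneously controlling the dilation limit $\delta\to0$, the smooth approximation of $\mathbf{1}_{A},\mathbf{1}_{B},\mathbf{1}_{A+B}$, and the condition at infinity. I expect this bookkeeping, rather than any conceptual difficulty, to be the main obstacle; in particular $n=1$ needs no special treatment, being already contained in one-dimensional Prekopa--Leindler.
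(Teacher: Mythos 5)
Your argument is correct and follows essentially the same route as the paper: Prekopa--Leindler applied to indicators of the dilated sets $A/(1-\lambda)$, $B/\lambda$ and of $A+B$ gives the multiplicative inequality $|A+B|\ge \bigl(|A|/(1-\lambda)^{n}\bigr)^{1-\lambda}\bigl(|B|/\lambda^{n}\bigr)^{\lambda}$, and optimizing in $\lambda$ (this is exactly the paper's substitution $U=A/\lambda$, $V=B/(1-\lambda)$ in \eqref{brunmm} followed by maximization over $\lambda$) yields $|A+B|^{1/n}\ge |A|^{1/n}+|B|^{1/n}$; your handling of the degenerate case $|A|=0$ by translation monotonicity is also fine. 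Where you go beyond the text is in making explicit the passage from the Gaussian-weighted inequality actually established in Section~\ref{Er} (with $\Phi(x)=e^{x}$ and integrals against $d\gamma_{n}$) to the Lebesgue form used here: the paper identifies the two without comment, yet the naive substitution absorbing the Gaussian density goes in the wrong direction, and your dilation argument $f_{\delta}(x)=F(x/\delta)$, $\delta\to 0^{+}$, with the common factor $(2\pi)^{-n/2}\delta^{n}$ cancelling precisely because $\sum b_{j}=1$, is the correct and standard bridge. One small repair is needed in your approximation step: ``approximating $\mathbf{1}_{A}$ from inside by smooth compactly supported functions'' fails when the compact set has empty interior (a continuous function dominated by $\mathbf{1}_{A}$ must then vanish identically). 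Instead, mollify outward: with $S_{\eps}$ the open $\eps$-neighbourhood of $S$, take $F=\mathbf{1}_{A/(1-\lambda)}*\rho_{\eps}$ and $G=\mathbf{1}_{B/\lambda}*\rho_{\eps}$, which are smooth, take values in $[0,1]$, preserve the integrals exactly, and are supported in $(A/(1-\lambda))_{\eps}$ and $(B/\lambda)_{\eps}$; take $H$ smooth with $\mathbf{1}_{(A+B)_{\eps}}\le H\le \mathbf{1}_{(A+B)_{2\eps}}$. Since $(1-\lambda)\bigl(A/(1-\lambda)\bigr)_{\eps}+\lambda\bigl(B/\lambda\bigr)_{\eps}=(A+B)_{\eps}$, the pointwise hypothesis of Prekopa--Leindler still holds, and $|(A+B)_{2\eps}|\to|A+B|$ as $\eps\to 0$ because $A+B$ is compact. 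With that adjustment the bookkeeping closes and the proof is complete.
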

The Brunn-Minkowski inequality is a consequence of  its multiplicative version: 
\begin{thmm}
Let $\lambda \in (0,1)$. Then for any compact measurable sets $U, V \subset \mathbb{R}^{n}$ we have 
\begin{align}\label{brunmm}
|\lambda U+(1-\lambda)V| \geq  |U|^{\lambda}|V|^{1-\lambda}.
\end{align}
\end{thmm}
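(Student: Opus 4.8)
The plan is to deduce \eqref{brunmm} from the Pr\'ekopa--Leindler inequality \emph{with respect to Lebesgue measure} by the classical indicator--function argument, and to obtain that Lebesgue form from the Gaussian functional Pr\'ekopa--Leindler inequality of Section~\ref{Er} by a scaling limit. So the proof will split into two parts: first upgrading "Gaussian Pr\'ekopa--Leindler" to "Lebesgue Pr\'ekopa--Leindler", then plugging in indicators.

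\textbf{Step 1 (Gaussian $\Rightarrow$ Lebesgue Pr\'ekopa--Leindler).} I would start from the $\varphi(x)=e^{x}$ case of the rescaled corollary of Section~\ref{Er} (the one just before Corollary~\ref{corgla}), where $(\log\varphi)'\equiv 1$. Taking $k=2$, $b_{1}=\lambda$, $b_{2}=1-\lambda$ (so $\sum b_{j}=1$), the required condition $|\sum b_{j}v_{j}|^{2}\ge\sum b_{j}|v_{j}|^{2}$ forces $v_{1}=v_{2}=v\neq 0$ by strict convexity of $|\cdot|^{2}$, and then holds with equality. That corollary then reads: if $u_{0},u_{1},u_{2}\ge 0$ are bounded and compactly supported (the smoothness hypothesis being removed by the usual mollification) and $u_{0}(\lambda x_{1}+(1-\lambda)x_{2})\ge u_{1}(x_{1})^{\lambda}u_{2}(x_{2})^{1-\lambda}$ for all $x_{1},x_{2}\in\R$, then
\[
\int_{\R}u_{0}(|v|y)\,d\gamma_{1}\ \ge\ \Big(\int_{\R}u_{1}(|v|y)\,d\gamma_{1}\Big)^{\lambda}\Big(\int_{\R}u_{2}(|v|y)\,d\gamma_{1}\Big)^{1-\lambda}.
\]
Now I would let $|v|=s\to\infty$: substituting $z=sy$ gives $\int_{\R}u(sy)\,d\gamma_{1}(y)=\tfrac{1}{s\sqrt{2\pi}}\int_{\R}u(z)e^{-z^{2}/2s^{2}}\,dz=\tfrac{1}{s\sqrt{2\pi}}\big(\int_{\R}u\,dz+o(1)\big)$ by dominated convergence. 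Inserting this for $u=u_{0},u_{1},u_{2}$ and using $\lambda+(1-\lambda)=1$, the factor $\tfrac{1}{s\sqrt{2\pi}}$ appears with total weight $1$ on each side and cancels; letting $s\to\infty$ yields the one--dimensional Lebesgue Pr\'ekopa--Leindler inequality $\int_{\R}u_{0}\,dz\ge(\int_{\R}u_{1}\,dz)^{\lambda}(\int_{\R}u_{2}\,dz)^{1-\lambda}$ (trivial if $\int u_{1}=0$ or $\int u_{2}=0$). Iterating coordinate by coordinate --- the same tensorisation used for the Gaussian case in Section~\ref{Er}, or read off directly from the general--rank statement in Section~\ref{grank} --- promotes this to: for nonnegative bounded compactly supported $f,g,h$ on $\R^{n}$,
\[
h(\lambda x+(1-\lambda)y)\ \ge\ f(x)^{\lambda}g(y)^{1-\lambda}\ \ \forall x,y\in\R^{n}\ \Longrightarrow\ \int_{\R^{n}}h\,dx\ \ge\ \Big(\int_{\R^{n}}f\,dx\Big)^{\lambda}\Big(\int_{\R^{n}}g\,dx\Big)^{1-\lambda}.
\]

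\textbf{Step 2 (indicators).} Given compact $U,V\subset\R^{n}$ and $\lambda\in(0,1)$, I would apply Step 1 to $f=\mathbf 1_{U}$, $g=\mathbf 1_{V}$, $h=\mathbf 1_{\lambda U+(1-\lambda)V}$; these are bounded with compact support since $U,V$ and hence $\lambda U+(1-\lambda)V$ are compact. If $x\in U$ and $y\in V$ then $\lambda x+(1-\lambda)y\in\lambda U+(1-\lambda)V$, so $h(\lambda x+(1-\lambda)y)=1=\mathbf 1_{U}(x)^{\lambda}\mathbf 1_{V}(y)^{1-\lambda}$; otherwise the right side is $0\le h$. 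Thus the hypothesis holds, and its conclusion is precisely $|\lambda U+(1-\lambda)V|\ge|U|^{\lambda}|V|^{1-\lambda}$ (trivial if $|U|$ or $|V|$ vanishes), which is \eqref{brunmm}.

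The only genuinely new content is Step 1 --- the passage from the Gaussian inequality of Section~\ref{Er} to its Lebesgue counterpart. I expect the delicate part there to be the limiting bookkeeping: justifying the dominated--convergence step for the rescaled Gaussians, and checking that the normalising constants cancel exactly because the weights sum to $1$, a feature special to the Pr\'ekopa--Leindler (as opposed to Ehrhard) regime. The tensorisation to $\R^{n}$ and the indicator reduction of Step 2 are classical and routine.
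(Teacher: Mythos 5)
Your proof is correct and takes the same route as the paper: the paper likewise deduces \eqref{brunmm} by applying the Pr\'ekopa--Leindler inequality to the indicators of $U$, $V$ and $\lambda U+(1-\lambda)V$, which is exactly your Step 2. Your Step 1 --- forcing $v_{1}=v_{2}=v$ in the rescaled corollary of Section~\ref{Er} with $\varphi=e^{x}$ and then letting $|v|=s\to\infty$ so that the normalising factors $\tfrac{1}{s\sqrt{2\pi}}$ cancel because $\lambda+(1-\lambda)=1$ --- is a correct and welcome filling-in of the passage from the Gaussian functional inequality actually established in Section~\ref{Er} to the Lebesgue Pr\'ekopa--Leindler, a step the paper asserts without spelling out.
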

Indeed, if one sets $U\lambda =A$ and $(1-\lambda)V=B$ then inequality (\ref{brunmm}) takes the form 
\begin{align}\label{multiplic}
|A+B| \geq  \frac{|A|^{\lambda}|B|^{1-\lambda}}{\lambda^{\lambda n}(1-\lambda)^{(1-\lambda)n}}.
\end{align}
By maximizing the right hand side of (\ref{multiplic}) over $\lambda \in (0,1)$ we obtain the Brunn--Minkowski inequality. 

Brunn--Minkowski inequality implies the classical isoperimetric inequality:
\begin{thmm}
Among all simple closed surfaces with given surface area, the sphere encloses a region of maximal volume. In other words 
\begin{align*}
|\partial A| \geq n |A|^{1-\frac{1}{n}}|B(0,1)|^{\frac{1}{n}}.
\end{align*}
Where $|\partial A|$ means surface area of the boundary of the body $A$. $|A|$ denotes volume of the body and $B(0,1)$ denotes the ball of radius $1$ at center $0$. 
\end{thmm}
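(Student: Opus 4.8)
The plan is to derive the classical isoperimetric inequality from the multiplicative (hence additive) Brunn--Minkowski inequality stated above, together with the description of surface area as a one-sided Minkowski content. First I would write, for a compact set $A\subset\R^n$ whose boundary is a simple closed $C^{1}$ surface,
\[
|\partial A| \;=\; \lim_{\eps\to 0^+}\frac{|A+\eps B(0,1)|-|A|}{\eps},
\]
where $A+\eps B(0,1)=\{a+\eps b:\,a\in A,\ b\in B(0,1)\}$ is the Minkowski sum, i.e.\ the closed $\eps$-neighbourhood of $A$. Proving this identity --- that the outer Minkowski content of $\partial A$ equals its $(n-1)$-dimensional area --- is the one genuinely non-elementary ingredient: for smooth boundaries it follows from the tube (Weyl) formula, and for general finite-perimeter sets from the structure theory of rectifiable sets; for convex $A$ it is elementary via Steiner's formula.

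Second, I would apply the Brunn--Minkowski inequality to $A$ and to the dilate $\eps B(0,1)$, using $|\eps B(0,1)|=\eps^{n}|B(0,1)|$:
\[
|A+\eps B(0,1)|^{1/n}\;\ge\;|A|^{1/n}+|\eps B(0,1)|^{1/n}\;=\;|A|^{1/n}+\eps\,|B(0,1)|^{1/n}.
\]
Raising to the $n$-th power and expanding by the binomial theorem,
\[
|A+\eps B(0,1)|\;\ge\;|A|+n\,|A|^{1-1/n}|B(0,1)|^{1/n}\,\eps+O(\eps^{2}).
\]
Subtracting $|A|$, dividing by $\eps>0$, and passing to the limit $\eps\to0^+$ in combination with the first display gives
\[
|\partial A|\;\ge\;n\,|A|^{1-1/n}|B(0,1)|^{1/n},
\]
which is the asserted inequality; equality forces equality in Brunn--Minkowski for $A$ and a ball, i.e.\ $A$ is a ball, recovering the extremality statement. (If $|A|=0$ the right-hand side vanishes and there is nothing to prove.)

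I expect the only real obstacle to be the first step. Once surface area is interpreted as Minkowski content, the inequality is an essentially one-line consequence of Brunn--Minkowski, and the binomial expansion and the limit are routine. The substantive point is fixing the regularity class for which the Minkowski-content identity holds and matching it to the informal phrase ``simple closed surface'', which is precisely where one invokes geometric measure theory (or Steiner's formula in the convex case).
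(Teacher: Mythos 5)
Your argument is exactly the paper's: the paper also writes $|A+B(0,\eps)|=|A|+\eps|\partial A|+O(\eps^{2})$, applies Brunn--Minkowski to $A$ and $\eps B(0,1)$, and passes to the limit $\eps\to 0^{+}$. You are merely more explicit about the (genuinely nontrivial) identification of surface area with outer Minkowski content, which the paper takes for granted in its sketch.
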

Indeed, let us sketch the proof: Since $|A+B(0,\varepsilon)| = |A|+\eps |\partial A| +O(\varepsilon^{2})$, we have 
\begin{align*}
|\partial A|=\lim_{\eps\to 0} \frac{|A+B(0,\varepsilon)|-|A|}{\eps}\geq \lim_{\eps \to 0} \frac{(|A|^{1/n}+|B(0,\eps)|^{1/n})^{n}-|A|}{\eps}=n|A|^{1-\frac{1}{n}}|B(0,1)|^{\frac{1}{n}}.\\
\end{align*}

For the possible references we refer the reader to \cite{Ball1, barthe2, Gard}
\subsection*{Sobolev inequality}
It is  known that the classical isoperimetric inequality is equivalent to its functional version, to  Sobolev inequality on $\mathbb{R}^{n}$ with optimal constant 
\begin{align}\label{sobolev}
\int_{\mathbb{R}^{n}}|\nabla f| \geq n | B(0,1)|^{\frac{1}{n}}\left( \int_{\mathbb{R}^{n}}|f|^{\frac{n}{n-1}}\right)^{1-\frac{1}{n}}.
\end{align}
 Indeed, testing (\ref{sobolev})  over characteristic functions   $f(x)={\bf 1}_{A}(x)$ we obtain implication in one direction. Opposite direction follows from Coarea formula: assume $f \geq 0$ is sufficiently nice compactly supported function. Then by coarea formula we have 
\begin{align*}
\int_{\mathbb{R}^{n}}|\nabla f| dx = \int_{0}^{\infty}|\{ x\, :\; f(x)=t\}|dt\geq n |B(0,1)|^{\frac{1}{n}}\int_{0}^{\infty}|\{ x\, :\; f(x)\geq t\}|^{1-\frac{1}{n}}dt.
\end{align*}
It is left to show that 
\begin{align*}
\left(\int_{0}^{\infty}|\{ x\, :\; f(x)\geq t\}|^{\frac{n-1}{n}}dt\right)^{\frac{n}{n-1}}\geq  \frac{n}{n-1}\int_{0}^{\infty}|\{x\, :\; f(x) \geq t\}| t^{\frac{1}{n-1}}dt
\end{align*}
This follows from the following observation 
\begin{align*}
F\left(\int_{0}^{\infty} \varphi \right)=\int_{0}^{\infty} \frac{d}{dt}F\left(\int_{0}^{t}\varphi \right)dt=\int_{0}^{\infty}F'\left(\int_{0}^{t}\varphi\right) \varphi dt\geq \int_{0}^{\infty}F'(t\varphi(t))\varphi(t)dt,
\end{align*}
where  $\varphi$ is decreasing and $F'$ is increasing ($F(t)=t^{\frac{n}{n-1}}$, $\varphi(t)=|\{ x\, :\; f(x)\geq t\}|^{\frac{n-1}{n}}$). So the claim follows.

\subsection*{Prekopa--Leindler inequality}
Multiplicative Brunn--Minkowski inequality follows from its functional version, so called Prekopa--Leindler inequality. 
\begin{thmm}
Let $h,f,g$ be positive measurable functions and $\lambda \in (0,1)$. If 
\begin{align}\label{domination}
h(\lambda x + (1-\lambda) y) \geq f(x)^{\lambda}g(y)^{1-\lambda}
\end{align}
Then 
\begin{align*}
\int_{\mathbb{R}^{n}}h \geq \left( \int_{\mathbb{R}^{n}}f\right)^{\lambda} \left(\int_{\mathbb{R}^{n}} g\right)^{1-\lambda}.
\end{align*}
\end{thmm}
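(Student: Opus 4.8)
\medskip

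The plan is to extract from the apparatus above the \emph{Gaussian} form of the inequality and then strip off the Gaussian weight by a dilation limit. By standard reductions one may first assume that $f,g,h$ are smooth, bounded, with $f,g$ compactly supported: replacing $h$ by $\min(h,M)\,\mathbf{1}_{K}$ for a large ball $K$ and a large constant $M\ge\|f\|_\infty^{\lambda}\|g\|_\infty^{1-\lambda}$ only decreases $\int h$ and preserves \eqref{domination}; mollifying, and truncating the values of $f,g$ from above, likewise preserves \eqref{domination} and, by monotone convergence, loses nothing in the limit; and the degenerate cases $\int f=0$ or $\int g=0$ (right side zero) and $\int f=\infty$ or $\int g=\infty$ (truncate, then send the truncation level to infinity) are dealt with separately. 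Note that $b_1:=\lambda$, $b_2:=1-\lambda$ obey $b_1+b_2=1$, the borderline case forced, in the language of Section~\ref{Er}, by $\sum b_j\ge1$ (condition A1) together with $\sum b_j\le1$ (condition A2 for $\varphi(x)=e^{x}$).

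First I would apply Corollary~\ref{corgla} with $k=n$, $\alpha=\lambda$, $\beta=1-\lambda$, $H(x,y)=x^{\lambda}y^{1-\lambda}$, and $\Omega$ a rectangle $(0,R_0)^{2}$ containing the ranges of $f$ and $g$ (so $0\in\mathrm{Cl}(\Omega)$ and $H$ extends continuously to $\partial\Omega$). Here $|\alpha|+|\beta|=1$ and $||\alpha|-|\beta||=|2\lambda-1|\le1$, and $H_x,H_y>0$ on $\Omega$. The crucial requirement \eqref{glavnoe} holds \emph{with equality}: using $1-\alpha^{2}-\beta^{2}=2\lambda(1-\lambda)$ one computes directly that its left side equals $(2-1-1)\,\lambda^{3}(1-\lambda)^{3}\,x^{3\lambda-2}y^{1-3\lambda}=0$ (the degenerate Monge--Amp\`ere case, as one expects for an extremal inequality). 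Since \eqref{domination} says precisely $h(\lambda x+(1-\lambda)y)\ge H(f(x),g(y))$ for all $x,y\in\mathbb R^{n}$, Corollary~\ref{corgla} yields the Gaussian Pr\'ekopa--Leindler inequality
$$
\int_{\mathbb R^{n}}h\,d\gamma_{n}\ \ge\ \Big(\int_{\mathbb R^{n}}f\,d\gamma_{n}\Big)^{\lambda}\Big(\int_{\mathbb R^{n}}g\,d\gamma_{n}\Big)^{1-\lambda}.
$$

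Next I would de-Gaussianize by dilation. For $R>0$ the functions $f_R(x):=f(x/R)$, $g_R(x):=g(x/R)$, $h_R(x):=h(x/R)$ again satisfy \eqref{domination} and all the standing hypotheses, so the displayed inequality applies to them. Since $\int_{\mathbb R^{n}}h_R\,d\gamma_n=R^{n}(2\pi)^{-n/2}\int_{\mathbb R^{n}}h(z)e^{-R^{2}|z|^{2}/2}\,dz$, and similarly for $f$ and $g$, and since $\lambda+(1-\lambda)=1$ makes the prefactors $R^{n}(2\pi)^{-n/2}$ cancel, dividing through by that prefactor and letting $R\to0^{+}$ (monotone convergence, using $f,g,h\ge0$) gives $\int h\ge(\int f)^{\lambda}(\int g)^{1-\lambda}$ for the reduced $f,g,h$; undoing the reductions of the first paragraph by monotone convergence completes the proof. (The passage to general dimension is already built into Corollary~\ref{corgla}, but one could equally run the whole argument in dimension $1$ and recover arbitrary $n$ by slicing and Fubini, which is the iteration alluded to in Section~\ref{Er}; the dilation step is unchanged.)

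The substantive new input is the verification of \eqref{glavnoe} and the one-line passage from the Gaussian statement to the Lebesgue one, both carried out above. The real work, as always with Pr\'ekopa--Leindler, lies in the ``soft'' reductions of the first paragraph: approximating an arbitrary triple of positive measurable functions obeying \eqref{domination} by smooth, bounded, compactly supported ones \emph{while keeping the pointwise domination} (mollification has to be arranged with a little care, or one approximates from below by lower semicontinuous functions), and checking that the regularity and domain hypotheses of Corollary~\ref{corgla}, as well as the condition at infinity \eqref{infinity0} (in the incarnation A3, which for $\varphi=e^{x}$ just says $f,g$ are bounded), survive these reductions. All of this is classical; one can follow the treatments in \cite{Gard, barthe2}.
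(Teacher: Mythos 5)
Your proof is correct, and its core is the same as the paper's: in Section~\ref{Er} the paper takes $\varphi(x)=e^{x}$, so that $H(x_{1},\dots,x_{k})=x_{1}^{b_{1}}\cdots x_{k}^{b_{k}}$ with $\sum b_{j}=1$ (forced by A1 together with A2), checks the second-type Bellman inequality \eqref{onH} (equivalently \eqref{glavnoe}, which you verify directly and correctly observe holds with equality), and invokes Corollary~\ref{preapp}/\ref{corgla}. The one genuine difference is that you carry out a step the paper leaves implicit: what the machinery of Section~\ref{Er} actually delivers is the \emph{Gaussian} statement $\int h\,d\gamma_{n}\ge(\int f\,d\gamma_{n})^{\lambda}(\int g\,d\gamma_{n})^{1-\lambda}$, whereas the theorem as stated is for Lebesgue measure, and the naive substitution of Gaussian weights into \eqref{domination} goes the wrong way (convexity of $|x|^{2}$ gives the opposite pointwise inequality). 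Your dilation argument — apply the Gaussian inequality to $f(\cdot/R),g(\cdot/R),h(\cdot/R)$, cancel the prefactor $R^{n}(2\pi)^{-n/2}$ using $\lambda+(1-\lambda)=1$, and let $R\to0^{+}$ by monotone convergence — is exactly the right way to de-Gaussianize, and it is a worthwhile addition; it is essentially the ``$t\to\infty$ in the Lebesgue energy \eqref{integralE}'' passage that the paper itself flags as ``not so easy.'' Your explicit acknowledgement that the approximation of arbitrary positive measurable $f,g,h$ by smooth compactly supported ones obeying \eqref{domination}, and the verification of the condition at infinity \eqref{infinity0}, are classical but not free is also fair: the paper's own treatment silently restricts to compactly supported smooth data, so you are not losing anything relative to it by citing the standard references for these reductions.
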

If one takes $h(x) = \bf{1}_{\lambda U+(1-\lambda)V}(x), f(x) = \bf{1}_{U}(x)$ and $g(x)=\bf{1}_{V}(x)$ then clearly the assumption (\ref{domination}) is satisfied and  one obtains multiplicative version of Brunn--Minkowski inequality.

Straightforward generalization of Prekopa--Leindler inequality takes the following form: 
\begin{thmm}
Let $f_{j} :  \mathbb{R}^{n} \to R_{+}$ be integrable functions, and let $\sum_{j=1}^{m}\lambda_{j}=1$, $0<\lambda_{j} < 1$. If 
\begin{align*}
h\left(\sum_{j=1}^{m}\lambda_{j} x_{j}\right)\geq \prod_{j=1}^{m} f(x_{j})^{\lambda_{j}},
\end{align*}
then 
\begin{align*}
\int_{\mathbb{R}^{n}}h \geq \prod_{j=1}^{m} \left(\int_{\mathbb{R}^{n}}f_{j} \right)^{\lambda_{j}}.
\end{align*}
\end{thmm}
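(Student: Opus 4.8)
The plan is to derive the multi-function Prékopa–Leindler inequality from the two-function version by induction on $m$, which is completely standard. First I would observe that for $m=1$ the statement is trivial and for $m=2$ it is precisely the Prékopa–Leindler inequality already stated above. For the inductive step, suppose the claim holds for $m-1$ functions, and assume we are given $h, f_1,\dots,f_m$ with $\sum_{j=1}^m \lambda_j = 1$, $0<\lambda_j<1$, satisfying $h\big(\sum_{j=1}^m \lambda_j x_j\big) \geq \prod_{j=1}^m f_j(x_j)^{\lambda_j}$. Set $\mu := \lambda_1 + \dots + \lambda_{m-1} = 1-\lambda_m \in (0,1)$ and define, for $z \in \mathbb{R}^n$,
\begin{align*}
g(z) := \sup\left\{\prod_{j=1}^{m-1} f_j(x_j)^{\lambda_j/\mu} \,:\, x_1,\dots,x_{m-1}\in\mathbb{R}^n,\ \sum_{j=1}^{m-1}\frac{\lambda_j}{\mu}x_j = z\right\}.
\end{align*}
The exponents $\lambda_j/\mu$ for $j=1,\dots,m-1$ are positive and sum to $1$, so this is exactly the data for the $(m-1)$-fold inequality, and by the inductive hypothesis $\int_{\mathbb{R}^n} g \geq \prod_{j=1}^{m-1}\big(\int_{\mathbb{R}^n} f_j\big)^{\lambda_j/\mu}$.

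Next I would check that the triple $(h, g, f_m)$ with weight $\mu$ (so $h(\mu z + (1-\mu)y) \geq g(z)^{\mu} f_m(y)^{1-\mu}$) satisfies the hypothesis of the two-function Prékopa–Leindler inequality. Indeed, fix $z,y$ and any representation $z = \sum_{j=1}^{m-1}\frac{\lambda_j}{\mu}x_j$; then $\mu z + (1-\mu)y = \sum_{j=1}^{m-1}\lambda_j x_j + \lambda_m y$, so the assumed domination gives
\begin{align*}
h(\mu z + (1-\mu) y) \geq \prod_{j=1}^{m-1} f_j(x_j)^{\lambda_j}\cdot f_m(y)^{\lambda_m} = \left(\prod_{j=1}^{m-1} f_j(x_j)^{\lambda_j/\mu}\right)^{\mu} f_m(y)^{\lambda_m}.
\end{align*}
Taking the supremum over all such representations of $z$ yields $h(\mu z + (1-\mu)y) \geq g(z)^{\mu} f_m(y)^{1-\mu}$, as required. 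Applying the two-function inequality then gives $\int h \geq \big(\int g\big)^{\mu}\big(\int f_m\big)^{1-\mu}$, and combining with the bound on $\int g$ from the inductive hypothesis,
\begin{align*}
\int_{\mathbb{R}^n} h \geq \left(\prod_{j=1}^{m-1}\Big(\int_{\mathbb{R}^n} f_j\Big)^{\lambda_j/\mu}\right)^{\mu}\left(\int_{\mathbb{R}^n} f_m\right)^{1-\mu} = \prod_{j=1}^{m}\left(\int_{\mathbb{R}^n} f_j\right)^{\lambda_j},
\end{align*}
which closes the induction.

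Alternatively — and perhaps more in the spirit of this paper — one can avoid the induction entirely and read the statement off the machinery of Section~\ref{Er}: the functional Prékopa–Leindler inequality in arbitrary dimension with $\ell$ functions is exactly the conclusion reached there with $\Phi(x)=e^x$, $b_j=\lambda_j$, $\sum b_j = 1$, after the observation (made in Condition A2) that $\varphi(x)=e^x$ gives equality in \eqref{logdev} precisely when $\sum b_j \le 1$, so that \eqref{onH} holds and Corollary~\ref{preapp} applies; the one-dimensional case is iterated $n$ times to reach $\mathbb{R}^n$. The main thing to be careful about in either route is measurability: the function $g$ defined by a supremum need not be Borel, so one works with the essential supremum or with upper/lower integrals and a standard approximation, exactly as in the classical proof — this is the only genuinely delicate point, and it is handled by the usual reduction to the case of continuous compactly supported $f_j$, which is already the setting in which Corollary~\ref{preapp} is stated.
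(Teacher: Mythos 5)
Your primary argument (induction on $m$ via the supremal convolution $g$) is correct, but it is not the route the paper takes: the paper obtains the $m$-function statement in one stroke from the Bellman machinery of Section~\ref{Er}, by checking that $H(x_1,\dots,x_m)=\prod_j x_j^{\lambda_j}$ with $a_n=(\lambda_1,\dots,\lambda_m)$ satisfies the partial differential inequality \eqref{onH} for a suitable $C\ge 0$ (this is the computation \eqref{prepre} with $\varphi(x)=e^x$, where \eqref{logdev} forces $\sum\lambda_j\le 1$ and \eqref{bartha} forces $\sum\lambda_j\ge 1$), and then invoking Corollary~\ref{preapp}; only the passage from $\mathbb{R}^1$ to $\mathbb{R}^n$ is done by iteration, not the passage from $2$ to $m$ functions. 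Your second, alternative paragraph is exactly this and matches the paper. The trade-off is worth noting: your induction is elementary and needs only the two-function case as a black box, but it imports the measurability problem for the supremal convolution (which you correctly flag and which must be handled by inner integrals or by a measurable selection, since the inductive hypothesis has to be applied to the possibly non-measurable $g$); the paper's route avoids any supremum and treats all $m$ simultaneously, at the price of the semigroup/hill-property machinery and the conditions A1--A3, which are verified only for smooth compactly supported data bounded away from the endpoints, so an approximation step is still needed to reach general integrable $f_j$. Both arguments are sound; yours is a genuinely different and more self-contained derivation of this particular corollary.
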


The above inequality can be treated as reverse to H\"older's inequality:
\begin{align*}
\int_{\mathbb{R}^{n}}\sup\left\{\prod_{j=1}^{m} f(x_{j})^{\lambda_{j}}\, :\; \sum x_{j}\lambda_{j}=z\right\}dz \geq \prod_{j=1}^{m} \left(\int_{\mathbb{R}^{n}}f_{j} \right)^{\lambda_{j}} \geq \prod_{j=1}^{m} \int_{\mathbb{R}^{n}}f_{j}(x_{j})^{\lambda_{j}}.
\end{align*}
where integral in the left hand side is understood as upper Lebesgue integral. 

Note that we proved Prekopa--Leindler inequality in Section~\ref{Er} when $\Phi(x)=e^{x}$ (see discussions given after the explanation of {\em Condition A2}). Basically the reason inequality holds is because the function $H(x_{1}, \ldots, x_{m})=\prod_{j=1}^{m} x_{j}^{\lambda_{j}}$ satisfies partial differential inequality (\ref{onH}) for appropriate choice of $C$ and $a_{n} = (\lambda_{1},\ldots,\lambda_{m})$. 

One of the other applications of Prekopa--Leindler inequality in probability is that: 
\begin{corr}
 If  $F(x,y) : \mathbb{R}^{n}\times \mathbb{R}^{m} \to \mathbb{R}^{+}$ is log-concave distribution i.e., 
\begin{align*}
F(\lambda u + (1-\lambda)v) \geq F(u)^{1-\lambda}F(v)^{\lambda}\quad \text{for all}\quad u, v \in \mathbb{R}^{n+m},
\end{align*}
then $H(x)=\int_{\mathbb{R}^{m}}F(x,y)dy$ is log-concave distribution. 
\end{corr}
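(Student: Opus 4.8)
The plan is to obtain the log-concavity of the marginal $H$ as a sliced application of the Prekopa--Leindler inequality, which was proved in Section~\ref{Er} in the form corresponding to $\Phi(x)=e^x$. The first coordinate $x\in\mathbb{R}^n$ will be treated as a frozen parameter, and Prekopa--Leindler will be applied in the $y$-variable on $\mathbb{R}^m$.

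First I would fix $x_1,x_2\in\mathbb{R}^n$ and $\lambda\in(0,1)$, put $x:=\lambda x_1+(1-\lambda)x_2$, and introduce the three nonnegative functions of $y\in\mathbb{R}^m$:
\[
h(y):=F(x,y),\qquad f(y):=F(x_1,y),\qquad g(y):=F(x_2,y).
\]
Since log-concavity of $F$ is unaffected by the relabeling $\lambda\leftrightarrow 1-\lambda$, the hypothesis may be used in the form $F(\lambda u+(1-\lambda)v)\ge F(u)^\lambda F(v)^{1-\lambda}$. Applying this with $u=(x_1,y_1)$, $v=(x_2,y_2)$ and using the identity $\lambda(x_1,y_1)+(1-\lambda)(x_2,y_2)=(x,\,\lambda y_1+(1-\lambda)y_2)$ gives, for all $y_1,y_2\in\mathbb{R}^m$,
\[
h(\lambda y_1+(1-\lambda)y_2)\ \ge\ f(y_1)^{\lambda}\,g(y_2)^{1-\lambda}.
\]
This is precisely the hypothesis of Prekopa--Leindler on $\mathbb{R}^m$, so that theorem yields
\[
H(x)=\int_{\mathbb{R}^m}h\ \ge\ \Big(\int_{\mathbb{R}^m}f\Big)^{\lambda}\Big(\int_{\mathbb{R}^m}g\Big)^{1-\lambda}=H(x_1)^{\lambda}H(x_2)^{1-\lambda},
\]
which is exactly the asserted log-concavity of $H$.

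The entire substance is contained in this one-line reduction; what remains is only measure-theoretic bookkeeping and the degenerate cases. One assumes $F$ measurable so that the slices $y\mapsto F(x,y)$ are measurable and $H$ is well defined with values in $[0,\infty]$; if $F$ is in addition integrable, Fubini makes the slices integrable for a.e.\ $x$ and the displayed chain applies verbatim. The cases where $H(x_1)$ or $H(x_2)$ vanishes make the inequality trivial, and the case where one of them is $+\infty$ only forces the right-hand side to $+\infty$ when the other factor is positive, which is again consistent with the (extended-real-valued) conclusion of Prekopa--Leindler. Thus there is no genuine obstacle beyond invoking Prekopa--Leindler correctly on the fibers.
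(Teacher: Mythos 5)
Your proof is correct and is exactly the argument the paper gives: the paper's one-sentence proof says the corollary "immediately follows from application of Prekopa--Leindler inequality to the functions $F(x,\lambda y_1+(1-\lambda)y_2)$, $F(x,y_1)$, $F(x,y_2)$," and your write-up is precisely that reduction carried out carefully (including the harmless swap of $\lambda$ and $1-\lambda$ in the stated log-concavity hypothesis). Nothing further is needed.
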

The corollary immediately follows from application of Prekopa--Leindler inequality to the functions $F(x,\lambda y_{1}+(1-\lambda)y_{2}), F(x, y_{1})$ and $F(x,y_{2})$. 

\subsection*{Borell--Brascamp--Lieb inequality}
We also mention Borell--Brascamp--Lieb inequality since it generalizes Prekopa--Leindler inequality 
\begin{thmm}
Let $h,f,g$ be nonnegative functions, $0<\lambda<1$ and $-\frac{1}{n}\leq p \leq \infty$. Suppose 
\begin{align*}
h(\lambda x+ (1-\lambda)y)\geq M_{p}(f(x),g(y),\lambda), 
\end{align*}
where 

\begin{align*}
M_{p}(a,b,\lambda):= (\lambda a^{p}+(1-\lambda)b^{p})^{1/p}, \quad M_{0} := (a,b,\lambda)= a^{\lambda}b^{1-\lambda}.
\end{align*}
Then 
\begin{align}\label{bbrin}
\int_{\mathbb{R}^{n}}h \geq M_{\frac{p}{np+1}}\left(\int_{\mathbb{R}^{n}}f, \int_{\mathbb{R}^{n}}g, \lambda \right).
\end{align}
\end{thmm}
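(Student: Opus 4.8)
The plan is to reduce the general $p$-mean statement to the already-established Prekopa--Leindler inequality (the case $p=0$), exactly in the spirit of how the homogeneous Brunn--Minkowski inequality was reduced to its multiplicative form earlier in the paper. I would first dispose of trivial cases: if either $\int f=0$ or $\int g=0$, the right-hand side $M_{p/(np+1)}$ vanishes (for the relevant range of the exponent), and the inequality is automatic; so assume $\int f,\int g>0$. Normalize by setting $\theta:=\left(\int_{\R^n}f\right)/\left(\int_{\R^n}f+\int_{\R^n}g\right)$ after an appropriate rescaling, or more directly renormalize $f,g$ to be probability densities: replace $f$ by $\bar f=f/\int f$ and $g$ by $\bar g=g/\int g$, and track how the hypothesis on $h$ transforms.

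The key step is the one-dimensional Borell--Brascamp--Lieb inequality, i.e.\ the case $n=1$. For $n=1$ one argues via the distribution functions: let $F(u)=\int_{-\infty}^{u}f$, $G(u)=\int_{-\infty}^{u}g$, define $u(s),v(s)$ by $F(u(s))=s\int f$, $G(v(s))=s\int g$, and set $w(s)=\lambda u(s)+(1-\lambda)v(s)$. Then $w$ is increasing, $w'(s)=\lambda u'(s)+(1-\lambda)v'(s)$, and by the hypothesis $h(w(s))\ge M_p(f(u(s)),g(v(s)),\lambda)$. Integrating $h$ along the curve $w(s)$, using $u'(s)=\int f / f(u(s))$, $v'(s)=\int g/g(v(s))$, and the superadditivity of the map $(a,b)\mapsto M_p(a,b,\lambda)\cdot(\lambda/a+(1-\lambda)/b)^{-1}$ — which is precisely the content of the $p\le 1$ reverse Minkowski / Hölder computation and is where the exponent $\frac{p}{np+1}$ is born for $n=1$ — yields $\int h\ge M_{p/(p+1)}(\int f,\int g,\lambda)$. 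This superadditivity inequality is the genuinely analytic heart of the matter and I expect it to be the main obstacle to write cleanly; it is a one-variable calculus lemma but must be handled with care about the sign of $p$ and the degenerate cases $p=0,\pm\infty$.

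Finally, the passage from $n=1$ to general $n$ is the standard Fubini induction: write $x=(x',x_n)$, $y=(y',y_n)$, freeze $x_n,y_n$ and let $z_n=\lambda x_n+(1-\lambda)y_n$; define $h_{z_n}(z')=h(z',z_n)$, $f_{x_n}(x')=f(x',x_n)$, $g_{y_n}(y')=g(y',y_n)$. The hypothesis gives $h_{z_n}(\lambda x'+(1-\lambda)y')\ge M_p(f_{x_n}(x'),g_{y_n}(y'),\lambda)$, so by the $(n-1)$-dimensional case $H(z_n):=\int_{\R^{n-1}}h_{z_n}\ge M_{p/((n-1)p+1)}(F(x_n),G(y_n),\lambda)$ where $F(x_n)=\int f_{x_n}$, $G(y_n)=\int g_{y_n}$; then apply the one-dimensional case to $H,F,G$ with exponent $p/((n-1)p+1)$, and check the arithmetic identity $\frac{q}{q+1}=\frac{p}{np+1}$ with $q=\frac{p}{(n-1)p+1}$, which closes the induction. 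For the connection to the methods of this paper, one can alternatively remark that in the case $p=0$ this is exactly the Prekopa--Leindler inequality proved in Section~\ref{Er} via the second-type Bellman PDE with $\Phi(x)=e^x$ and $H(x_1,\dots,x_m)=\prod x_j^{\lambda_j}$; the general $p$ follows from $p=0$ together with the elementary change of variables that turns a $p$-mean hypothesis into a $0$-mean (product) hypothesis after replacing $f$ by $f^p$ up to normalizing constants, so no new PDE is needed.
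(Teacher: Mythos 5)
Your main route --- induction on the dimension via Fubini combined with the one--dimensional transport argument through the distribution functions $F,G$ --- is the classical proof of Borell--Brascamp--Lieb and is correct in outline, but it is genuinely different from what the paper does. The paper does not prove the theorem in full generality: it checks that $H(x,y)=M_{p}(x,y,\lambda)$ satisfies the partial differential inequality \eqref{glavnoe} when $p\geq 1$, feeds this into Corollary~\ref{corgla} to obtain the \emph{stronger} conclusion \eqref{corbor} with $M_{p}$ rather than $M_{\frac{p}{np+1}}$ on the right, deduces \eqref{bbrin} from the elementary comparison $M_{p}\geq M_{\frac{p}{np+1}}$, and then explicitly states that the method is not known to reach the range $-\frac{1}{n}\leq p<1$ because \eqref{glavnoe} fails for $p<1$. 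So your argument covers the whole range of $p$ where the paper's does not, at the price of leaving the Bellman--PDE framework entirely; conversely, where the paper's argument applies it yields the strictly better inequality \eqref{corbor}. Two points in your one--dimensional step need care. First, the lemma you invoke is not a superadditivity statement but the H\"older--type product inequality $M_{p}(a,b,\lambda)\,M_{1}(A/a,B/b,\lambda)\geq M_{p/(p+1)}(A,B,\lambda)$, valid for $p\geq -1$; as written you have inverted the second factor. Second, the case $\int g=0$ is not ``automatic'': for $p>0$ the right-hand side of \eqref{bbrin} equals $\lambda^{(np+1)/p}\int f\neq 0$, and one must either adopt the convention $M_{p}(a,0,\lambda)=0$ in the hypothesis or run a short direct argument.

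Your closing remark, however, is not correct and should be dropped: the general case does not reduce to Prekopa--Leindler ($p=0$) by replacing $f$ with $f^{p}$ or by any similar substitution. For $p>0$ the hypothesis does imply the $p=0$ hypothesis (by the arithmetic--geometric mean inequality), but Prekopa--Leindler then only returns $\int h\geq M_{0}\left(\int f,\int g,\lambda\right)$, which is weaker than $M_{\frac{p}{np+1}}$; for $p<0$ the $p$-mean hypothesis does not even imply the product hypothesis. The exponent $\frac{p}{np+1}$ genuinely comes from the dimensional induction (your identity $\frac{q}{q+1}=\frac{p}{np+1}$ with $q=\frac{p}{(n-1)p+1}$), not from a change of variables applied to the $p=0$ case.
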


\bigskip 
Notice that $H(x,y)=M_{p}(x,y,\lambda)$ satisfies partial differential inequality (\ref{glavnoe}) for $p\geq 1$ (here $(\alpha, \beta)=(\lambda, 1-\lambda)$). Indeed, 
\begin{align*}
&(1-\alpha^{2}-\beta^{2})\frac{\partial H}{\partial x} \frac{\partial H}{\partial y} \frac{\partial^{2} H}{\partial x \partial y}+
\alpha^{2} \left( \frac{\partial H}{\partial y} \right)^{2}\frac{\partial^{2} H}{\partial x^{2}}+\beta^{2} \left(\frac{\partial H}{\partial x}\right)^{2}\frac{\partial^{2} H}{\partial y^{2}}=\\
&(p-1)\frac{\lambda(1-\lambda)(x^{p}-y^{p})^{2}}{(xy)^{p} H(x,y)}\geq 0
\end{align*}

Thus by Corollary~\ref{corgla} we obtain 
\begin{align}\label{corbor}
\int_{\mathbb{R}^{n}}h \geq M_{p}\left(\int_{\mathbb{R}^{n}}f, \int_{\mathbb{R}^{n}}g, \lambda \right).
\end{align}

 Also notice that $M_{p}(x,y,\lambda)\geq M_{\frac{p}{np+1}}(x,y,\lambda)$ for $x,y\geq 0$ and $-\frac{1}{n}<p <\infty$ (this is a direct computation: by homogeneity we can assume that $x=1$, and the rest follows by showing that the derivative of the function $f(y)=(\lambda+(1-\lambda)y^{p})^{1/p}- (\lambda +(1-\lambda)y^{\frac{p}{np+1}})^{\frac{np+1}{p}}$ has only one root $y=1$).

Thus inequality (\ref{corbor}) is better than (\ref{bbrin}), and hence  it implies Borell--Brascamp--Lieb inequality in case $p\geq 1$. 

In the case $-\frac{1}{n}\leq p \leq 1$ we do not know how to derive Borell--Brascamp--Lieb inequality by using Corollary~\ref{corgla}. The reason is because the inequality (\ref{glavnoe}) does not hold if $p<1$. 

\subsection*{Ehrhard's inequality}
The condition of Prekopa--Leindler type appears in Ehrhard's inequality (see~\cite{borell1, Latala1}): 
\begin{thmm}
Let $d\gamma(x)= \frac{e^{-|x|^{2}/2}}{(2\pi )^{n/2}} dx$ be the Gaussian measure. And let 
$\Phi(x)=\int_{-\infty}^{x}d\gamma$. Then for any measurable compact sets $A,B \subset \mathbb{R}^{n}$ and any numbers $\lambda, \mu \geq 0$, such that $\lambda+\mu\geq 1$ and $|\lambda-\mu|\leq 1$ we have 
\begin{align}\label{ehh}
\Phi^{-1}(|\lambda A + \mu B|_{\gamma})\geq \lambda \Phi^{-1}(|A|_{\gamma})+\mu\Phi^{-1}(|B|_{\gamma}),
\end{align}
where $|A|_{\gamma}$ denotes Gaussian measure of $A$ i.e., $|A|_{\gamma}=\int_{A}d\gamma$.
\end{thmm}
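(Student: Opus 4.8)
The plan is to read Ehrhard's inequality off the functional form of the second-type Bellman machinery --- concretely Corollary~\ref{corgla} --- applied with
\[
H(s,t):=\Phi\big(\lambda\,\Phi^{-1}(s)+\mu\,\Phi^{-1}(t)\big),\qquad \alpha=\lambda,\ \ \beta=\mu,
\]
and then to pass from functions to the sets $A,B$ by a neighbourhood--approximation argument. We may assume $\lambda,\mu>0$ (if $\lambda\mu=0$ the constraints $\lambda+\mu\ge1$, $|\lambda-\mu|\le1$ force the nonzero coefficient to be $1$ and there is nothing to prove) and $|A|_\gamma,|B|_\gamma>0$ (otherwise the right side is $-\infty$), noting $|A|_\gamma,|B|_\gamma<1$ automatically since $A,B$ are compact.

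\emph{Step 1 (the Bellman input).} First I would check that $H$ meets the hypotheses of Corollary~\ref{corgla}. One has $H_s,H_t>0$ on $(0,1)^2$. Since the Gaussian density $\varphi$ has linear logarithmic derivative $(\log\varphi)'(x)=-x$, formula \eqref{prepre} shows that the left side of \eqref{onH} vanishes identically --- equivalently, \eqref{glavnoe} holds for $H$ with equality --- once $C\ge0$ is taken with $c_{11}=c_{22}=1$ and $\langle Cb,b\rangle=1$, $b=(\lambda,\mu)$. Such a $C\ge0$ exists precisely when $\lambda+\mu\ge1$ and $|\lambda-\mu|\le1$, i.e.\ under \eqref{bartha} (equivalently $|c_{12}|=\frac{|1-\lambda^2-\mu^2|}{2\lambda\mu}\le1$); this is Lemma~3 of \cite{barthe1}. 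Hence Corollary~\ref{corgla}, applied with $k=n$, yields the functional Ehrhard inequality: for smooth compactly supported $u_1,u_2$ and smooth bounded $u_3\ge0$ with $(u_1,u_2):\mathbb R^n\to[0,1]^2$,
\[
u_3(\lambda x+\mu y)\ \ge\ H\big(u_1(x),u_2(y)\big)\ \ \forall x,y\in\mathbb R^n
\quad\Longrightarrow\quad
\int u_3\,d\gamma_n\ \ge\ H\!\Big(\!\int u_1\,d\gamma_n,\ \int u_2\,d\gamma_n\Big).
\]
The non-smoothness of $H$ along $\{s=0\}\cup\{t=0\}$ when $\lambda<1$ or $\mu<1$ is harmless: the Ornstein--Uhlenbeck flow in Corollary~\ref{corgla}'s proof pushes the arguments of $H$ into the open square for every $t>0$, and the condition at infinity \eqref{infinity0} only uses $H(0,\cdot)=H(\cdot,0)=0$, exactly as in Example~1.

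\emph{Step 2 (functions to sets).} For $m\in\mathbb N$ fix $\delta_m\downarrow0$, $L_m\uparrow\infty$, and set $\epsilon_m:=(\lambda+\mu)\delta_m$; write $A_\delta$ for the closed $\delta$-neighbourhood of $A$, etc. Choose smooth $\rho^A_m$ with $\mathbf{1}_{A}\le\rho^A_m\le\mathbf{1}_{A_{\delta_m}}$ and put $u_1:=\Phi(L_m)\rho^A_m$ (smooth, compactly supported, $[0,\Phi(L_m)]$-valued, $\equiv\Phi(L_m)$ on $A$), similarly $u_2$; and let $u_3$ be smooth, $[0,\Phi((\lambda+\mu)L_m)]$-valued, equal to $\Phi((\lambda+\mu)L_m)$ on $(\lambda A+\mu B)_{\epsilon_m}$ and to $0$ outside $(\lambda A+\mu B)_{2\epsilon_m}$. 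Then the pointwise hypothesis of Step~1 holds: if $u_1(x)=0$ or $u_2(y)=0$ then $H(u_1(x),u_2(y))=0\le u_3$; otherwise $x\in A_{\delta_m}$ and $y\in B_{\delta_m}$, so $\lambda x+\mu y\in\lambda A_{\delta_m}+\mu B_{\delta_m}\subset(\lambda A+\mu B)_{\epsilon_m}$ and
\[
u_3(\lambda x+\mu y)=\Phi\big((\lambda+\mu)L_m\big)=H\big(\Phi(L_m),\Phi(L_m)\big)\ge H\big(u_1(x),u_2(y)\big)
\]
by monotonicity of $H$ in each argument. Step~1 gives $\int u_3\,d\gamma_n\ge H(\int u_1\,d\gamma_n,\int u_2\,d\gamma_n)$. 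Letting $m\to\infty$, bounded convergence (with $A$, $B$, $\lambda A+\mu B$ closed, so the Gaussian measures of their shrinking neighbourhoods converge down to theirs) gives $\int u_1\,d\gamma_n\to|A|_\gamma$, $\int u_2\,d\gamma_n\to|B|_\gamma$, $\int u_3\,d\gamma_n\to|\lambda A+\mu B|_\gamma$, all limits in $(0,1)$, and by continuity of $H$ there,
\[
|\lambda A+\mu B|_\gamma\ \ge\ \Phi\big(\lambda\,\Phi^{-1}(|A|_\gamma)+\mu\,\Phi^{-1}(|B|_\gamma)\big),
\]
which is Ehrhard's inequality. The general rank $n>1$ is covered directly by Corollary~\ref{corgla}; alternatively one proves $n=1$ and iterates as in Section~\ref{grank}.

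\emph{Main obstacle.} The PDE/Bellman part is essentially mechanical once Corollary~\ref{corgla} is in hand; the real difficulty is entirely in Step~2. One \emph{cannot} simply substitute sharp indicators $u_j=\mathbf{1}_{(\cdot)}$, because $H$ has no continuous extension to the corner $(1,0)$ of $[0,1]^2$ --- the limit of $\lambda\Phi^{-1}(s)+\mu\Phi^{-1}(t)$ as $(s,t)\to(1,0)$ is $+\infty$ or $-\infty$ according to the approach path, since $\lambda+\mu>1$ is allowed --- so the ``mixed'' configurations ($x$ in the first set, $y$ outside the second, say) would produce the indeterminate $\lambda(+\infty)+\mu(-\infty)$. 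The regularization above sidesteps this by enlarging the supports of $u_1,u_2$ to $\delta_m$-neighbourhoods while enlarging $\lambda A+\mu B$ by the \emph{matched} amount $\epsilon_m=(\lambda+\mu)\delta_m$, so that on exactly the set of $(x,y)$ where the right side is finite the point $\lambda x+\mu y$ already lies in the region where $u_3$ is maximal. Verifying that this matched-neighbourhood domination is genuinely valid pointwise, that the three integrals converge to the correct Gaussian measures, and that Corollary~\ref{corgla} (hence \eqref{infinity0}) applies verbatim to the approximants, is the substance of the argument.
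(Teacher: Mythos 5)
Your proposal is correct and follows essentially the paper's own route: the functional Ehrhard inequality is obtained from the second-type Bellman PDE (Corollary~\ref{corgla}, i.e.\ conditions A1--A3 of Section~\ref{Er}) with $H(s,t)=\Phi\big(\lambda\Phi^{-1}(s)+\mu\Phi^{-1}(t)\big)$, for which \eqref{prepre} vanishes identically and the existence of $C\ge0$ is exactly \eqref{bartha}, and the set version is then deduced from the functional one. The paper only asserts this last reduction (``Ehrhard's inequality is a consequence of its functional version''), whereas your Step~2 carries out the matched-neighbourhood approximation explicitly; this fills in a detail the paper omits rather than constituting a different method.
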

The inequality initially was stated for convex sets $A$ and $B$. Later it was improved in the sense that only one of them has to be convex and it was conjectured that the inequality is true in general for any measurable sets, and the conjecture was proved recently. 
Ehrhard's inequality is consequence of its functional version:
\begin{thmm}
Let $h,f,g : \mathbb{R}^{n} \to [0,1]$ be functions such that 
\begin{align*}
\Phi^{-1}(h(\lambda x+\mu y))\geq \lambda \Phi^{-1}(f(x))+\mu\Phi^{-1}(g(y)), \quad \text{for all} \quad x,y \in \mathbb{R}^{n},
\end{align*}
where $\lambda, \mu \geq 0$,  $\lambda+\mu\geq 1$ and $|\lambda-\mu|\leq 1$ then 
\begin{align*}
\Phi^{-1}\left( \int_{\mathbb{R}^{n}}hd\gamma\right)\geq \lambda \Phi^{-1}\left(\int_{\mathbb{R}^{n}}fd\gamma\right)+\mu\Phi^{-1}\left(\int_{\mathbb{R}^{n}}gd\gamma\right).
\end{align*}
\end{thmm}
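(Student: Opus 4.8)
The plan is to read off this functional Ehrhard inequality from Corollary~\ref{preapp} (in the form of Corollary~\ref{corgla}), applied to the transport function $H(x,y)=\Phi\!\left(\lambda\Phi^{-1}(x)+\mu\Phi^{-1}(y)\right)$ with $a_n=(\lambda,\mu)$ and the Gaussian weight $\varphi(t)=\frac{1}{\sqrt{2\pi}}e^{-t^{2}/2}$, so that the whole argument reduces to (a) verifying the nonlinear inequality \eqref{onH}/\eqref{glavnoe} for this $H$, (b) checking the condition at infinity, and (c) an approximation to pass from the special test functions to arbitrary measurable $h,f,g:\mathbb{R}^{n}\to[0,1]$. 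First I would observe that with $B(v_1,v_2,v_3)=v_3-H(v_1,v_2)$, $a_1=e_1$, $a_2=e_2$, $a_3=(\lambda,\mu)$, the associated function $V(x,t)=B\big((P^{C}_t u_1)(x_1),(P^{C}_t u_2)(x_2),(P^{C}_t u_3)(\lambda x_1+\mu x_2)\big)$ is precisely the object to which the hill property and the minimum principle of Theorem~\ref{MMC_hill} are to be applied, and that the hypothesis $\Phi^{-1}(h(\lambda x+\mu y))\ge\lambda\Phi^{-1}(f(x))+\mu\Phi^{-1}(g(y))$ is literally $V(\cdot,0)\ge0$, while the conclusion is $V(0,1/2)\ge0$.

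Next, the choice of $C$. I would take the $2\times2$ symmetric matrix with $c_{11}=c_{22}=1$ and $c_{12}=\frac{1-\lambda^{2}-\mu^{2}}{2\lambda\mu}$, so that $\langle Ca_j,a_j\rangle=1$ for $j=1,2,3$; a direct $2\times2$ computation shows $C\ge0$ if and only if $|1-\lambda^{2}-\mu^{2}|\le2\lambda\mu$, which for $\lambda,\mu\ge0$ is exactly $\lambda+\mu\ge1$ and $|\lambda-\mu|\le1$ (for the general-rank/arbitrary-$n$ version one invokes Lemma~3 of \cite{barthe1} instead). With this $C$, the computation \eqref{prepre} of Section~\ref{Er}, specialized to $b=(\lambda,\mu)$ and $\varphi$ the Gaussian weight, shows that the left side of \eqref{onH} equals, up to a strictly positive factor, $(\log\varphi)'(\lambda y_1+\mu y_2)-\lambda(\log\varphi)'(y_1)-\mu(\log\varphi)'(y_2)$; since $(\log\varphi)'(t)=-t$ this vanishes identically, so \eqref{onH} holds with equality and, by Corollary~\ref{preapp}, $V$ has the hill property. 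The condition at infinity \eqref{infinity0} is handled verbatim as in Example~1: if $|x|\to\infty$ then one coordinate tends to $\infty$ along a subsequence, forcing the corresponding $\Phi^{-1}\!\left(u_j/(1+|x_j|)\right)\to-\infty$, whence $\Phi$ of a ``$-\infty+\text{bounded}$'' argument is $0$ and the required $\liminf$ is $\ge0$ — provided the test functions have ranges bounded away from $1$, i.e. $\Phi^{-1}(|u_j|_{\infty})<\infty$, which we may assume after a range truncation.

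By Theorem~\ref{MMC_hill}(ii) (equivalently Corollary~\ref{preapp}) this yields the inequality for smooth compactly supported $f=u_1$, $g=u_2$ and smooth bounded $h=u_3$ with $\Phi^{-1}(|u_j|_{\infty})<\infty$: from $V(\cdot,0)\ge0$ we obtain $V(\cdot,t)\ge0$ for all $t$, and at $x=0$, $t=1/2$ this reads $\int u_3\,d\gamma_1\ge H\big(\int u_1\,d\gamma_1,\int u_2\,d\gamma_1\big)$, i.e. the asserted inequality in dimension $n=1$; the $n$-dimensional case then follows by iterating the one-dimensional statement over the $n$ coordinates, as at the end of Section~\ref{Er}, or directly from the general-rank machinery of Section~\ref{grank}. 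Finally I would remove the restriction to special test functions by approximation: replace the ranges of $h,f,g$ by $[\varepsilon,1-\varepsilon]$, mollify $f$ and $g$ with a fixed compactly supported kernel $\rho$ and $h$ with the $(\lambda,\mu)$-weighted Minkowski convolution $\lambda\!\cdot\!\rho\oplus\mu\!\cdot\!\rho$, noting that convexity of $\Phi^{-1}$ preserves the pointwise domination for the mollified functions, and then let the mollification parameter and $\varepsilon$ tend to $0$ using dominated convergence against the finite measure $\gamma_n$.

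The step I expect to be the main obstacle is this last approximation: the constraint $\Phi^{-1}(h_\varepsilon(\lambda x+\mu y))\ge\lambda\Phi^{-1}(f_\varepsilon(x))+\mu\Phi^{-1}(g_\varepsilon(y))$ must be genuinely preserved through mollification and range-truncation, which is delicate because $\Phi^{-1}$ is unbounded at both endpoints of $[0,1]$, so the three mollifying kernels and the truncation levels have to be coupled carefully before all parameters are sent to their limits. By contrast, the genuinely new ``Bellman/PDE content'' — that $H$ satisfies \eqref{onH} for the admissible $C\ge0$, hence $V$ has the hill property and obeys the minimum principle of Theorem~\ref{MMC_hill} — is the short computation already recorded in \eqref{prepre}.
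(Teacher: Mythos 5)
Your proposal follows essentially the same route as the paper: Section~\ref{Er} proves exactly this statement by taking $H(x,y)=\Phi(\lambda\Phi^{-1}(x)+\mu\Phi^{-1}(y))$, choosing $C\ge 0$ with $c_{jj}=1$ and $\langle Cb,b\rangle=1$ (Conditions A1--A3, with A2 holding with equality since $(\log\varphi)'(t)=-t$), invoking Corollary~\ref{preapp}, and iterating the one-dimensional case to get dimension $n$. The only point where you go beyond the paper is the final mollification/truncation step passing from smooth compactly supported test functions to arbitrary measurable $h,f,g$, which the paper leaves implicit; your sketch of it is reasonable and correctly identifies where the care is needed.
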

Note that we proved Ehrhard's inequality in Section~\ref{Er}, and the reason the inequality holds was because the function 
\begin{align*}
H(x,y)=\Phi\left( \alpha \Phi^{-1}(x)+\beta \Phi^{-1}(y)\right)
\end{align*}
(where $\Phi(x)=\int_{-\infty}^{x}d\gamma(x)$) satisfies partial differential inequality (\ref{glavnoe}).
\bigskip 
Ehrhard's inequality implies Gaussian isoperimetry, which in turn follows from its integral version:

\begin{corr}
Let $A$ be a Borel set in $\mathbb{R}^{n}$ and let $H$ be an affine halfspace such that $\gamma_{n}(A)=\gamma_{n}(H)=\Phi(a)$ for some $a \in \mathbb{R}$. Then 
\begin{align}\label{giso}
\gamma_{n}(A_{t}) \geq \gamma_{n}(H_{t})=\Phi(a+t)\quad \forall t\geq 0.
\end{align}
where $A_{t} = A+B(t)$, and $B(t)$ is a ball of radius $t$ centered at the origin. 
\end{corr}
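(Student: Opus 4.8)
The plan is to deduce the corollary from the set form of Ehrhard's inequality recorded just above, taking one of the two sets to be a large Euclidean ball and then passing to a rescaling limit. \textbf{The half-space.} By rotation invariance of $\gamma_n$ we may assume $H=\{x:\langle x,e_1\rangle\le a\}$, so that $\gamma_n(H)=\Phi(a)$ holds as required; since adding the ball $B(t)$ pushes $H$ outward by exactly $t$ in the direction $e_1$, one checks directly that $H+B(t)=\{x:\langle x,e_1\rangle\le a+t\}$, hence $\gamma_n(H_t)=\Phi(a+t)$. This settles the equality in \eqref{giso}, and it remains to prove $\gamma_n(A_t)\ge\Phi(a+t)$ for an arbitrary Borel set $A$ with $\gamma_n(A)=\Phi(a)$. \textbf{Reduction to compact $A$.} Since $\gamma_n$ is Radon, $\gamma_n(A)=\sup\{\gamma_n(K):K\subseteq A,\ K\ \text{compact}\}$; for each such $K$ we have $K_t\subseteq A_t$, so it is enough to establish $\Phi^{-1}(\gamma_n(K_t))\ge\Phi^{-1}(\gamma_n(K))+t$ for compact $K$ and then let $\gamma_n(K)\uparrow\gamma_n(A)$, using that $\Phi^{-1}$ is continuous and increasing on $(0,1)$ (note $a\in\mathbb{R}$ forces $\gamma_n(A)\in(0,1)$).

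\textbf{Ehrhard against a ball.} Fix $t>0$ and a compact $A$ with $\gamma_n(A)=\Phi(a)$. For each $\mu\in(0,2]$ apply Ehrhard's inequality with $\lambda=1$ to the compact sets $A$ and the closed ball $B(t/\mu)$; the constraints $\lambda+\mu\ge1$ and $|\lambda-\mu|\le1$ are satisfied. Since $\mu\,B(t/\mu)=B(t)$, the Minkowski combination $\lambda A+\mu B(t/\mu)=A+B(t)=A_t$ is in fact independent of $\mu$, so Ehrhard yields
\begin{equation*}
\Phi^{-1}\bigl(\gamma_n(A_t)\bigr)\ \ge\ \Phi^{-1}\bigl(\gamma_n(A)\bigr)+\mu\,\Phi^{-1}\bigl(\gamma_n(B(t/\mu))\bigr)\ =\ a+\mu\,\Phi^{-1}\bigl(\gamma_n(B(t/\mu))\bigr).
\end{equation*}

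\textbf{The rescaling limit (the main obstacle).} It now suffices to let $\mu\to0^+$ in the last display, for which we need $\lim_{\mu\to0^+}\mu\,\Phi^{-1}\bigl(\gamma_n(B(t/\mu))\bigr)=t$. Writing $R=t/\mu\to\infty$, this limit equals $t\cdot\lim_{R\to\infty}\Phi^{-1}(\gamma_n(B(R)))/R$, so the whole matter reduces to the asymptotic $\Phi^{-1}(\gamma_n(B(R)))=R\,(1+o(1))$ as $R\to\infty$ --- the one genuinely analytic point, and the step I expect to require the most care. It is nevertheless elementary: the $\chi^2_n$ tail gives $\gamma_n(\{|x|>R\})=\exp\bigl(-R^2/2+O(\log R)\bigr)$, while the univariate Gaussian tail gives $1-\Phi(s)=\exp\bigl(-s^2/2+O(\log s)\bigr)$; equating these two quantities (with $s=\Phi^{-1}(\gamma_n(B(R)))$) as $R\to\infty$ forces $s^2=R^2\bigl(1+O(\log R/R^2)\bigr)$, i.e. $s=R(1+o(1))$. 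Plugging this back into the displayed inequality gives $\Phi^{-1}(\gamma_n(A_t))\ge a+t$, that is $\gamma_n(A_t)\ge\Phi(a+t)=\gamma_n(H_t)$, which is the assertion.
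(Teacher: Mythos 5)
Your proof is correct and follows essentially the same route as the paper: both apply Ehrhard's inequality to $A$ against a large ball whose Minkowski coefficient shrinks so that the combination stays equal to $A_t$, and both reduce the conclusion to the asymptotic $\Phi^{-1}(\gamma_n(B(R)))/R\to 1$, verified from the Gaussian and $\chi^2_n$ tail estimates. The only (minor, and arguably cleaner) difference is that you take $\lambda=1$, $\mu\to 0^+$ in the extended Ehrhard constraints, which leaves $A$ unscaled, whereas the paper uses the convex-combination form $\lambda\to 1^-$ and must tacitly pass to the limit in $\Phi^{-1}(|\lambda^{-1}A|_\gamma)$ as well; your explicit reduction to compact $A$ is also a detail the paper omits.
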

Proof follows using Ehrhard's inequality (\ref{ehh}):
\begin{align*}
&|A_{t}|_{\gamma}=\left|\lambda [\lambda^{-1}A]+(1-\lambda)[(1-\lambda)^{-1}tB]\right|_{\gamma}\geq \Phi\left(\lambda \Phi^{-1}(|\lambda^{-1}A|_{\gamma})+(1-\lambda) \Phi^{-1}(|(1-\lambda)^{-1}tB|_{\gamma})\right).
\end{align*}
If we send $\lambda \to 1^{-}$ then $(1-\lambda)^{-1}\Phi^{-1}(|(1-\lambda)^{-1}tB|_{\gamma})\to t$. Indeed, we need to show that  $\lim_{r\to \infty}\frac{1}{r}\Phi^{-1}(|B(r)|_{\gamma})=1$. This follows from the following asymptotic behavior of Gaussian distributions 
\begin{align*}
|B(r)|_{\gamma_{n}} = 1-\frac{|\sigma_{n}|}{(2\pi)^{n/2}}\int_{r}^{\infty}e^{-\frac{r^{2}}{2}}r^{n-1}dr= 1-r^{n-2}e^{-r^{2}/2}\frac{|\sigma_{n}|}{(2\pi)^{n/2}}+o\left( r^{n-2}e^{-r^{2}/2}\right).
\end{align*}
 where $|\sigma_{n}|$ is the measure of the unit sphere in $\mathbb{R}^{n}$, and 
 \begin{align*}
 \Phi(t)=1-\frac{1}{\sqrt{2\pi}}\int_{t}^{\infty}e^{-x^{2}/2}dx=1-\frac{1}{\sqrt{2\pi}}\frac{e^{-t^{2}/2}}{t}+o\left(\frac{e^{-t^{2}/2}}{t}\right).
 \end{align*}
 Thus 
 \begin{align*}
 \Phi^{-1}(s)=(-2\ln(1-s))^{1/2}+o\left((-\ln(1-s))^{1/2} \right)\quad \text{for} \quad r \to 1^{-},
 \end{align*}
 and hence 
 \begin{align*}
\lim_{r \to \infty} \frac{1}{r}\Phi^{-1}(|B(r)|_{\gamma})=\lim_{r \to \infty} \frac{1}{r} \left[-2 \ln (r^{n-2}e^{-r^{2}/2}) \right]^{1/2}=1. 
 \end{align*}

\bigskip
So we obtain the desired result
\begin{align*}
 |A_{t}|_{\gamma}\geq \Phi(\Phi^{-1}(|A|_{\gamma})+t).
\end{align*}

\bigskip 

Infinitisimal version of (\ref{giso}) gives Gaussian isoperimetry 
\begin{cor}
\begin{align*}
|\partial A|_{\gamma} := \lim_{t\to 0} \frac{|A_{t}|_{\gamma}-|A|_{\gamma}}{t} \geq \Phi' (\Phi^{-1}(|A|_{\gamma})).
\end{align*}
\end{cor}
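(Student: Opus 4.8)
The plan is to derive this infinitesimal statement directly from the sharp integral inequality \eqref{giso} just established. Recall that \eqref{giso} asserts $\gamma_n(A_t)\ge \Phi(a+t)$ for all $t\ge 0$, where $a:=\Phi^{-1}(|A|_\gamma)$, and that it was itself obtained from Ehrhard's inequality \eqref{ehh}. Since $B(0)=\{0\}$ we have $A_0=A+B(0)=A$, so $|A|_\gamma=\Phi(a)$, and \eqref{giso} can be rewritten as $|A_t|_\gamma-|A|_\gamma\ge \Phi(a+t)-\Phi(a)$ for every $t\ge0$.

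Dividing this inequality by $t>0$ gives
\begin{align*}
\frac{|A_t|_\gamma-|A|_\gamma}{t}\ \ge\ \frac{\Phi(a+t)-\Phi(a)}{t}\,.
\end{align*}
Now I would let $t\to 0^+$. The left-hand side tends to $|\partial A|_\gamma$ (if one does not wish to assume a priori that this limit exists, one replaces $\lim$ by $\liminf$ throughout, which is all that is needed for a lower bound), while the right-hand side converges to $\Phi'(a)$ by differentiability of $\Phi$. Hence $|\partial A|_\gamma\ge \Phi'(a)=\Phi'(\Phi^{-1}(|A|_\gamma))$, which is the claim; in the notation $\varphi=\Phi'$ used earlier, the bound reads $|\partial A|_\gamma\ge\varphi(\Phi^{-1}(|A|_\gamma))$, i.e. the Gaussian perimeter of $A$ is at least that of the half-space of the same Gaussian measure.

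I do not expect a real obstacle here: all the analytic content was already packed into Ehrhard's inequality and its set version \eqref{giso}, and the present step is merely differentiating that sharp integral inequality at $t=0$. The only minor points to keep in mind are that the perimeter is taken through the one-sided limit $t\to0^+$ (consistent with the expansion $|A+B(0,\eps)|=|A|+\eps|\partial A|+O(\eps^2)$ used in the Euclidean isoperimetric discussion above), and that equality is attained when $A$ is itself an affine half-space, since then $A_t$ is the parallel half-space $H_t$ and \eqref{giso} holds with equality; this shows the constant $\Phi'(\Phi^{-1}(\cdot))$ is optimal.
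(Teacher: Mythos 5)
Your proof is correct and is exactly the argument the paper intends when it calls the corollary the ``infinitesimal version'' of \eqref{giso}: subtract $|A|_\gamma=\Phi(a)$ from $|A_t|_\gamma\ge\Phi(a+t)$, divide by $t$, and let $t\to0^+$. The remarks about using $\liminf$ if the limit is not assumed to exist, and about equality for half-spaces, are sensible additions but do not change the substance.
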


\subsection*{Borell's Gaussian noise ``stability''}
Let $\gamma_{n} = \frac{e^{-|x|^{2}/2}}{(2\pi)^{n/2}}$ be a standard Gaussian measure on $\mathbb{R}^{n}$  and let $\Phi=\int_{-\infty}^{x} d\gamma_{1}$. 
Borell's Gaussian noise ``stability'' (see also~\cite{Mossel3, Mossel1}) states that 
\begin{thmm}
If $A, B$ are measurable subsets of $\mathbb{R}^{n}$. Then if $X=(X_{1},\ldots, X_{n}), Y=(Y_{1},\ldots, Y_{n})$ are independent Gaussian standard random variables, and $p \in (0,1)$ then 
\begin{align*}
\mathbb{P}(X \in A,\; pX+\sqrt{1-p^{2}}\,Y \in B) \leq \mathbb{P} (X_{1} \leq \Phi^{-1}(\gamma_{n}(A)), \;pX_{1}+\sqrt{1-p^{2}}\,Y_{1} \leq \Phi^{-1}(\gamma_{n}(B))).
\end{align*}
\end{thmm}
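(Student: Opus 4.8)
The plan is to obtain the $n$-dimensional statement by tensorizing the one‑dimensional analysis of Section~\ref{BGNS}, since the genuine content there — a function $\B$ on $Q=[0,1]^2$ solving the first‑type Bellman system \eqref{Bryant} with the boundary data \eqref{bd}, whose explicit form is known because the extremal sets are half‑lines — is already available. First I would set $f=\mathbf 1_A$, $g=\mathbf 1_B$ and note that, because $\B$ vanishes on $\{u=0\}\cup\{v=0\}$ and equals the respective coordinate on $\{u=1\}$ and $\{v=1\}$ (this is \eqref{bd}), one has $\B(s,r)=sr$ whenever $s,r\in\{0,1\}$; hence
\[
\mathbb{P}\big(X\in A,\; pX+\sqrt{1-p^2}\,Y\in B\big)=\int_{\R^{2n}}\B\big(f(x),g(px+\sqrt{1-p^2}\,y)\big)\,d\gamma_{2n}(x,y)\,.
\]
So it suffices to prove the Jensen‑type inequality
\[
\int_{\R^{2n}}\B\big(f(x),g(px+\sqrt{1-p^2}\,y)\big)\,d\gamma_{2n}\ \le\ \B\Big(\int_{\R^n}f\,d\gamma_n,\ \int_{\R^n}g\,d\gamma_n\Big),
\]
after which the right‑hand side is identified with $\mathbb{P}\big(X_1<\Phi^{-1}(\gamma_n(A)),\ pX_1+\sqrt{1-p^2}\,Y_1<\Phi^{-1}(\gamma_n(B))\big)$ by the explicit formula $\B(u,v)=\mathcal P\big(X<\Phi^{-1}(u),\,pX+\sqrt{1-p^2}\,Y<\Phi^{-1}(v)\big)$ established in Section~\ref{BGNS}.

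Next I would invoke the general‑rank first‑type result, Theorem~\ref{gpde1}, exactly as in the ``Applications tensorize'' discussion: take $k=2n$, $k_1=k_2=n$, $A_1=(I_n,0_n)^{T}$, $A_2=(pI_n,\sqrt{1-p^2}\,I_n)^{T}$ and $C=I_{2n}$, so that $A_1^*A_1=A_2^*A_2=I_n$ and $A_1^*A_2=pI_n$. Then $A^*CA\bullet\Hess\B=\begin{bmatrix}\B_{uu}&p\B_{uv}\\ p\B_{uv}&\B_{vv}\end{bmatrix}\otimes I_n$, whose nonpositivity is equivalent to \eqref{Bryant}. Since Section~\ref{BGNS} shows $\B$ satisfies \eqref{Bryant} (indeed $\B_{uu},\B_{vv}\le 0$ and $\B_{uu}\B_{vv}-p^2\B_{uv}^2=0$), condition (i) of Theorem~\ref{gpde1} holds, and condition (iii) with $t=1/2$, $x=0$ is precisely the displayed integral inequality — at first for smooth, compactly supported, $[0,1]$‑valued $f,g$.

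Finally I would carry out the routine approximation passing from such test functions to indicators of arbitrary measurable sets: mollify $\mathbf 1_A,\mathbf 1_B$ and multiply by cutoffs to produce smooth, compactly supported, $[0,1]$‑valued $f_j,g_j$ with $\int f_j\,d\gamma_n\to\gamma_n(A)$, $\int g_j\,d\gamma_n\to\gamma_n(B)$ and, along a subsequence, $f_j\to\mathbf 1_A$, $g_j\to\mathbf 1_B$ a.e.; since $\B$ is continuous and bounded on the closed square $[0,1]^2$, dominated convergence on the left and continuity of $\B$ on the right pass the inequality to the limit, and combining with the identity of the first paragraph completes the proof.

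As for the main obstacle: essentially all the difficulty — constructing $\B$, verifying \eqref{Bryant} and \eqref{Bryant_p_det}, and knowing the extremal problem is solved by half‑lines so that $\B$ has the stated explicit formula — has been dispatched earlier, so what remains is bookkeeping. The one point needing genuine care is the passage from the smooth compactly supported test functions demanded by Theorem~\ref{gpde1} to indicators of general measurable sets, together with the identification $\B(f,g)=fg$ on $0/1$ data via \eqref{bd}; the approximation must be arranged so that both Gaussian integrals converge to $\gamma_n(A),\gamma_n(B)$ while the composition with $\B$ stays controlled, which is exactly where the continuity of $\B$ on the closed square and dominated convergence do the work.
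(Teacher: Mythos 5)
Your proposal is correct and follows essentially the same route as the paper: the $n=1$ analysis of Section~\ref{BGNS} supplies the explicit $\B$ satisfying \eqref{Bryant} and \eqref{bd}, and the passage to $\mathbb{R}^n$ is exactly the tensorization via Theorem~\ref{gpde1} with $A_1=(I_n,0_n)^T$, $A_2=(pI_n,\sqrt{1-p^2}\,I_n)^T$ carried out in the paper's ``Applications tensorizes'' subsection. Your added care about approximating indicators by smooth compactly supported test functions (and the correct size $C=I_{2n}$) only tightens details the paper leaves implicit.
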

The functional version of the above inequality can be stated as follows: 
\begin{thmm}
Let $p \in (0,1)$,  $f,g : \mathbb{R}^{n} \to (0,1)$ and let 
\begin{align*}
B(u,v)=\mathbb{P}(X_{1} \leq \Phi^{-1}(u),\; pX_{1}+\sqrt{1-p^{2}}\,Y_{2} \leq  \Phi^{-1}(v)).
\end{align*}
  Then 
\begin{align*}
\int_{\mathbb{R}^{2n}}B\left(f(x), g(px+\sqrt{1-p^{2}}\, y) \right) d\gamma d\gamma \leq B\left(\int_{\mathbb{R}^{n}}f d\gamma, \int_{\mathbb{R}^{n}}g d\gamma \right).
\end{align*}
\end{thmm}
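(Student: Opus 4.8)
First I observe that the function $B$ in the statement is nothing but the function $\B$ constructed and studied in Section~\ref{BGNS}: since $X_{1}$ and the noise variable are independent standard Gaussians, $B(u,v)=\mathcal{P}(X<\Phi^{-1}(u),\,pX+\sqrt{1-p^{2}}\,Y<\Phi^{-1}(v))$. There the second derivatives of $\B$ were computed explicitly, and it was verified that on the square $Q=(0,1)^{2}$ one has $\B_{uu}\le 0$, $\B_{vv}\le 0$ and $\B_{uu}\B_{vv}-p^{2}\B_{uv}^{2}=0$. In particular the symmetric $2\times2$ matrix of \eqref{Bryant} has nonpositive trace and vanishing determinant, hence both its eigenvalues are $\le 0$; that is, $\B$ satisfies the pointwise matrix inequality \eqref{Bryant}.

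Next I set up the tensorization exactly as in the subsection ``Applications tensorizes'' of Section~\ref{grank}. I work in the ambient space $\R^{2n}$, write a point as $(x,y)$ with $x,y\in\R^{n}$, and take $A_{1}=(I_{n},0_{n})^{T}$, $A_{2}=(pI_{n},\sqrt{1-p^{2}}\,I_{n})^{T}$ (each of size $2n\times n$), $A=(A_{1},A_{2})$, and $C=I_{2n}$. Then $A_{1}^{*}CA_{1}=A_{2}^{*}CA_{2}=I_{n}$ and $A_{1}^{*}CA_{2}=A_{2}^{*}CA_{1}=pI_{n}$, so the block matrix $A^{*}CA\bullet\Hess B$ is exactly the matrix of \eqref{Bryant} tensored with $I_{n}$; by the previous paragraph this is $\le 0$, so hypothesis (i) of Theorem~\ref{gpde1} holds for $B$ with these data $A,C$.

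Now I invoke the implication (i)$\Rightarrow$(iii) of Theorem~\ref{gpde1} with $u_{1}=f$, $u_{2}=g$. Since $(x,y)A_{1}=x$ and $(x,y)A_{2}=px+\sqrt{1-p^{2}}\,y$, the test vector is $\vec u(x,y)=(f(x),\,g(px+\sqrt{1-p^{2}}\,y))$. Because $C=I_{2n}$, at $t=1/2$ and at the origin the left side of (iii) equals $\int_{\R^{2n}}B(f(x),g(px+\sqrt{1-p^{2}}\,y))\,d\gamma_{n}(x)\,d\gamma_{n}(y)$; and because $A_{j}^{*}CA_{j}=I_{n}$, the flow formula of Section~\ref{grank} gives $(P^{C}_{1/2}u_{j}(\cdot A_{j}))(0)=\int_{\R^{n}}u_{j}\,d\gamma_{n}$, so the right side of (iii) equals $B\bigl(\int f\,d\gamma_{n},\int g\,d\gamma_{n}\bigr)$. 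Hence (iii) is precisely the asserted inequality.

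The only delicate point, and the one I expect to be the main obstacle, is that Theorem~\ref{gpde1} is stated for smooth compactly supported $u_{j}$ while here $f,g$ are merely measurable with values in $(0,1)$, so a density step is needed. I would first truncate and mollify $f,g$ to obtain smooth, compactly supported functions valued in $(0,1)$ converging to $f,g$ almost everywhere, apply the inequality just proved to these, and pass to the limit on both sides by dominated convergence: all integrands are bounded by $\sup_{[0,1]^{2}}\B$, and $\B$ is continuous up to the boundary of $Q$ by \eqref{bd}, so both sides converge correctly. Alternatively one can avoid truncation by running the Ornstein--Uhlenbeck version of the flow (the general-rank analogue of the last remark of Section~\ref{proof_gMC}) and letting $t\to\infty$, where finiteness of $\gamma_{n}$ makes the passage to the limit immediate. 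Modulo this routine approximation, the theorem is an immediate consequence of \eqref{Bryant} and Theorem~\ref{gpde1}.
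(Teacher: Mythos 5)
Your proof is correct and follows essentially the same route as the paper: the explicit computation of $\B_{uu},\B_{vv},\B_{uv}$ in Section~\ref{BGNS} establishes \eqref{Bryant}, and the multidimensional statement is then exactly the tensorization carried out in the ``Applications tensorizes'' subsection of Section~\ref{grank} via Theorem~\ref{gpde1} with $A_{1}=(I_{n},0_{n})^{T}$, $A_{2}=(pI_{n},\sqrt{1-p^{2}}\,I_{n})^{T}$. Your extra care with the approximation of merely measurable $f,g$ by smooth compactly supported test functions is a point the paper leaves implicit, and your handling of it is adequate.
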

\subsection*{Hypercontractivity}
Let 
\begin{align*}
P_{t}f(x) = \int_{\mathbb{R}^{n}}f(e^{-t}x+\sqrt{1-e^{-2t}}\, y)d\gamma(y)
\end{align*}
 be Ornstein--Uhlenbeck semigroup where  $t\geq 0$. The hypercontractivity for Ornstein--Uhlenbeck semigroup means that 
\begin{thmm}
 Let $p, q >1$ be such that $\frac{q-1}{p-1}\geq  e^{-2t}$. Then 
 \begin{align*}
 \|P_{t}f\|_{L^{p}(d\gamma)}\leq \|f\|_{L^{q}(d\gamma)}.
 \end{align*}
\end{thmm}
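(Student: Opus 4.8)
The plan is to derive this sharp hypercontractivity bound directly from inequality \eqref{hyperineq}, which was already obtained in Section~\ref{hyper} (and, in the form valid on $\mathbb{R}^{n}$, follows from Theorem~\ref{gpde1} via the tensorization discussed in Section~\ref{grank}) using the elementary Bellman function $B(u,v)=u^{1/a}v^{1/b}$. First I would recall the one genuine computation behind everything: for $\rho:=e^{-t}$ and $B(u,v)=u^{1/a}v^{1/b}$ the first-type Bellman system \eqref{Bryant}---equivalently the saturation \eqref{Bryant_p_det} together with $B_{uu},B_{vv}\le 0$, which by the Remark after \eqref{Bryant_p_det} suffices---holds precisely when $a\ge 1$, $b\ge 1$ and $(a-1)(b-1)\ge \rho^{2}=e^{-2t}$. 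Feeding this $B$ into Theorem~\ref{gMC} (and its general-rank counterpart Theorem~\ref{gpde1}, with $A_{1}=(I_{n},0)^{T}$, $A_{2}=(\rho I_{n},\sqrt{1-\rho^{2}}I_{n})^{T}$, $C=I_{n}$) yields, for nonnegative $F,G$,
\[
\int_{\mathbb{R}^{n}}F\cdot P_{t}G\, d\gamma=\int_{\mathbb{R}^{2n}}F(x)\,G(e^{-t}x+\sqrt{1-e^{-2t}}\,y)\, d\gamma(x)d\gamma(y)\le\Big(\int F^{a}d\gamma\Big)^{1/a}\Big(\int G^{b}d\gamma\Big)^{1/b},
\]
which is exactly \eqref{hyperineq}.

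Next I would make the right choice of exponents. Put $a:=\dfrac{p}{p-1}$, the conjugate index of $p$, and $b:=q$. Since $p>1$ and $q>1$ one has $a\ge 1$ and $b\ge 1$, while the admissibility condition $(a-1)(b-1)\ge e^{-2t}$ reads $\dfrac{1}{p-1}(q-1)\ge e^{-2t}$, i.e.\ $\dfrac{q-1}{p-1}\ge e^{-2t}$, which is precisely the hypothesis of the theorem. With these $a,b$, fix a nonnegative $G$ and take the supremum of the left-hand side over all nonnegative $F$ with $\int F^{a}d\gamma\le 1$: since $P_{t}G\ge 0$ and $d\gamma$ is a (finite) probability measure, $L^{a}$--$L^{a'}$ duality gives $\sup_{F\ge 0,\ \|F\|_{L^{a}(d\gamma)}\le 1}\int F\cdot P_{t}G\, d\gamma=\|P_{t}G\|_{L^{a'}(d\gamma)}=\|P_{t}G\|_{L^{p}(d\gamma)}$, because $a'=p$. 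Hence $\|P_{t}G\|_{L^{p}(d\gamma)}\le\|G\|_{L^{q}(d\gamma)}$ for all nonnegative $G$.

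It remains to drop the positivity restriction and to enlarge the class of admissible functions. Since the Ornstein--Uhlenbeck kernel is strictly positive, $|P_{t}f|\le P_{t}|f|$ pointwise, so $\|P_{t}f\|_{L^{p}(d\gamma)}\le\|P_{t}|f|\|_{L^{p}(d\gamma)}\le\||f|\|_{L^{q}(d\gamma)}=\|f\|_{L^{q}(d\gamma)}$ for every $f\in L^{q}(d\gamma)$; the passage from the smooth bounded (or compactly supported) functions for which Theorems~\ref{gMC} and~\ref{gpde1} are stated to arbitrary $f\in L^{q}(d\gamma)$ is a routine density and monotone-convergence argument. The step I would regard as the real content---everything else being duality and approximation---is the verification that the naive power Bellman function $B(u,v)=u^{1/a}v^{1/b}$ actually solves \eqref{MC} (that is, \eqref{Bryant}) under exactly the stated constraints, together with the bookkeeping that, after the substitution $a=p/(p-1)$, $b=q$, turns $(a-1)(b-1)\ge e^{-2t}$ into the sharp threshold $\frac{q-1}{p-1}\ge e^{-2t}$.
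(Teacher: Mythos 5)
Your proposal is correct and follows essentially the same route as the paper: verify that $B(u,v)=u^{1/a}v^{1/b}$ satisfies \eqref{Bryant} exactly when $a,b\ge 1$ and $(a-1)(b-1)\ge e^{-2t}$, feed it into Theorem~\ref{gMC} (tensorized via Theorem~\ref{gpde1} for $\mathbb{R}^{n}$) to get \eqref{hyperineq}, and then dualize in $L^{a}$ with $a=p/(p-1)$, $b=q$, which is precisely the paper's substitution $Q=a/(a-1)$, $P=b$. The only additions you make—reducing general $f$ to $|f|$ and the density argument—are routine and consistent with the paper's intent.
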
 

For possible references we refer the reader to \cite{Gross1, Ledoux2014, Paouris1}. 
For the proofs we refer the reader to Section~\ref{BGNS} (for the case $n=1$) and to Section~\ref{gsec1} for  arbitrary $n\geq 1$.

\end{document}